\documentclass[final]{siamart190516}

\usepackage[T1]{fontenc}
\usepackage{amssymb}
\usepackage{amsmath}
\usepackage{mathrsfs}
\textheight=21.0cm
\textwidth=14.8cm

\evensidemargin 28pt
\oddsidemargin=\evensidemargin

% messages: 
% k_eC_e-k_iC_i compact on H_div rather than H_t
 
\setcounter{tocdepth}{1}
\newtheorem{remark}[theorem]{Remark}
\newtheorem{notation}[theorem]{Notation}

\numberwithin{equation}{section} 

\newcommand{\grad}{\operatorname{\mathbf{grad}}}
\newcommand{\Rot}{\operatorname{\mathbf{curl}}}
\newcommand{\Div}{\operatorname{\mathrm{div}}}
\newcommand{\rot}{\operatorname{\mathrm{curl}}}

\newcommand{\paren}[1]{\left(#1\right)}

\newcommand{\loc}{\mathrm{loc}}
\newcommand{\tang}{\mathrm{t}}
\renewcommand{\Im}{\operatorname{Im}}
\renewcommand{\Re}{\operatorname{Re}}

\newcommand{\kappai}{\kappa_{\rm i}} 
\newcommand{\kappae}{\kappa_{\rm e}}
\newcommand{\mui}{\mu_{\rm i}}
\newcommand{\mue}{\mu_{\rm e}}

\newcommand{\xb}{\mathbf{x}}
\newcommand{\yb}{\mathbf{y}}
\newcommand{\nhat}{\widehat{\mathbf{\n}}}
\newcommand{\xhat}{\widehat{\mathbf{x}}}
\newcommand{\yhat}{\widehat{\mathbf{y}}}
\newcommand{\zhat}{\widehat{\mathbf{z}}}
\newcommand  {\C}{{\mathbb C}}
\newcommand  {\N}{{\mathbb N}} 
\newcommand  {\R}{{\mathbb R}}
\renewcommand  {\S}{\mathbb{S}}
\renewcommand  {\P}{\mathbb{P}}
\renewcommand  {\H}{\mathbb{H}}
\newcommand {\Id}{\mathrm {I}}

\newcommand {\Omegae}{\Omega^{\rm c}}
\newcommand {\Gammaref}{\Gamma_{{\hspace{-.3mm}\rm ref}}}
 
\newcommand  {\D}{\boldsymbol{\operatorname{D}}}
\renewcommand  {\L}{\boldsymbol{\mathcal{L}}}
 
\newcommand  {\LL}{\boldsymbol{ L}}
\newcommand  {\HH}{\boldsymbol{ H}}
\newcommand  {\TT}{\boldsymbol{ T}}
\newcommand  {\EE}{\boldsymbol{ E}}
\newcommand{\FF}{F}
\newcommand{\F}{F} %{{ \mathscr{F}}}
\newcommand{\FFop}{G}
\newcommand{\dFFop}{\mathbf{\FFop}}
\newcommand{\pFFop}{\mathcal{\FFop}}
\newcommand  {\Y}{\boldsymbol{\mathcal Y}} 
\newcommand  {\parset}{{\mathcal Q}}

\newcommand{\Pq}{\mathcal{P}_{\qq}}

\newcommand{\hh}{{\boldsymbol{ h}}}
\newcommand  {\nn}{\boldsymbol{ n}}
\newcommand  {\uu}{\boldsymbol{ u}}
\newcommand  {\jj}{\boldsymbol{ j}}
\newcommand  {\mm}{\boldsymbol{ m}}
\newcommand  {\vv}{\boldsymbol{ v}}
\newcommand  {\ww}{\boldsymbol{ w}} 
 \newcommand  {\ee}{\boldsymbol{ e}}
 \newcommand  {\ff}{\boldsymbol{ f}}
 \newcommand  {\s}{\boldsymbol{s}}
 \newcommand  {\js}{\jj_{\hspace{-.5mm}\s}}
 \newcommand  {\ms}{\mm_{\rm s}}
 \renewcommand  {\gg}{\boldsymbol{ g}}
  \newcommand  {\qq}{\boldsymbol{ q}}
    \newcommand  {\dd}{\boldsymbol{ d}}
    \newcommand  {\ki}{\boldsymbol{ \xi}}
      \newcommand  {\pp}{\boldsymbol{ p}}
  \renewcommand  {\tt}{\boldsymbol{t}}
 \newcommand  {\n}{{\boldsymbol{\eta}}}
 \newcommand  {\fq}{\ff_{\hspace{-.5mm}\qq}}

\newcommand{\FS}[2]{\Phi(#1,#2)}

\newcommand{\smat}[1]{\left(\!\begin{smallmatrix} #1 \end{smallmatrix}\!\right)}

  \newcommand{\transposee}[1]{{\vphantom{#1}}{#1}^{\sf T}}  
  %$\fontdimen16\textfont2=3.5pt \fontdimen17\textfont2=3.5pt% baisse les indices
%\fontdimen13\textfont2=5pt \fontdimen14\textfont2=5pt% Žlve les exposants $
       \newcommand{\adjoint}[1]{{\vphantom{#1}}{#1}^{\text{\small\hspace{-2mm}\begin{tabular}{c}\vspace{-1.5mm}\footnotesize{*}\end{tabular}}\hspace{-2mm}}} 
       \newcommand{\Adjoint}[1]{{\vphantom{#1}}{#1}^{\text{\small\hspace{-2.5mm}\begin{tabular}{c}\vspace{-1.5mm}\footnotesize{*}\end{tabular}}\hspace{-2mm}}}   

\pagestyle{myheadings}
\thispagestyle{plain}
\markboth{T.Hohage F. Le Lou\"er}{inverse dielectric scattering problem}

\begin{document}

\renewcommand{\thefootnote}{\fnsymbol{footnote}}
\footnotetext{Institut f\"ur Numerische und Angewandte Mathematik,
Georg-August Universit\"at G\"ottingen,
37083 G\"ottingen, Germany, E-mail:
hohage@math.uni-goettingen.de}
\renewcommand{\thefootnote}{\arabic{footnote}}

\title{A spectrally  accurate method for the dielectric obstacle scattering problem and
applications to the inverse problem}
\author{Thorsten Hohage \and Fr\'ed\'erique Le Lou\"er}
% \author{
%   Thorsten Hohage
%  \thanks{ Institut f\"ur Numerische und Andgewandte Mathematik, Universit\"at G\"ottingen, 37083
%    G\"ottingen, Germany,  hohage@math.uni-goettingen.de }
%      \and
%   Fr\'ed\'erique Le Lou\"er
%  \thanks{ Institut f\"ur Numerische und Andgewandte Mathematik, Universit\"at G\"ottingen, 37083
%    G\"ottingen, Germany,  f.lelouer@math.uni-goettingen.de }
%   }
% \date{\today}
\maketitle

\begin{abstract} We analyze the inverse problem to reconstruct the shape of a 
three dimensional  homogeneous  dielectric obstacle from  the knowledge of 
noisy far field data.  The forward problem is solved by 
a  system of second kind boundary integral equations.   
For the numerical solution of these coupled integral equations we propose a 
fast spectral algorithm by transporting these equations onto the unit sphere. 
We review the differentiability properties of the boundary to 
far field operator and give a characterization of the adjoint operator of the 
first  Fr\'echet derivative. Using these results we discuss the implementation
of the iteratively regularized Gauss-Newton method for the numerical
solution of the inverse problem and give numerical results
for star-shaped obstacles. 
\end{abstract}

%\textbf{Keywords: }  
\begin{keywords}
Maxwell's equations, dielectric interface,  transmission conditions,
 boundary integral equations, spectral method,    
regularized Newton method.
\end{keywords}

%%%%%%%%%%%%%%%%%%%%%%%%%%%%%%%%%
\section{Introduction}
The problem to reconstruct the shape of scatterers from noisy far field measurements of time-harmonic  waves arises in many fields of applied physics, as for example sonar and radar applications, bio-medical imaging and non destructive testing.  Such inverse problems are severely ill-posed. 
Often they are formulated as a nonlinear least squares problem, for which regularized iterative algorithm can be applied to recover an approximate solution.

The numerical treatment of the inverse problem requires  the investigation of the forward problem.  Here we consider the scattering of time-harmonic waves at a fixed frequency $\omega$ by a three-dimensional bounded and  non-conducting homogeneous dielectric  obstacle $\Omega$. The electric permittivity $\varepsilon$ and the magnetic permeability $\mu$ are assumed to take  constant values in the interior and in the exterior of the obstacle, but discontinuous across the interface $\Gamma$. The wavenumber is given by the formula $\kappa=\omega\sqrt{\varepsilon\mu}$. In this case the forward problem is described by the system of Maxwell equations in the whole space $\R^3$, with natural transmission conditions expressing the continuity of the tangential components of the magnetic and electric fields across $\Gamma$. Let $\Omegae$ denote the exterior domain $\R^3\backslash\overline{\Omega}$ and $\nn$ denote the outer unit normal vector on the boundary $\Gamma$. We  label the dielectric quantities related to the interior domain $\Omega$ by the index $\rm i$ and to the exterior domain $\Omegae$ by the index $\rm e$. Eliminating the magnetic field in the Maxwell system we obtain the  following transmission problem:  Given an incident electric wave $\EE^{\rm inc}$ which is assumed to solve the second order Maxwell equation in the absence of any dielectric scatterer, find the electric field solution $\EE=(\EE^{\rm i},\EE^{\rm s})$ that satisfies
\begin{subequations}
\begin{align}\label{ME} 
  \Rot\Rot \EE^{\rm s} - \kappae^2\EE^{\rm s} &=  0 &&\text{ in }\Omegae,\\ 
	\Rot\Rot \EE^{\rm i} - \kappai^2\EE^{\rm i} &= 0 &&\text{ in }\Omega,
\end{align}
and the transmission conditions on $\Gamma$, 
\begin{align}\label{T1}
\nn\times \EE^{\rm i}&=\nn\times(\EE^{\rm s}+\EE^{\rm inc}) \\
\tfrac{1}{\mui}\nn\times\Rot\EE^{\rm i}&=\tfrac{1}{\mue}\nn\times\Rot(\EE^{\rm s}+\EE^{\rm inc}).
\end{align} 
In addition the scattered field $\EE^{\rm s}$ has to satisfy the Silver-M\"uller radiation condition
\begin{equation}\label{T3}\lim_{|\xb|\rightarrow+\infty}|\xb|\left| \Rot \EE^{\rm s}(\xb)\times\frac{\xb}{| \xb |}- i\omega\mue\EE^{\rm s}(\xb) \right|
 =0\,.\end{equation}
\end{subequations}
uniformly in all directions  $\xb/ |\xb |$.

Well-posedness of  the dielectric obstacle scattering problem for any positive 
real values of the  dielectric constants is well known, and this 
 problem can be reduced in several different ways to 
 coupled or single  boundary integral equations on the dielectric  interface
$\Gamma$: 
For an overview of these formulations for smooth boundaries we refer to Harrington's book \cite{Harrington} and the paper of  Martin and  Ola \cite{MartinOla}. Some pairs of integral equations have irregular frequencies and others do not, as the so-called M\"uller's system \cite{Muller}. The occurrence of irregular frequencies can be avoided by the use of single combined-field integral equation method.  This idea was first suggested by Mautz \cite{Mautz}. Existence of the solution was then proved by Ola and Martin via a regularization technique. For a Lipschitz boundary,  Buffa, Hiptmair, von Petersdorff  and Schwab \cite{BuffaHiptmairPetersdorffSchwab} derived a uniquely solvable system of integral equations  and Costabel and Le Lou\"er \cite{CostabelLeLouer1,FLL} constructed a family of four alternative single boundary integral equations extending a technique due to Kleinman and Martin \cite{KleinmanMartin} in acoustic scattering.
 
 The radiation condition implies that the scattered field $\EE^{\rm s}$  has an asymptotic behavior of the form
$$\EE^{\rm s}(\xb)=\frac{e^{i\kappae|\xb|}}{|\xb|}\EE^{\infty}(\xhat)+O\left(\frac{1}{|\xb|}\right),\quad |\xb|\rightarrow\infty,$$
uniformly in all directions $\xhat=\tfrac{\xb}{|\xb|}$. The far field pattern $\EE^{\infty}$ is a tangential vector function defined on the unit sphere $\S^2$ of $\R^3$ and is always analytic.

\medskip
The forward problem discussed in this paper is the scattering of $m$ incident plane waves of the form 
$\EE^{\rm inc}_k(\xb)=\pp_k\,e^{i\kappae \xb\cdot \dd_k}$, $k=1,\dots,m$ where 
$\dd_k,\pp_k\in \S^2$ and $\dd_k\cdot\pp_k=0$. We denote by $\FF_k$ the boundary to far field operator that maps a parametrization of the boundary $\Gamma$ onto the far field pattern $\EE^{\infty}_k$ of the scattered field
$\EE^{\rm s}_k$ of the solution $(\EE^{\rm i}_k,\EE^{\rm s}_k)$ to the problem \eqref{ME}-\eqref{T3} for the 
incident wave $\EE^{\rm inc}_k$. For simplicity we do not distinguish between the boundary $\Gamma$ and its
parametrization in this introduction. The inverse problem consists in reconstructing $\Gamma$ given noisy 
measured data described by
\begin{equation}\label{IP}  
\EE^{\infty}_{k,\delta} = \FF_k(\Gamma)+{\bf err}_k\,,\quad k=1,\dots,m\qquad
\sum_{k=1}^m\|{\bf err}_k\|_{L^2}^2\leq \delta^2\,.
\end{equation}
Here measurement errors are described by the functions ${\bf err}_k$, and the error bound $\delta$,
the incident fields $\EE^{\rm inc}_k$, and the dielectric constants are assumed to be known. 
By straightforward modifications of the algorithms described in this paper, one could simultaneously
invert for $\Gamma$ and a dielectric constant. 
%In a mathematical setting, the inverse problem of interest is formulated as follows: 
%Given noisy far field measurements $\EE^{\infty}_{1,\delta},\hdots, \EE^{\infty}_{m,\delta}$ obtained from the 
%scattering of  a finite number $m\in\N^*$ of incident plane waves characterized by the couples of directions and
%polarizations  $(\dd_{k},\pp_{k})_{k=1,\hdots,m}$ such that $\dd_{k}\cdot \pp_{k}=0$, solve $\text{for }k=1,\hdots,m$
%\begin{equation}\label{IP}  \FF(\dd_{k},\pp_{k};\Gamma)=\EE^{\infty}_{k,\delta}\,.\end{equation}
%Here, $\delta$ denotes the noise level which is measured in the $\LL^2(\S^2)$-norm, i.e.
%$$||\EE^{\infty}_{\delta}-\EE^{\infty}||_{\LL^2(\S^2)}<\delta||\EE^{\infty}||_{\LL^2(\S^2)}.$$
In this situation H\"ahner \cite{Hahner} has shown a uniqueness result 
assuming  knowledge of the far field patterns for all incoming plane waves. 
However, even with known dielectric constants it remains an open question whether or not $\Gamma$ is 
uniquely determined by only a finite number of incoming plane waves and known dielectric constants. 

Over the last two decades, much attention has been devoted to the investigation of efficient iterative method, in particular regularized  Newton-type  for nonlinear ill-posed problem via first order linearization  \cite{Bakushinskii, Hohage,Hohage2,Kirsch}. Until now, it was successfully applied to inverse acoustic scattering  problems \cite{Hohage, HohageHarbrecht}. Indeed, the use of such iterative  methods requires the analysis and an explicit form of the Fr\'echet derivatives of the boundary to far field operators $\FF_k$. The Fr\'echet derivative of the far field pattern is usually interpreted as a the far field pattern of a new scattering problem and  these characterizations are  well-known in acoustic scattering since the 90's. Many different approaches were used:  Frechet differentiability of the far field (or of the solution away from the boundary) was established by Kirsch \cite{Kirsch} and Hettlich \cite{Hettlich, HettlichErra} via variational methods, by Potthast via boundary integral representations \cite{ Potthast1, Potthast2}, by Hohage \cite{Hohage} and Schormann \cite{HohageSchormann} via the implicit function theorem and by Kress and Pa\"iv\"arinta via Green's theorem and a far field identity \cite{KressPaivarinta}. 

In electromagnetism, Fr\'echet differentiability was first investigated by Potthast \cite{Potthast3} for the perfect conductor problem extending the boundary integral equation approach. The characterization of the derivative was then improved by Kress \cite{Kress}. More recently, Fr\'echet differentiability  was analyzed by Haddar and Kress \cite{HaddarKress} for the Neumann-impedance type  obstacle scattering  problem via the use of a far field identity and by Costabel and Le Lou\"er \cite{CostabelLeLouer,CostabelLeLouer2,FLL} and Hettlich \cite{Hettlich2} for the dielectric scattering problem via the boundary integral equation approach \cite{CostabelLeLouer1, FLL} and variational methods, respectively. 

It is the purpose of the present paper to apply the iteratively regularized Gauss-Newton method to the inverse dielectric obstacle scattering problem. 
In section 2, we describe the two different  boundary integral equation methods that are used to solve the electromagnetic transmission problem via direct and indirect approaches. In section 3 we propose a new spectral method to solve these systems  which ensures superalgebraic convergence of the discrete solution to the exact solution, in the case of simply connected closed surface. Ganesh and Hawkins proposed first two methods, in the context of the perfect conductor problem,  by transforming the  integral equation on the surface $\Gamma$ in an integral equation on the unit sphere using a change of variable and then by looking for a solution  in terms of  series (component-wise) of scalar spherical harmonics \cite{GaneshHawkins1} or of series of vector spherical harmonics \cite{GaneshHawkins2}. To decrease the number of unknowns, they introduce in \cite{GaneshHawkins3} a normal transformation acting from the tangent plane to the boundary $\Gamma$ onto the tangent plane to the unit sphere, so that one only has to seek a solution in terms of tangential vector spherical harmonics. Here we use a different approach based on the Piola transform of a diffeomorphism from $\S^2$ to $\Gamma$ that maps  the energy space $\HH^{-1/2}_{\Div}(\Gamma)$ defined in the following section 
%, the set of tangential vector fields whose components are in the Sobolev space $H^{-1/2}(\Gamma)$ and whose surface divergence is in $H^{-1/2}(\Gamma)$, 
to the space  $\HH^{-1/2}_{\Div}(\S^2)$. The numerical implementation is discussed in section 4 and numerical results on the convergence rate of the method are presented.
In section 5 we recall the  main results on the  Fr\'echet differentiability of the boundary to far field operator and give a characterization  of the adjoint operator, following ideas of \cite{Hohage}, which is needed in the implementation of the regularized Newton method. In section 6, we present the inverse scattering algorithm in the special case of star-shaped obstacles.  Finally, in section 7  we show numerical experiments.
% *******cite Kress & Wienert

%%%%%%%%%%%%%%%%%%%%%%%%%%%%%%%%%
\section{The solution of the dielectric obstacle scattering problem} \label{ScatProb}
%Let $\Omega$ denote a bounded domain in $\R^{3}$  and let $\Omegae$ denote the exterior domain $\R^3\backslash\overline{\Omega}$. 
In this paper, we will assume that $\Gamma$ is the boundary of a smooth domain 
 $\Omega\subset\R^3$, which is diffeomorphic to a ball, so in particular $\Gamma$
is connected and simply connected.

\begin{notation}
{\rm We denote by $H^s(\Omega)$, $H^s_{\loc}(\overline{\Omegae})$ and $H^s(\Gamma)$ the standard (local in the case of the exterior domain) complex valued, Hilbertian Sobolev space of order $s\in\R$ defined on $\Omega$, $\overline{\Omegae}$ and $\Gamma$ respectively (with the convention $H^0=L^2$.)  Spaces of vector functions will be denoted by boldface letters, thus $\HH^s=(H^s)^3$. Moreover, $\HH^s_\tang(\Gamma):=\{\jj\in\HH^{s}(\Gamma);\;\jj\cdot\nn=0\}$ denotes the Sobolev space of tangential vector fields of order $s\in\R$. 
If $\Lambda$ is a differential operator, we write:
\begin{eqnarray*}
\HH^s(\Lambda,\Omega)& = &\{ \vv \in \HH^s(\Omega) : \Lambda \vv \in \HH^s(\Omega)\},\\
\HH^s_{\loc}(\Lambda,{\Omegae})&=& \{ \vv \in \HH^s_{\loc}(\overline{\Omegae}) : \Lambda \vv \in
\HH^s_{\loc}(\overline{\Omegae}) \}.\end{eqnarray*}
The space $\HH^s(\Lambda, \Omega)$ is endowed with the natural graph norm
$\|v\|_{\HH(\Lambda,\Omega)}:=(\|v\|_{L^2(\Omega)}^2 + \|\Lambda v\|_{L^2(\Omega)}^2)^{1/2}$. 
This defines in particular the Hilbert spaces $\HH^s(\Rot,\Omega)$ and $\HH^s(\Rot\Rot,\Omega)$ and the Fr\'echet spaces $\HH^s_{\loc}(\Rot,\Omegae)$ and $\HH^s_{\loc}(\Rot\Rot,\Omegae)$. When $s=0$ we omit the upper index $0$.\\
Analogously, we  introduce  for $s\in\R$ the Hilbert space 
\begin{eqnarray*}
  \HH_{\Div}^{s}(\Gamma)&=&\left\{ \jj\in
\HH^{s}_{\tang}(\Gamma);\;\Div_{\Gamma}\jj \in
H^{s}(\Gamma)\right\}
%\\
% \HH_{\rot}^{s}(\Gamma)&=&\left\{ \jj\in
%\HH^{s}_{\tang}(\Gamma);\;\rot_{\Gamma}\jj \in H^{s}(\Gamma)\right\},
\end{eqnarray*}
endowed with the norm
$
||\cdot||_{ \HH_{\Div}^{s}(\Gamma)}=(||\cdot||_{\HH^{s}(\Gamma)}^2
+||\Div_{\Gamma}\cdot\;||_{H^{s}(\Gamma)}^2)^{1/2}.$}
%||\cdot||_{ \HH_{\rot}^{s}(\Gamma)}&=&\paren{||\cdot||_{\HH^{s}(\Gamma)}^2
%+||\rot_{\Gamma}\cdot||_{H^{s}(\Gamma)}^2}^{1/2}.
%\end{eqnarray*}
\end{notation}

Recall that for a vector function $\uu\in\HH(\Rot,\Omega)\cap\HH(\Rot\Rot,\Omega)$ the traces $\nn\times\uu_{|\Gamma}$ and $\nn\times\Rot\uu_{|\Gamma}$ are in   $\HH_{\Div}^{-1/2}(\Gamma)$ (see e.g.\ \cite{Nedelec}).

%For a complex (or real)-valued differentiable vector function $\vv$ we denote by $[\vnabla \vv]$ the matrix whose the $j$-th column is the gradient of the $j$-th component of $\vv$ and we set $[\D\vv]=\transposee{[\vnabla \vv]}$.

%The dual space of $\HH_{\Div}^{-1/2}(\Gamma)$ for the $\LL^2$ duality product is $\HH_{\Rot}^{-1/2}(\Gamma)$ and the exterior product with the normal vector defines a bicontinuous isomorphism between $\HH_{\Div}^{-1/2}(\Gamma)$ and $\HH_{\rot}^{-\frac{1}{2}}(\Gamma)$; so that $(\nn\times\uu_{|\Gamma})\times\nn$ and $(\nn\times\Rot\uu_{|\Gamma})\times\nn$ are in $\HH_{\rot}^{-1/2}(\Gamma)$. 
% The same  properties hold true for  a vector function $\uu\in \HH_{\loc}(\Rot,{\Omegae})\cap
%\HH_{\loc}(\Rot\Rot,{\Omegae})$.

\bigskip

It will be useful to simultaneously work with two different approaches  to solve the dielectric scattering 
problems described in \cite{MartinOla}. Both methods yield boundary integral equation systems of the second kind.

Let $\FS{\kappa}{\xb}=\dfrac{e^{i\kappa|\xb|}}{4\pi| \xb|}$
denote the fundamental solution of the Helmholtz equation $ {\Delta u + \kappa^2u =0}.
$
For any  solution $\EE^s$ to the Maxwell equation $\eqref{ME}$ in $\Omegae$ that satisfies the radiation condition \eqref{T3} the Stratton-Shu representation formula
\begin{equation*}\begin{split}\EE^{\rm s}(\xb)=&\;\frac{\mue}{\kappae^2}\int_{\Gamma}\Rot^{\xb}\Rot^{\xb}\left\{\FS{\kappae}{\xb-\yb}\left(\frac{1}{\mue}\nn(\yb)\times\Rot\EE^{\rm s}(\yb)\right)\right\}ds(\yb)\\&+\int_{\Gamma}\Rot^\xb\Big\{\FS{\kappae}{\xb-\yb}\big(\nn(\yb)\times\EE^{\rm s}(\yb)\big)\Big\}ds(\yb).\end{split}\end{equation*}
holds true. Analogously, for solutions $\EE^{\rm i}$ to Maxwell's equations 
$\eqref{ME}$ in the interior domain $\Omega$ the Stratton-Shu representation formula reads
\begin{equation}\label{Ei}\begin{split}\EE^{\rm i}(\xb)=&\;-\frac{\mui}{\kappai^2}\int_{\Gamma}\Rot^{\xb}\Rot^{\xb}\left\{\FS{\kappai}{\xb-\yb}\left(\frac{1}{\mui}\nn(\yb)\times\Rot\EE^{\rm i}(\yb)\right)\right\}ds(\yb)\\&-\int_{\Gamma}\Rot^\xb\Big\{\FS{\kappai}{\xb-\yb}\big(\nn(\yb)\times\EE^{\rm i}(\yb)\big)\Big\}ds(\yb)\,,
\qquad \xb\in\Omega\,.
\end{split}\end{equation}
By Green's second formula in $\Omega$ we have
\begin{equation}\label{Einc}
\begin{split}0=&\;\frac{\mue}{\kappae^2}\int_{\Gamma}\Rot^{\xb}\Rot^{\xb}\left\{\FS{\kappae}{\xb-\yb}\left(\frac{1}{\mue}\nn(\yb)\times\Rot\EE^{\rm inc}(\yb)\right)\right\}ds(\yb)\\
&+\int_{\Gamma}\Rot^\xb\Big\{\FS{\kappae}{\xb-\yb}\big(\nn(\yb)\times\EE^{\rm inc}(\yb)\big)\Big\}ds(\yb)\,,\qquad \xb\in\Omegae\,.
\end{split}\end{equation}
Adding \eqref{Ei} and \eqref{Einc} we obtain the following integral representation for the scattered wave
\begin{equation}\label{Es}\begin{split}\EE^{\rm s}(\xb)=&\;\frac{\mue}{\kappae^2}\int_{\Gamma}\Rot^{\xb}\Rot^{\xb}\left\{\FS{\kappae}{\xb-\yb}\left(\frac{1}{\mue}\nn(\yb)\times\Rot(\EE^{\rm s}+\EE^{\rm inc})(\yb)\right)\right\}ds(\yb)\\&+\int_{\Gamma}\Rot^\xb\Big\{\FS{\kappae}{\xb-\yb}\big(\nn(\yb)\times(\EE^{\rm s}+\EE^{\rm inc}(\yb)\big)\Big\}ds(\yb)
\,,\qquad \xb\in\Omegae\,.
\end{split}\end{equation}
By \eqref{Ei} and \eqref{Es}, one can see that the solution of the forward problem is uniquely determined by the knowledge of  the interior boundary values $\nn\times \EE^{\rm i}$ and $\tfrac{1}{\mui}\nn\times\Rot\EE^{\rm i}$ and the exterior boundary values $\nn\times(\EE^{\rm s}+\EE^{\rm inc})$ and $\tfrac{1}{\mue}\nn\times\Rot(\EE^{\rm s}+\EE^{\rm inc})$.
Thanks to the transmission conditions \eqref{T1} one can  reduce in several different ways the dielectric scattering problem to a system of two equations for the two unkowns  $\nn\times(\EE^{\rm s}+\EE^{\rm inc})$ and $\tfrac{1}{\mue}\nn\times\Rot(\EE^{\rm s}+\EE^{\rm inc})$.  The most attractive  boundary integral formulation of the problem via a direct method is M\"uller's one \cite{Muller} 
since it yields a uniquely solvable system of boundary integral equations of the second kind for all positive values of the dielectric constant,  

To derive the boundary integral formulation we introduce the single layer potential $C_{\kappa}$ and the double layer potential $M_{\kappa}$ in electromagnetic potential theory by 
\begin{eqnarray*}(M_{\kappa}\jj)(\xb)&=&-\displaystyle{\int_{\Gamma}\nn(\xb)\times\Rot^\xb\{2\FS{\kappa}{\xb-\yb}\jj(\yb)\}ds(\yb)},\\(C_{\kappa}\jj)(\xb)&=&-\frac{1}{\kappa}\displaystyle{\int_{\Gamma}\nn(\xb)\times\Rot^{\xb}\Rot^\xb\{2\FS{\kappa}{\xb-\yb}\jj(\yb)\}ds(\yb)}.
\end{eqnarray*}
The operator $M_{\kappa}:\HH^{-1/2}_{\Div}(\Gamma)\to\HH^{-1/2}_{\Div}(\Gamma)$ is compact and the operator $C_{\kappa}$ is of order +1  but bounded on $\HH^{-1/2}_{\Div}(\Gamma)$.
The  Calder\'on projectors for the time-harmonic Maxwell equation
\begin{equation}\label{MEgenerique}\Rot\Rot\EE-\kappa^2\EE=0\end{equation}
  are $P_{\kappa} = \Id +
A_{\kappa}$ and $P_{\kappa}^{\rm c} = \Id - A_{\kappa}$ where 
\begin{displaymath}
A_{\kappa} = \left(\begin{array}{ll} M_{\kappa} \ C_{\kappa} \\ C_{\kappa} \ M_{\kappa} \\ \end{array}\right ).
\end{displaymath}
We have $P_{\kappa}\circ P_{\kappa}^{\rm c} = 0$. This means that if $\EE_{|\Omega}\in\HH(\Rot,\Omega)$ solves  \eqref{MEgenerique} in $\Omega$, then 
\begin{equation}\label{P}P_{\kappa}\begin{pmatrix}\nn\times\EE\\\frac{1}{\kappa}\nn\times\Rot\EE\end{pmatrix}=2\begin{pmatrix}\nn\times\EE\\\frac{1}{\kappa}\nn\times\Rot\EE\end{pmatrix} \text{ and }\;P_{\kappa}^{\rm c}\begin{pmatrix}\nn\times\EE\\\frac{1}{\kappa}\nn\times\Rot\EE\end{pmatrix}=0,\end{equation}
and if $\EE_{|\Omegae}\in\HH_{\loc}(\Rot,\Omegae)$ solves  \eqref{MEgenerique} in $\Omegae$ and satisfies the Silver-M\"uller radiation condition, then 
\begin{equation}\label{Pc}P_{\kappa}\begin{pmatrix}\nn\times\EE\\\frac{1}{\kappa}\nn\times\Rot\EE\end{pmatrix}=0 \text{ and }\;P_{\kappa}^{\rm c}\begin{pmatrix}\nn\times\EE\\\frac{1}{\kappa}\nn\times\Rot\EE\end{pmatrix}=\begin{pmatrix}2\,\nn\times\EE\\\frac{2}{\kappa}\nn\times\Rot\EE\end{pmatrix}\,.\end{equation}
%\subsection{The direct method}
Now we set $$\uu^{\rm s}=\begin{pmatrix}\nn\times\EE^{\rm s}\\\frac{1}{\mue}\nn\times\Rot\EE^{\rm s}\end{pmatrix}, \quad\uu^{\rm inc}=\begin{pmatrix}\nn\times\EE^{\rm inc}\\\frac{1}{\mue}\nn\times\Rot\EE^{\rm inc}\end{pmatrix},\quad\uu^{\rm i}=\begin{pmatrix}\nn\times\EE^{{i}}\\\frac{1}{\mui}\nn\times\Rot\EE^{\rm i}\end{pmatrix},$$
and 
\begin{displaymath}
K_{\rm i} = \left(\begin{array}{ll} M_{\kappai} \ \frac{\mui}{\kappai}C_{\kappai} \\ \frac{\kappai}{\mui}C_{\kappai} \ M_{\kappai} \\ \end{array}\right )\qquad\text{and}\qquad K_{\rm e} = \left(\begin{array}{ll} M_{\kappae} \ \frac{\mue}{\kappae}C_{\kappae} \\ \frac{\kappae}{\mue}C_{\kappae} \ M_{\kappae} \\ \end{array}\right ).
\end{displaymath}
By virtue of \eqref{P} and \eqref{Pc} we  have 
\begin{align}\label{eq1}
0&=\begin{pmatrix}1&0\\0&\frac{\kappai}{\mui}\end{pmatrix}P_{\kappai}^{\rm c}\begin{pmatrix}\nn\times\EE^{\rm i}\\\frac{1}{\kappai}\nn\times\Rot\EE^{\rm i}\end{pmatrix}=\left(\Id-K_{\rm i}\right)\uu^{\rm i}\,,\\
\label{eq2}
2\uu^{\rm inc}&=
\begin{pmatrix}1&0\\0&\frac{\kappae}{\mue}\end{pmatrix}P_{\kappae}\begin{pmatrix}\nn\times(\EE^{\rm s}+\EE^{\rm inc})\\\frac{1}{\kappae}\nn\times\Rot(\EE^{\rm s}+\EE^{\rm inc})\end{pmatrix}=\left(\Id+K_{\rm e}\right)(\uu^{\rm s}+\uu^{\rm inc})\,,
\end{align}
and the transmission conditions give
\begin{equation}\label{eq3}\uu^{\rm i}=\uu^{\rm s}+\uu^{\rm inc}\,.\end{equation}
M\"uller's boundary integral formulation has to be solved for the unknown $\uu^{\rm s}+\uu^{\rm inc}$ and is obtained by 
plugging \eqref{eq3} into \eqref{eq1} and combining the equalities \eqref{eq1} and \eqref{eq2} as follows:
\begin{equation}\label{inteq1}\big(\Id+K_{\rm e}\big)(\uu^{\rm s}+\uu^{\rm inc})+\begin{pmatrix}\frac{\mue\kappai^2}{\mui\kappae^2}&0\\0&\frac{\mui}{\mue}\end{pmatrix}\big(\Id-K_{\rm i}\big)(\uu^{\rm s}+\uu^{\rm inc})=2\uu^{\rm inc}\end{equation}
This can be rewritten as
\begin{equation}\label{eq:directmethod}
\left\{\begin{pmatrix}1+\frac{\mue\kappai^2}{\mui\kappae^2}&0\\0&1+\frac{\mui}{\mue}\end{pmatrix}+\begin{pmatrix}M_{\kappae}-\frac{\mue\kappai^2}{\mui\kappae^2}M_{\kappai}&\frac{\mue}{\kappae^2}(\kappae C_{\kappae}-\kappai C_{\kappai})\\\frac{1}{\mue}(\kappae C_{\kappae}-\kappai C_{\kappai})&M_{\kappae}-\frac{\mui}{\mue}M_{\kappai}\end{pmatrix}\right\}(\uu^{\rm s}+\uu^{\rm inc})=2\uu^{\rm inc}.
\end{equation}
Since $\kappai C_{\kappai}-\kappae C_{\kappae}$ is compact on $\HH_{\Div}^{-1/2}(\Gamma)$, the integral operator associated to the integral equation \eqref{inteq1} is a Fredholm operator of index $0$  on the Hilbert space $\HH_{\Div}^{-1/2}(\Gamma)$.  The condition $\uu^{\rm inc}\in\big(\HH^{-1/2}_{\Div}(\Gamma)\big)^2$ guarantees that the solution to the integral equation is in $\big(\HH^{-1/2}_{\Div}(\Gamma)\big)^2$ too. %The last equation is due to M\"uller.
\medskip
%The far field $\EE^{\infty}$ is a smooth function defined on the unit sphere $S^2$ and we have $\EE^{\infty}\in\LL^2_{\tang}(S^2)=\{\hh\in\LL^2(S^2);\;\hh(\xhat)\cdot\xhat=0\}.$% It admits the following integral representation
 
% \begin{equation}\label{Einfty}\begin{split}\EE^{\infty}(\xb)=&\;\frac{\mue}{4\pi}\int_{\Gammaref}e^{-i\kappae\xhat\cdot y}\Big(\big(\xhat\times\big(\nn(\yb)\times\Rot\EE^{\rm s}(\yb)\big)\big)\times\xhat\Big)\;ds(\yb)\\&+\;\frac{i\kappae}{4\pi}\;\int_{\Gammaref}e^{-i\kappae\xhat\cdot y}\big(\xhat\times\big(\nn(\yb)\times\EE^{\rm s}(\yb)\big)\big)\;ds(\yb).\end{split}\end{equation}
We present now an alternative approach via an indirect method in order to derive an other second kind system of integral equations (see \cite{MartinOla}).  It can be used to solve  electromagnetic transmission problem with general transmission conditions  of the form

\begin{equation}\label{T1generique}
\begin{aligned}
&\nn\times \EE^{\rm s}-\nn\times\EE^{\rm i}=\ff,\\
&\dfrac{1}{\mue}\nn\times\Rot\EE^{\rm s}-\dfrac{1}{\mui}\nn\times\Rot\EE^{\rm i}=\gg,
\end{aligned}
\end{equation} 
where $\ff,\gg\in\HH^{-1/2}_{\Div}(\Gamma)$ are given and is based on the layer ansatz 
\begin{align}\label{eq:ansatz}
\begin{aligned}\EE^{\rm s}(\xb)=&
\int_{\Gamma}\paren{\frac{\mue}{\kappae^2}
\Rot^{\xb}\Rot^{\xb}\Big\{\FS{\kappae}{\xb-\yb}\mm^{\rm s}(\yb)\Big\}
+\Rot^\xb\Big\{\FS{\kappae}{\xb-\yb}\jj^{\rm s}(\yb)\Big\}}ds(\yb)\\
\EE^{\rm i}(\xb)=&\int_{\Gamma}
\paren{\frac{\mui}{\kappai^2}\Rot^{\xb}\Rot^{\xb}\Big\{\FS{\kappai}{\xb-\yb}\mm^{\rm i}(\yb)\Big\}
+\Rot^\xb\Big\{\FS{\kappai}{\xb-\yb}\jj^{\rm i}(\yb)\Big\}}ds(\yb)
\end{aligned}\end{align}
 where $\jj^{\rm s},\mm^{\rm s},\jj^{\rm i},\mm^{\rm i}$ are tangential densities in $\HH^{-1/2}_{\Div}(\Gamma)$. By virtue of \eqref{P} and \eqref{Pc} and the jump relations of the electromagnetic potentials 
we have $$P_{\kappai}\begin{pmatrix}\jj^{\rm i}\\\frac{\mui}{\kappai}\mm^{\rm i}\end{pmatrix}=\begin{pmatrix}-2\,\nn\times\EE^{\rm i}\\-\frac{2}{\kappai}\nn\times\Rot\EE^{\rm i}\end{pmatrix} \text{ and } \quad P^{\rm c}_{\kappae}\begin{pmatrix}\jj^{\rm s}\\\frac{\mue}{\kappae}\mm^{\rm s}\end{pmatrix}=\begin{pmatrix}2\,\nn\times\EE^{\rm s}\\\frac{2}{\kappae}\nn\times\Rot\EE^{\rm s}\end{pmatrix}\,.$$
The transmission conditions yields
 $$\begin{pmatrix}1&0\\0&\frac{\kappai}{\mui}\end{pmatrix}P_{\kappai}\begin{pmatrix}\jj^{\rm i}\\\frac{\mui}{\kappai}\mm^{\rm i}\end{pmatrix}+\begin{pmatrix}1&0\\0&\frac{\kappae}{\mue}\end{pmatrix}P^{\rm c}_{\kappae}\begin{pmatrix}\jj^{\rm s}\\\frac{\mue}{\kappae}\mm^{\rm s}\end{pmatrix}=2 \begin{pmatrix}\ff\\\gg\end{pmatrix}\,.$$
This equation is equivalent to 
 $$\left(\Id+K_{\rm i}\right)\begin{pmatrix}\jj^{\rm i}\\\mm^{\rm i}\end{pmatrix}+\left(\Id-K_{\rm e}\right)
\begin{pmatrix}\jj^{\rm s}\\ \mm^{\rm s}\end{pmatrix}
=2\begin{pmatrix}\ff\\\gg\end{pmatrix}\,.$$
We set $\jj^{\rm i}=\frac{\mui}{\mue}\jj$, $\jj^{\rm s}=\jj$ and $\mm^{\rm i}=\frac{\mue\kappai^{2}}{\mui\kappae^2}\mm$, $\mm^{\rm s}=\mm$.
Then we arrive at the following system  of integral equations first obtained by Ola and Martin \cite{MartinOla}:
\begin{equation}\label{inteq2}\left(\Id+K_{\rm i}\right)\begin{pmatrix}\frac{\mui}{\mue}&0\\0&\frac{\mue\kappai^2}{\mui\kappae^2}\end{pmatrix}\begin{pmatrix}\jj\\\mm\end{pmatrix}+\left(\Id-K_{\rm e}\right)\begin{pmatrix}\jj\\\mm\end{pmatrix}=\begin{pmatrix}2\ff\\2\gg\end{pmatrix}
\end{equation}
Interchanging the order of the entries of the vectors $\smat{\jj\\ \mm}$ and
$\smat{\ff\\ \gg}$ it can be rewritten as
\begin{align}\label{eq:indirectmethod}
&\left\{\begin{pmatrix}
1+\frac{\mue\kappai^2}{\mui\kappae^2}&0\\0&1+\frac{\mui}{\mue}
\end{pmatrix} -\begin{pmatrix}
M_{\kappae}-\frac{\mue\kappai^2}{\mui\kappae^2}M_{\kappai} & \frac{1}{\mue}(\kappae C_{\kappae}-\kappai C_{\kappai}) \\
\frac{\mue}{\kappae^2}(\kappae C_{\kappae}-\kappai C_{\kappai}) & M_{\kappae}-\frac{\mui}{\mue}M_{\kappai}
\end{pmatrix}\right\}
\begin{pmatrix}\mm\\ \jj\end{pmatrix}
=2\begin{pmatrix}\gg\\ \ff\end{pmatrix}.
%\left\{\begin{pmatrix}1+\frac{\mui}{\mue}&0\\0&1+\frac{\mue\kappai^2}{\mui\kappae^2}\end{pmatrix}-\begin{pmatrix}M_{\kappae}-\frac{\mui}{\mue}M_{\kappai}&\frac{\mue}{\kappae^2}(\kappae C_{\kappae}-\kappai C_{\kappai})\\\frac{1}{\mue}(\kappae C_{\kappae}-\kappai C_{\kappai})&M_{\kappae}-\frac{\mue\kappai^2}{\mui\kappae^2}M_{\kappai}\end{pmatrix}\right\}\begin{pmatrix}\jj\\\mm\end{pmatrix}
%=2\begin{pmatrix}\ff\\\gg\end{pmatrix}.
\end{align}\medskip

\begin{remark}\label{rem:transposed}
{\rm Let us compare the system matrix $K_{\rm DM}$ of the direct method in \eqref{eq:directmethod}
and the system matrix $K_{\rm IM}$ of the indirect method in \eqref{eq:indirectmethod}. 
Let $K^\top:=\overline{K^*\overline{f}}$ denote the adjoint of an operator $K$ with respect 
to the bilinear rather than the sesquilinear $L^2$ product and recall that 
$M_{\kappa}^{\top} =  RM_{\kappa}R$ and $C_{\kappa}^{\top} =  RC_{\kappa}R$ with
$R\ff = \nn\times\ff$. 
%Also recall that
%$R \HH^{-1/2}_{\Div}(\Gamma)= \HH^{-1/2}_{\rot}(\Gamma)$,  
%$R\HH^{-1/2}_{\rot}(\Gamma) = \HH^{-1/2}_{\Div}(\Gamma)$, and
%$\HH^{-1/2}_{\Div}(\Gamma)'= \HH^{-1/2}_{\rot}(\Gamma)$.
%Therefore, the operators 
%$\tilde{K}_{\rm I}= \paren{\begin{smallmatrix}I&0\\ 0&R\end{smallmatrix}}
%K_{\rm I}  \paren{\begin{smallmatrix}R&0\\ 0&I\end{smallmatrix}}$ and 
%$\tilde{K}_{\rm I}= \paren{\begin{smallmatrix}I&0\\ 0&R\end{smallmatrix}}
%K_{\rm I}  \paren{\begin{smallmatrix}R&0\\ 0&I\end{smallmatrix}}$, 
%both of which are bounded from 
%$\HH^{-1/2}_{\rot}(\Gamma)\times \HH^{-1/2}_{\Div}(\Gamma)$ to
%$\HH^{-1/2}_{\Div}(\Gamma)\times \HH^{-1/2}_{\rot}(\Gamma)$,   
%are adjoint with respect to the bilinear pairings, i.e.
%\begin{equation}\label{eq:}
%\tilde{K}_{\rm D}^{\top} = \tilde{K}_{\rm I}\,.
%\end{equation}
Using this and the identity $R^2=-I$ we find that
\begin{equation}\label{eq:transposed}
K_{\rm IM}^{\top} = - \smat{R&0\\ 0&R}
K_{\rm DM}  \smat{R&0\\ 0&R}\,.
\end{equation}
This relation is useful in the context of iterative regularization methods for the
inverse problem where both systems with the operator $K_{\rm DM}$ and with the operator
$K_{\rm IM}$ have to be solved in each iteration step. If these operators are essentially represented by transposed matrices, only one matrix has to be set up and only one LU
decomposition has to be computed if the discrete linear systems are solved by 
Gaussian elimination.} %The identity \eqref{eq:transposed} will also be used for the
%characterization of the adjoint in Theorem \ref{lemm:adjoint}.
\end{remark}

It follows from the representation formula \eqref{Es} and the ansatz \eqref{eq:ansatz} that 
the far field pattern can be computed via the integral representation formulas
$$
\begin{array}{lcll}\EE^{\infty}&=&\displaystyle{\FFop(\uu^{\rm s}+\uu^{\rm inc})}
&\text{ if one solves the equation \eqref{inteq1}}\text{ or}\\
\EE^{\infty}&=&\displaystyle{\FFop\smat{\jj\\ \mm}}
&\text{ if one solves the equation \eqref{eq:indirectmethod}},\end{array}
$$
using the far field operator $\displaystyle{\FFop}:\big(\HH^{-1/2}_{\Div}(\Gamma)\big)^2\to\LL^2_{\tang}(\S^2)$ defined for $\xhat\in\S^2$ by  
\begin{equation*}\begin{split}\FFop\begin{pmatrix}\jj\\ \mm\end{pmatrix}(\xhat)
=\frac{\mue}{4\pi}\int_{\Gamma}e^{-i\kappae\xhat\cdot \yb}
\Big(\xhat\times\mm(\yb)\times\xhat\Big)ds(\yb)
+\frac{i\kappae}{4\pi}\int_{\Gamma}e^{-i\kappae\xhat\cdot \yb}\big(\xhat\times\jj(\yb)\big)ds(\yb)\,.
\end{split}\end{equation*}

\begin{notation}\label{not:triple_cross}
{\rm Although in general ${\bf a}\times ({\bf b}\times {\bf c})=
({\bf a}\cdot{\bf c}){\bf b}-({\bf a}\cdot{\bf b}){\bf c}$ 
is different from $({\bf a}\times {\bf b})\times {\bf c}$
for ${\bf a},{\bf b},{\bf c}\in\mathbb{R}^3$,
both expressions coincide for ${\bf a}={\bf c}$. The vector  
${\bf a}\times ({\bf b}\times {\bf a})= ({\bf a}\times {\bf b})\times {\bf a}$
is the orthogonal projection of $\bf b$ onto the plane orthogonal to $\bf a$
and is denoted by ${\bf a}\times {\bf b}\times {\bf a}$.}
\end{notation}

%%%%%%%%%%%%%%%%%%%%%%%%%%%%%%
 \section{ A high-order spectrally accurate  algorithm}
 %+++++++++++++++++++++++++++++++++++++
The first step in the derivation of our algorithm is a transformation of the integral equations on
$\Gamma$ derived above to integral equations on the unit sphere $\S^2$ of $\R^3$. 
%\xhat\in\S^2\mapsto \qq(\xhat)\in\Gamma$.
We denote by $\theta,\phi$ the spherical coordinates of any point $\xhat\in\S^2$, i.e.
\begin{equation}\label{eq:spherical_coo}
\xhat=\psi(\theta,\phi)=\begin{pmatrix}\sin\theta\cos\phi\\\sin\theta\sin\phi\\\cos\theta\end{pmatrix},\quad(\theta,\phi)\in\,]0;\pi[\times[0;2\pi[\,\cup\{(0,0);(0,\pi)\}\,.
\end{equation}
The tangent and the cotangent planes at any point $\xhat=\psi(\theta,\phi)\in\S^2$ is generated by the unit vectors 
\(%\begin{equation}\label{sphcoordsyst}
\ee_{\theta}=\frac{\partial\psi}{\partial\theta}(\theta,\phi)
%=\begin{pmatrix}\cos\theta\cos\phi\\\cos\theta\sin\phi\\-\sin\theta\end{pmatrix}
\) and \(
%\text{ and } 
\ee_{\phi}=\frac{1}{\sin\theta}\frac{\partial\psi}{\partial\phi}(\theta,\phi).
%=\begin{pmatrix}-\sin\phi\\\cos\phi\\0\end{pmatrix}.
%\end{equation}
\)
The triplet $(\xhat,\ee_{\theta},\ee_{\phi})$ forms an %direct 
orthonormal system. 
The determinant of the Jacobian is $J_{\psi}(\theta,\phi)=\sin\theta$. 

Let $\qq:\S^2\rightarrow\Gamma$ be a parametrization of class $\mathscr{C}^1$ at least. 
We will use the notation of the appendix. 
%We denote by $\tau_{\qq}$  the transformation ("pull-back") that maps a function $u$ defined on $\Gamma$ onto the function $\tau_{\qq}(u)=u\circ \qq$ defined on $\S^2$. 
The total derivative $[\D \qq(\xhat)]$  maps the tangent plane $\TT_{\xhat}$ to $\S^2$ at the point $\xhat$ onto the tangent plane $\TT_{\qq(\xhat)}$ to $\Gamma$ at the point $\qq(\xhat)$. 
% In view of the formulas \eqref{G}-\eqref{R}  the tangential gradient and the tangential vector curl of any function $h$ defined on the unit sphere are given by the following formulas:
% \begin{align}\label{Gsphere}
% (\grad_{\S^2}h)\circ\psi&=\dfrac{\partial (h\circ\psi)}{\partial \theta}\ee_{\theta}+\dfrac{1}{J_{\psi}}\dfrac{\partial (h\circ\psi)}{\partial\phi}\ee_{\phi}\,,\\
% \label{RRsphere}(
% \Rot _{\S^2}h)\circ\psi&=\dfrac{1}{J_{\psi}}\dfrac{\partial (h\circ\psi)}{\partial\phi}\ee_{\theta}-\dfrac{\partial (h\circ\psi)}{\partial \theta}\ee_{\phi}\,.
% \end{align}
% The surface divergence and the surface scalar curl of any tangential vector function $\hh=h_{\theta}\ee_{\theta}+h_{\phi}\ee_{\phi}$ defined on unit sphere are given by the following formulas:
% \begin{align}\label{Dsphere}
% (\Div _{\S^2}\hh)\circ\psi&=\dfrac{1}{J_{\psi}}\left(\dfrac{\partial J_{\psi}(h_{\theta}\circ\psi)}{\partial \theta}+\dfrac{\partial (h_{\phi}\circ\psi)}{\partial\phi}\right)\,,\\
% \label{Rsphere}
% (\rot _{\S^2}\hh)\circ\psi&=\dfrac{1}{J_{\psi}}\left(\dfrac{\partial J_{\psi}(h_{\phi}\circ\psi)}{\partial \theta}-\dfrac{\partial (h_{\theta}\circ\psi)}{\partial\phi}\right)\,.
% \end{align}
The latter  is generated by the vectors
\begin{align*}
\tt_{1}(\xhat)&=\ee_{1}(\qq(\xhat))=\frac{\partial \qq\circ\psi}{\partial\theta}\circ\psi^{-1}
%=[\D_{\S^2} \qq(\xhat)]\ee_{\theta}
=[\D \qq(\xhat)]\ee_{\theta}\,,\\
\tt_{2}(\xhat)&=\frac{1}{J_{\psi}\circ\psi^{-1}(\xhat)}\ee_{2}(\qq(\xhat))
=\frac{1}{J_{\psi}\circ\psi^{-1}(\xhat)}\frac{\partial \qq\circ\psi}{\partial\phi}\circ\psi^{-1}%=[\D_{\S^2} \qq(\xhat)]\ee_{\phi}
=[\D \qq(\xhat)]\ee_{\phi}\,.
\end{align*}
The determinant $J_{\qq}$ of the Jacobian of the change of variables $\qq:\S^2\to\Gamma$
and the normal vector $\nn\circ\qq$ can be computed via the formulas 
$$J_{\qq}=\frac{J_{\qq\circ\psi}\circ\psi^{-1}}{ J_{\psi}\circ\psi^{-1}}=\big|\tt_{1}\times\tt_{2}\big| \text{ and } \nn\circ\qq=\frac{(\ee_{1}\circ\qq)\times(\ee_{2}\circ\qq)}{J_{\qq\circ\psi}\circ\psi^{-1}}=\frac{\tt_{1}\times\tt_{2}}{J_{\qq}}.$$
The parametrization $\qq:\S^2\to\Gamma$ being a diffeomorphism, we set $[{\D \qq(\xhat)}]^{-1}=[{\D \qq^{-1}]\circ\qq(\xhat)}$. The transposed matrix $[\adjoint{\D \qq(\xhat)}]^{-1}$  maps the cotangent plane $\TT^{*}_{\xhat}$ to $\S^2$ at the point $\xhat$ onto the cotangent plane $\TT^{*}_{\qq(\xhat)}$ to $\Gamma$ at the point $\qq(\xhat)$. The latter is generated by the vectors
\begin{align*}
\tt^{1}(\xhat)&=\ee^{1}(\qq(\xhat))=\frac{1}{J_{\qq\circ\psi}\circ\psi^{-1}}\ee_{2}(\qq(\xhat))\times\nn(\qq(\xhat))=\frac{\tt_{2}(\qq(\xhat))\times\nn(\qq(\xhat))}{J_{\qq}(\xhat)}=[\adjoint{\D \qq(\xhat)}]^{-1}\ee_{\theta}\,,\\
\tt^{2}(\xhat)&
=J_{\psi}\circ\psi^{-1}(\xhat)\,\ee^{2}(\qq(\xhat))=\frac{\nn(\qq(\xhat))\times\tt_{1}(\qq(\xhat))}{J_{\qq}(\xhat)}
=[\adjoint{\D \qq(\xhat)}]^{-1}\ee_{\phi}\,.
\end{align*}
In view of the formulas \eqref{G}-\eqref{R}, it is straightforward to deduce the following transformation formulas for the surface differential operators: 
\begin{align}\label{transformSurfaceDOP}
\begin{aligned}
%\label{Gstar}
&(\grad_{\Gamma}u)\circ\qq=[\adjoint{\D \qq}]^{-1}\grad_{\S^2}(u\circ\qq),
%\label{RRstar}
&&(\Rot_{\Gamma}u)\circ\qq=\frac{1}{J_{\qq}}[\D\qq]\Rot_{\S^2}(u\circ\qq),\\
%\label{Dstar}
&(\Div_{\Gamma}\vv)\circ\qq=\frac{1}{J_{\qq}}\Div_{\S^2}\big(J_{\qq}\,[\D \qq]^{-1}(\vv\circ\qq)\big),
%\label{Rstar}
&&(\rot_{\Gamma}\ww)\circ\qq=\frac{1}{J_{\qq}}\rot_{\S^2}\big([\adjoint{\D \qq}](\ww\circ\qq)\big).
\end{aligned}
\end{align}
From this we can now introduce a boundedly invertible operator from $\HH^{-1/2}_{\Div}(\Gamma)$ to $\HH^{-1/2}_{\Div}(\S^2)$. We first recall that $\HH^{-1/2}_{\Div}(\Gamma)$ admits the Hodge decomposition \cite{dlB}
\begin{equation}\label{eq:Hodge_decomp}
\HH^{-1/2}_{\Div}(\Gamma)=\grad_{\Gamma}H^{\frac{3}{2}}(\Gamma)\oplus\Rot_{\Gamma}H^{\frac{1}{2}}(\Gamma)
\end{equation}
provided that the surface $\Gamma$ is smooth and simply connected, which we have
assumed. A first transformation, which intertwines with the Hodge deomposition, is the following:
$$
\begin{array}{cccl}
\HH^{-\frac{1}{2}}_{\Div}(\Gamma)&\longrightarrow& &\quad\HH^{-\frac{1}{2}}_{\Div}(\S^2)
\vspace{1mm}\\
\jj=\grad_{\Gamma}\,p_{1}+\Rot_{\Gamma}p_{2}&\mapsto&\js&=[\adjoint{\D \qq}](\grad_{\Gamma}\,p_{1})\circ\qq
+J_{\qq}[\D\qq]^{-1}(\Rot_{\Gamma}p_{2})\circ\qq
\\
&&&=\grad_{\S^2}\,(p_{1}\circ\qq)+\Rot_{\S^2}(p_{2}\circ\qq).
\end{array}
$$
This transformation was first considered by Costabel and Le Lou\"er \cite{CostabelLeLouer2} in the context of the shape differentiability analysis of the boundary integral operators $M_{\kappa}$ and $C_{\kappa}$. However, for the numerical solution of boundary integral equations it is
inconvenient as it requires explicit knowledge of the Hodge decomposition. Therefore, 
we will use a second tranformation, the so-called Piola transform of $\qq$, introduced in the
following lemma:

\begin{lemma}
The linear mapping
\begin{equation}
\label{Trdiv}
\begin{array}{ccc}
\Pq:\HH^{-\frac{1}{2}}_{\Div}(\Gamma)&\longrightarrow& \HH^{-\frac{1}{2}}_{\Div}(\S^2)
\\
\jj&\mapsto&\js=J_{\qq}[\D\qq]^{-1}(\jj\circ\qq).
\end{array}
\end{equation}
is well-defined and bounded and has a bounded inverse 
$\Pq^{-1}:\HH^{-1/2}_{\Div}(\S^2)\to\HH^{-1/2}_{\Div}(\Gamma)$, 
$\Pq^{-1}\js = (\frac{1}{J_q}[\D\qq]\js)\circ \qq^{-1}$. 
\end{lemma}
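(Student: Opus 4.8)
The plan is to verify the claim in three stages: first that $\Pq$ maps into tangential fields on $\S^2$ with the stated inverse as a pointwise identity, second that the transformation intertwines the surface divergence operators in a way that makes boundedness on $\HH^{-1/2}_{\Div}$ transparent, and third to assemble the boundedness and invertibility statements. For the first stage, I would note that $[\D\qq(\xhat)]$ maps $\TT_{\xhat}\S^2$ onto $\TT_{\qq(\xhat)}\Gamma$, so $[\D\qq]^{-1}(\jj\circ\qq)(\xhat)$ is a well-defined tangent vector to $\S^2$ whenever $\jj$ is tangent to $\Gamma$; multiplying by the scalar $J_{\qq}$ does not change tangentiality. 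The formula $\Pq^{-1}\js = (\tfrac{1}{J_\qq}[\D\qq]\js)\circ\qq^{-1}$ is then checked by direct substitution: $\Pq(\Pq^{-1}\js) = J_{\qq}[\D\qq]^{-1}\big((\tfrac{1}{J_\qq}[\D\qq]\js)\circ\qq^{-1}\circ\qq\big) = J_{\qq}[\D\qq]^{-1}\tfrac{1}{J_\qq}[\D\qq]\js = \js$, and symmetrically for the other composition, using that $\qq$ is a diffeomorphism so $\qq^{-1}\circ\qq=\mathrm{I}$ and $J_{\qq}>0$ everywhere by smoothness and compactness of $\S^2$.

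For the second stage, the key identity is the third transformation formula in \eqref{transformSurfaceDOP}, namely $(\Div_{\Gamma}\jj)\circ\qq = \tfrac{1}{J_{\qq}}\Div_{\S^2}\big(J_{\qq}[\D\qq]^{-1}(\jj\circ\qq)\big) = \tfrac{1}{J_{\qq}}\Div_{\S^2}(\Pq\jj)$. Thus $\Div_{\S^2}(\Pq\jj) = J_{\qq}\,(\Div_{\Gamma}\jj)\circ\qq$. Since $\qq$ is a smooth diffeomorphism, $\jj\mapsto \jj\circ\qq$ and multiplication by the smooth positive function $J_{\qq}$ are bounded isomorphisms on every Sobolev space $H^s$ of scalar functions, and likewise $\jj\mapsto J_{\qq}[\D\qq]^{-1}(\jj\circ\qq)$ is a bounded isomorphism on $\HH^{s}_{\tang}$ (componentwise it is composition with a diffeomorphism followed by multiplication by smooth matrix-valued functions). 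Restricting to $s=-1/2$: given $\jj\in\HH^{-1/2}_{\Div}(\Gamma)$, we get $\Pq\jj\in\HH^{-1/2}_{\tang}(\S^2)$ with the bound $\|\Pq\jj\|_{\HH^{-1/2}(\S^2)}\lesssim\|\jj\|_{\HH^{-1/2}(\Gamma)}$, and $\Div_{\S^2}(\Pq\jj) = J_{\qq}(\Div_{\Gamma}\jj)\circ\qq\in H^{-1/2}(\S^2)$ with $\|\Div_{\S^2}(\Pq\jj)\|_{H^{-1/2}(\S^2)}\lesssim\|\Div_{\Gamma}\jj\|_{H^{-1/2}(\Gamma)}$; adding the squares gives boundedness of $\Pq$ into $\HH^{-1/2}_{\Div}(\S^2)$. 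The identical argument applied to $\qq^{-1}$ (which is also a smooth diffeomorphism, with $J_{\qq^{-1}} = (1/J_{\qq})\circ\qq^{-1}$) shows that the pointwise inverse found in stage one is bounded from $\HH^{-1/2}_{\Div}(\S^2)$ to $\HH^{-1/2}_{\Div}(\Gamma)$.

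The main subtlety — and the only place requiring care — is the mapping property on the fractional-order space $\HH^{-1/2}$ rather than on $\LL^2$ or on integer-order spaces. Composition with a smooth diffeomorphism and multiplication by a smooth function are classical bounded operations on $H^s(\S^2)$ for all real $s$ (by duality from the case $s\ge 0$, or directly via local charts and interpolation), so one should simply invoke this; the slight wrinkle is that the natural object is not $\jj$ itself but the pair $(\Pq\jj,\Div_{\S^2}\Pq\jj)$, and one must check that the divergence of the transported field equals the transported divergence, which is exactly formula \eqref{transformSurfaceDOP}. Once that intertwining relation is in hand, the graph-norm estimate is immediate. I would therefore present the proof as: (i) pointwise well-definedness and the explicit inverse formula by linear algebra; (ii) the divergence intertwining identity quoted from \eqref{transformSurfaceDOP}; (iii) boundedness of $\jj\mapsto\jj\circ\qq$ and of multiplication by $J_{\qq}^{\pm1}$, $[\D\qq]^{\pm1}$ on the relevant Sobolev spaces; (iv) combine to get boundedness of $\Pq$ and, applying the same to $\qq^{-1}$, of $\Pq^{-1}$.
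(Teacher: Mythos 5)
Your argument is correct, but it follows a different route than the paper's own proof. You establish membership of $\Div_{\S^2}\js$ in $H^{-1/2}(\S^2)$ via the intertwining relation $\Div_{\S^2}(\Pq\jj)=J_{\qq}\,(\Div_{\Gamma}\jj)\circ\qq$ (the third formula in \eqref{transformSurfaceDOP}), combined with the classical fact that pullback by a smooth diffeomorphism and multiplication by smooth functions are bounded on $H^{s}$ for all real $s$; the same reasoning applied to $\qq^{-1}$, together with your (correct) observation that $(\frac{1}{J_{\qq}}[\D\qq]\js)\circ\qq^{-1}$ is exactly $\mathcal{P}_{\qq^{-1}}\js$, handles the inverse. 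The paper instead invokes the Hodge decomposition \eqref{eq:Hodge_decomp}: writing $\jj=\grad_{\Gamma}p_1+\Rot_{\Gamma}p_2$ with $p_1\in H^{3/2}(\Gamma)$, $p_2\in H^{1/2}(\Gamma)$, it notes that the Piola transform sends $\Rot_{\Gamma}p_2$ to $\Rot_{\S^2}(p_2\circ\qq)$, whose surface divergence vanishes by \eqref{eq:chain}, while the transported gradient part lies in $\HH^{1/2}(\S^2)$ so its divergence is automatically in $H^{-1/2}(\S^2)$; boundedness and the inverse are then dismissed as obvious/analogous. Your route is arguably more economical and yields the quantitative identity $\Div_{\S^2}\Pq\jj=J_{\qq}(\Div_{\Gamma}\jj)\circ\qq$ explicitly (which the paper itself uses later, in Remark \ref{rem:eval_rhs_der}), giving the graph-norm estimate in one line without needing the Hodge splitting; its only delicate point, which you flag but do not fully spell out, is that the transformation formula \eqref{transformSurfaceDOP}, derived for $\mathscr{C}^1$ fields, must be extended to $\jj\in\HH^{-1/2}_{\Div}(\Gamma)$ by density (both sides being continuous into $H^{-3/2}(\S^2)$) or via the duality \eqref{dualgrad}. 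The paper's Hodge-based argument sidesteps stating that identity on the negative-order space and has the side benefit of exhibiting how $\Pq$ interacts with the Hodge structure, but it relies on the decomposition being available (smooth, simply connected $\Gamma$), an assumption your argument does not need.
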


\begin{proof}
To see that $\js$ belongs to $\HH^{-1/2}_{\Div}(\S^2)$, write
$\jj=\grad_{\Gamma}\,p_{1}+\Rot_{\Gamma}p_{2}$ with $p_1\in H^{3/2}(\Gamma)$
and $p_2\in H^{1/2}(\Gamma)$ according to \eqref{eq:Hodge_decomp} and note that
$$
\Div_{\S^2} J_{\qq}[\D\qq]^{-1}(\Rot_{\Gamma}p_2)\circ\qq 
= \Div_{\S^2}\Rot_{\S^2}(p_2\circ\qq)=0$$ 
using \eqref{eq:chain}.
As $ J_{\qq}[\D\qq]^{-1} (\grad_{\Gamma}p_1)\circ\qq \in \HH^{1/2}(\S^2)$ it follows that 
$\Div_{\S^2} \js  \in H^{-1/2}(\Gamma)$, and $\jj\in H^{-1/2}(\Gamma)$ implies
$\js\in H^{-1/2}(\S^2)$. The boundedness of $\Pq$ is obvious.
The proof for $\Pq^{-1} = \mathcal{P}_{\qq^{-1}}$ is analogous. 
\end{proof}

% The construction of our spectral method is based on the combination of the boundary integral operators $M_{\kappa}$ and $C_{\kappa}$ with the transformation \eqref{Trdiv} and its inverse.
%   In practice, we apply the operator \eqref{Trdiv} to both sides in the boundary integral equations  \eqref{inteq1} and \eqref{inteq2} and we insert the identity 
%   $$\tau_{q}\big(\Id_{\HH^{-\frac{1}{2}}_{\Div}(\Gamma)}\big)=\Big(\frac{1}{J_{\qq}}[\D\qq]\Big)\Big(J_{\qq}[\D\qq]^{-1}\Big)$$ between the integral operators and the unknowns.  Thus we replace  the boundary integral operators $M_{\kappa}$ and $C_{\kappa}$ in \eqref{inteq1} and \eqref{inteq2} by the operators 
% $\mathcal{M}_{\kappa},\mathcal{C}_{\kappa}:\HH^{-1/2}_{\Div}(\S^2)
% \to \HH^{-1/2}_{\Div}(\S^2)$	
% defined by %for any density $\js\in\HH^{-1/2}_{\Div}(\S^2)$ by:
We construct our spectral method by replacing the boundary integral operators 
$M_{\kappa}$ and $C_{\kappa}$ in \eqref{inteq1} and \eqref{inteq2} by the operators 
\[
\mathcal{M}_{\kappa}:= \Pq M_{\kappa}\Pq^{-1}\qquad\mbox{and}\qquad 
\mathcal{C}_{\kappa}:= \Pq C_{\kappa}\Pq^{-1}
% \to \HH^{-\frac{1}{2}}_{\Div}(\S^2)$	
\]
which map $\HH^{-1/2}_{\Div}(\S^2)$ boundedly into itself and are given by
\begin{align*}
\mathcal{M}_{\kappa}\,\js
%&=J_{q}[\D \qq]^{-1}\tau_{\qq}\Big(M_{\kappa}\tau_{\qq}^{-1}\big(\dfrac{1}{J_{\qq}}[\D\qq]\js\big)\Big)\\
&=-J_{q}[\D \qq]^{-1}\displaystyle{\int_{{\S^2}}(\nn\circ\qq)\times\Rot\{2\FS{\kappa}{\qq(\cdot)-\qq(\yhat)}[\D \qq(\yhat)]\js(\yhat)\}ds(\yhat)}\,,\\
\mathcal{C}_{\kappa}\,\js
%&=J_{q}[\D \qq]^{-1}\tau_{\qq}\Big(C_{\kappa}\tau_{\qq}^{-1}\big(\dfrac{1}{J_{\qq}}[\D\qq]\js\big)\Big)\\
&=-\kappa\, J_{q}[\D \qq]^{-1}\int_{{\S^2}}(\nn\circ\qq)\times\{2\FS{\kappa}{\qq(\cdot)-\qq(\yhat)}[\D \qq(\yhat)]\js(\yhat)\}ds(\yhat)\\
&-\dfrac{1}{\kappa}J_{q}[\D \qq]^{-1}\int_{{\S^2}}(\nn\circ\qq)\times\grad\Div\{2\FS{\kappa}{\qq(\cdot)-\qq(\yhat)}[\D\qq(\yhat)]\js(\yhat)\}ds(\yhat)\,.
\end{align*}
The new unknowns  will be two tangential vector densities in $\HH^{-1/2}_{\Div}(\S^2)$ obtained by applying the operator \eqref{Trdiv} to the unknowns  in \eqref{inteq1} and \eqref{inteq2}. 
%The identity operator on $\HH^{-1/2}_{\Div}(\Gamma)$ is transformed into the identity operator on $\HH^{-1/2}_{\Div}(\S^2)$.
  
 In our case we have to implement the  compact operators $\mathcal{M}_{\kappai}$, $\mathcal{M}_{\kappae}$ and 
$\kappae\mathcal{C}_{\kappae}-\kappai\mathcal{C}_{\kappai}$. To this end, we first split their kernels into a smooth and a weakly singular part. 
 We introduce the functions  
\begin{align*} 
\mathcal{S}_{1}(\qq;\kappa,\xhat,\yhat)&=\frac{1}{2\pi}\cos(\kappa|\qq(\xhat)-\qq(\yhat)|),\\
 \mathcal{S}_{2}(\qq;\kappa,\xhat,\yhat)&=\frac{1}{2\pi}\left\{\begin{array}{ll}\dfrac{\sin(\kappa|\qq(\xhat)-\qq(\yhat)|)}{|\qq(\xhat)-\qq(\yhat)|}&\xhat\not=\yhat,\\\kappa&\xhat=\yhat\end{array}\right.
\quad\mbox{and} \\ 
R(\qq;\xhat,\yhat)&=\frac{|\xhat-\yhat|}{|\qq(\xhat)-\qq(\yhat)|}\,.
\end{align*}
% Moreover, we need the identities
%\begin{eqnarray}\label{ncross1}\Big(J_{\qq}[\D \qq]^{-1}\Big)\Big((\nn\circ\qq)\times\cdot\Big)=\Big(\xhat\times\cdot\Big)\,\transposee{[\D \qq]}\\\label{ncross2}\Big((\nn\circ\qq)\times\cdot\Big)\Big([\adjoint{\D \qq}]^{-1}\Big)=\dfrac{1}{J_{\qq}}[\D \qq]\Big(\xhat\times\cdot\Big).\end{eqnarray}
Then $\mathcal{M}_{\kappa}$ can be rewritten as
$$\mathcal{M}_{\kappa}\,\js(\xhat)=\int_{\S^2}\frac{R(\qq;\xhat,\yhat)}{|\xhat-\yhat|}\mathcal{M}_{1}(\qq;\kappa,\xhat,\yhat)\js(\yhat)ds(\yhat)+i\int_{\S^2}\mathcal{M}_{2}(\qq;\kappa,\xhat,\yhat)\js(\yhat)ds(\yhat)$$
where $\mathcal{M}_{1}(\qq;\kappa,\xhat,\yhat)$ and $\mathcal{M}_{2}(\qq;\kappa,\xhat,\yhat)$ are $3\times3$ matrices given by
\begin{align*}
\mathcal{M}_{1}(\qq;\kappa,\xhat,\yhat)&=\left(\frac{\mathcal{S}_{1}(\qq;\kappa,\xhat,\yhat)}{|\qq(\xhat)-\qq(\yhat)|^2}+\kappa \mathcal{S}_{2}(\qq;\kappa,\xhat,\yhat)\right)\mathcal{V}(\qq,\xhat,\yhat)\,, \\
\mathcal{M}_{2}(\qq;\kappa,\xhat,\yhat)&=\dfrac{\mathcal{S}_{2}(\qq;\kappa,\xhat,\yhat)-\kappa\mathcal{S}_{1}(\qq;\kappa,\xhat,\yhat)}{|\qq(\xhat)-\qq(\yhat)|^2}\mathcal{V}(\qq;\xhat,\yhat)
\end{align*}
with 
\begin{align*}
\mathcal{V}(\qq;\xhat,\yhat)&
=\tt_{2}(\xhat)\cdot\big(\tt_{1}(\yhat)\times\big(\qq(\xhat)-\qq(\yhat)\big)\big)
%a_{1}(\qq;\xhat,\yhat)
\,\ee_{\theta}(\xhat)\otimes\ee_{\theta}(\yhat)\\
&+\tt_{2}(\xhat)\cdot\big(\tt_{2}(\yhat)\times\big(\qq(\xhat)-\qq(\yhat)\big)\big)
%+a_{2}(\qq;\xhat,\yhat)
\,\ee_{\theta}(\xhat)\otimes\ee_{\phi}(\yhat)\\
&-\tt_{1}(\xhat)\cdot\big(\tt_{1}(\yhat)\times\big(\qq(\xhat)-\qq(\yhat)\big)\big)
%+\;a_{3}(\qq;\xhat,\yhat)
\,\ee_{\phi}(\xhat)\otimes\ee_{\theta}(\yhat)\\
&-\tt_{1}(\xhat)\cdot\big(\tt_{2}(\yhat)\times\big(\qq(\xhat)-\qq(\yhat)\big)\big)
%+a_{4}(\qq;\xhat,\yhat)
\,\ee_{\phi}(\xhat)\otimes\ee_{\phi}(\yhat),
\end{align*}
% where
% $$\begin{array}{lcl}
% a_{1}(\qq;\xhat,\yhat)&=&\tt_{2}(\xhat)\cdot\big(\tt_{1}(\yhat)\times\delta\qq\big),\\
% a_{2}(\qq;\xhat,\yhat)&=&\tt_{2}(\xhat)\cdot\big(\tt_{2}(\yhat)\times\delta\qq\big),\\
% a_{3}(\qq;\xhat,\yhat)&=&-\tt_{1}(\xhat)\cdot\big(\tt_{1}(\yhat)\times\delta\qq\big),\\
% a_{4}(\qq;\xhat,\yhat)&=&-\tt_{1}(\xhat)\cdot\big(\tt_{2}(\yhat)\times\delta\qq\big).
%\end{array}$$
The operator $\kappae\mathcal{C}_{\kappae}-\kappai\mathcal{C}_{\kappai}$ can be rewritten as 
$$\begin{array}{lcl}
\paren{\kappae\mathcal{C}_{\kappae}-\kappai\mathcal{C}_{\kappai}}\,\js(\xhat)
&=&\displaystyle{\int_{\S^2}\dfrac{R(\qq;\xhat,\yhat)}{|\xhat-\yhat|}
\left(\mathcal{C}_{1}(\qq;\kappae,\xhat,\yhat)-\mathcal{C}_{1}(\qq;\kappai,\xhat,\yhat)\right)\js(\yhat)ds(\yhat)}\vspace{2mm}\\&&+\;i\displaystyle{\int_{\S^2}\left(\mathcal{C}_{2}(\qq;\kappae,\xhat,\yhat)-\mathcal{C}_{2}(\qq;\kappai,\xhat,\yhat)\right)\js(\yhat)ds(\yhat)}\end{array}$$
where $\mathcal{C}_{1}(\qq;\kappa,\xhat,\yhat)$ and $\mathcal{C}_{2}(\qq;\kappa,\xhat,\yhat)$ are $3\times3$ matrices given by
$$\begin{array}{lcl}\mathcal{C}_{1}(\qq;\xhat,\yhat)&=&\left(\kappa^2\mathcal{S}_{1}(\qq;\kappa,\xhat,\yhat)-\dfrac{\mathcal{S}_{1}(\qq;\kappa,\xhat,\yhat)}{|\qq(\xhat)-\qq(\yhat)|^2}-\kappa \mathcal{S}_{2}(\qq;\kappa,\xhat,\yhat)\right)\mathcal{V}_{1}(\qq;\xhat,\yhat)
\vspace{1mm}\\
&&\hspace{-1cm}+\left(\dfrac{\mathcal{S}_{1}(\qq;\kappa,\xhat,\yhat)}{|\qq(\xhat)-\qq(\yhat)|^2}\left(-\kappa^2+\dfrac{3}{|\qq(\xhat)-\qq(\yhat)|^2}\right)+3\kappa \dfrac{\mathcal{S}_{2}(\qq;\kappa,\xhat,\yhat)}{|\qq(\xhat)-\qq(\yhat)|^2}\right)\mathcal{V}_{2}(\qq;\xhat,\yhat)\,,
\end{array}$$
$$\begin{array}{lcl}\mathcal{C}_{2}(\qq;\xhat,\yhat)&=&\left(\kappa^2\mathcal{S}_{2}(\qq;\kappa,\xhat,\yhat)-\dfrac{\mathcal{S}_{2}(\qq;\kappa,\xhat,\yhat)-\kappa\mathcal{S}_{1}(\qq;\kappa,\xhat,\yhat)}{|\qq(\xhat)-\qq(\yhat)|^2}\right)\mathcal{V}_{1}(\qq;\xhat,\yhat)
\vspace{1mm}\\
&&\hspace{-1cm}-\left(3\kappa\dfrac{\mathcal{S}_{1}(\qq;\kappa,\xhat,\yhat)}{|\qq(\xhat)-\qq(\yhat)|^4}-\dfrac{\mathcal{S}_{2}(\qq;\kappa,\xhat,\yhat)}{|\qq(\xhat)-\qq(\yhat)|^2}\left(-\kappa^2+\dfrac{3}{|\qq(\xhat)-\qq(\yhat)|^2}\right)\right)\mathcal{V}_{2}(\qq;\xhat,\yhat)\,,
\end{array}$$
with %$\delta\qq := \qq(\xhat)-\qq(\yhat)$ and 
\begin{align*}
\mathcal{V}_{1}(\qq;\kappa,\xhat,\yhat)&=
(\tt_{2}(\xhat)\cdot\tt_{1}(\yhat))%b_{1}(\qq;\xhat,\yhat)
\,\ee_{\theta}(\xhat)\otimes\ee_{\theta}(\yhat)
+(\tt_{2}(\xhat)\cdot\big(\tt_{2}(\yhat)\big)) %b_{2}(\qq;\xhat,\yhat)
\,\ee_{\theta}(\xhat)\otimes\ee_{\phi}(\yhat)\\
&-(\tt_{1}(\xhat)\cdot\tt_{1}(\yhat))%+b_{3}(\qq;\xhat,\yhat)
\,\ee_{\phi}(\xhat)\otimes\ee_{\theta}(\yhat)
-(\tt_{1}(\xhat)\cdot\tt_{2}(\yhat))%+b_{4}(\qq;\xhat,\yhat)
\,\ee_{\phi}(\xhat)\otimes\ee_{\phi}(\yhat),\\
%\end{align*}
%where
%\begin{align*}
%&b_{1}(\qq;\xhat,\yhat)=\tt_{2}(\xhat)\cdot\tt_{1}(\yhat),
%&&b_{2}(\qq;\xhat,\yhat)=\tt_{2}(\xhat)\cdot\big(\tt_{2}(\yhat)\big),\\
%&b_{3}(\qq;\xhat,\yhat)=-\tt_{1}(\xhat)\cdot\tt_{1}(\yhat),
%&&b_{4}(\qq;\xhat,\yhat)=-\tt_{1}(\xhat)\cdot\tt_{2}(\yhat),
%\end{align*}
%and 
%\begin{align*}
\mathcal{V}_{2}(\qq;\kappa,\xhat,\yhat)&=
\Big(\tt_{2}(\xhat)\cdot\big(\qq(\xhat)-\qq(\yhat)\big)\Big)\Big(\tt_{1}(\yhat)\cdot\big(\qq(\xhat)-\qq(\yhat)\big)\Big)%c_{1}(\qq;\xhat,\yhat)
\,\ee_{\theta}(\xhat)\otimes\ee_{\theta}(\yhat)\\
&+\Big(\tt_{2}(\xhat)\cdot\big(\qq(\xhat)-\qq(\yhat)\big)\Big)\Big(\tt_{2}(\yhat)\cdot\big(\qq(\xhat)-\qq(\yhat)\big)\Big)%+c_{2}(\qq;\xhat,\yhat)
\,\ee_{\theta}(\xhat)\otimes\ee_{\phi}(\yhat)\\
&-\Big(\tt_{1}(\xhat)\cdot\big(\qq(\xhat)-\qq(\yhat)\big)\Big)\Big(\tt_{1}(\yhat)\cdot\big(\qq(\xhat)-\qq(\yhat)\big)\Big)%+\;c_{3}(\qq;\xhat,\yhat)
\,\ee_{\phi}(\xhat)\otimes\ee_{\theta}(\yhat)\\
&-\Big(\tt_{1}(\xhat)\cdot\big(\qq(\xhat)-\qq(\yhat)\big)\Big)\Big(\tt_{2}(\yhat)\cdot\big(\qq(\xhat)-\qq(\yhat)\big)\Big)%+c_{4}(\qq;\xhat,\yhat)
\,\ee_{\phi}(\xhat)\otimes\ee_{\phi}(\yhat)\,.
\end{align*}
% where
% $$\begin{array}{lcl}c_{1}(\qq;\xhat,\yhat)&=&\Big(\tt_{2}(\xhat)\cdot\delta\qq_{\xhat,\yhat}\Big)\Big(\tt_{1}(\yhat)\cdot\delta\qq_{\xhat,\yhat}\Big),\vspace{1mm}
% \\c_{2}(\qq;\xhat,\yhat)&=&\Big(\tt_{2}(\xhat)\cdot\delta\qq_{\xhat,\yhat}\Big)\Big(\tt_{2}(\yhat)\cdot\delta\qq_{\xhat,\yhat}\Big),\vspace{1mm}
% \\c_{3}(\qq;\xhat,\yhat)&=&-\Big(\tt_{1}(\xhat)\cdot\delta\qq_{\xhat,\yhat}\Big)\Big(\tt_{1}(\yhat)\cdot\delta\qq_{\xhat,\yhat}\Big),\vspace{1mm}
% \\c_{4}(\qq;\xhat,\yhat)&=&-\Big(\tt_{1}(\xhat)\cdot\delta\qq_{\xhat,\yhat}\Big)\Big(\tt_{2}(\yhat)\cdot\delta\qq_{\xhat,\yhat}\Big).
% \end{array}$$
\medskip

Next, we introduce a change of coordinate system  in order to move all the singularities in the weakly singular integrals to  only one point that is chosen
to be the North pole. For $\xhat\in\S^2$ we consider an orthogonal transformation $T_{\xhat}$ which maps $\xhat$ onto the North pole denoted by  ${\nhat}$. If $\xhat=\xhat(\theta,\phi)$ then 
$T_{\xhat}:=P(\phi)Q(-\theta)P(-\phi)$ where $P$ and $Q$ are defined by 
$$P(\phi)=\left(\begin{matrix}\cos\phi&-\sin\phi&0\\\sin\phi&\cos\phi&0\\0&0&1\end{matrix}\right)
\quad\text{ and }\quad
Q(\theta)=\left(\begin{matrix}\cos\theta&0&\sin\phi\\0&1&0\\-\sin\phi&0&\cos\phi\end{matrix}\right).$$
We also introduce an induced linear tranformation $\mathcal{T}_{\xhat}$ defined by $\mathcal{T}_{\xhat}u(\yhat)=u(T_{\xhat}^{-1}\yhat)$  and we still denote by $\mathcal{T}_{\xhat}$ its bivariate analogue $\mathcal{T}_{\xhat}v(\yhat_{1},\yhat_{2})=v(T_{\xhat}^{-1}\yhat_{1},T_{\xhat}^{-1}\yhat_{2})$. If we write $\zhat=T_{\xhat}\,\yhat$ then we have the identity
$$|\xhat-\yhat|=|T_{\xhat}^{-1}(\nhat-\zhat)|=|\nhat-\zhat|.$$
The boundary integral operator $\mathcal{M}_{\kappa}$ can be rewritten in the form:
\begin{equation}\label{splitkernel}
\mathcal{M}_{\kappa}\,\js(\xhat)=\int_{\S^2}
\!\!\left(
\frac{\mathcal{T}_{\xhat}R(\qq;\nhat,\zhat)}{|\nhat-\zhat|}\mathcal{T}_{\xhat}\mathcal{M}_{1}(\qq;\kappa,\nhat,\zhat)\mathcal{T}_{\xhat}\js(\zhat)
+ i 
\mathcal{T}_{\xhat}\mathcal{M}_{2}(\qq;\kappa,\nhat,\zhat)\mathcal{T}_{\xhat}\js(\zhat)\!\right)ds(\zhat),
\end{equation}
and it can be shown that $(\theta',\phi')\mapsto \mathcal{T}_{\xhat}R(\qq;\nhat,\zhat(\theta',\phi'))\mathcal{T}_{\xhat}\mathcal{M}_{1}(\qq;\kappa,\nhat,\zhat(\theta',\phi'))$ is smooth.  An important point is that the  singularity
$\frac{1}{|\nhat-\zhat(\theta',\phi')|}=\frac{1}{2\sin\tfrac{\theta'}{2}}$
is cancelled out by the surface element $ds(\zhat)=\sin\theta' d\theta' d\phi'$. We proceed in the same way for the operator $(\kappae\mathcal{C}_{\kappae}-\kappai\mathcal{C}_{\kappai})$.\medskip

To solve the parametrized boundary integral equation systems we extend 
the spectral algorithm of Ganesh and Graham \cite{GaneshGraham}  to the vector case,  which ensures spectrally accurate convergence of the discrete solution for second kind scalar integral equations. With an alternative method this was done by
Ganesh and Hawkins \cite{GaneshHawkins3} for the perfect conductor problem.
For both of the boundary integral equation systems, it consists in the approximation of the (two) equations   in the subspace $\H_{n}\subset\HH^{-1/2}_{\Div}(\S^2)$ of finite dimension  $2(n+1)^2-2$ generated by the orthonormal basis of tangential vector spherical harmonics (see Appendix \ref{app:spherical_harmonics})   of degree $\leq n\in\N$.
% $$\begin{array}{rcl}\H_{n}&=&\displaystyle{\left\{\js=\sum_{l=1}^{n}\sum_{j=-l}^l\alpha_{l,j}\Y^{(1)}_{l,j}+ \beta_{l,j}\Y^{(2)}_{l,j};\;\alpha_{l,j},\beta_{l,j}\in \C \right\}}.\end{array}$$

 The numerical scheme  is based on a quadrature formula over the unit sphere of the
form
\begin{equation}\label{rule}\int_{\S^2}u(\xhat)ds(\xhat)
\approx\sum_{\rho=0}^{2n+1}\sum_{\tau=1}^{n+1}\mu_{\rho}\nu_{\tau}u(\xhat(\theta_{\tau},\phi_{\rho}))\,.
\end{equation}
Here $\theta_{\tau}=\arccos \zeta_{\tau}$ where $\zeta_{\tau}$, for $\tau=1,\hdots,n+1$, are the zeros of the Legendre polynomial $P^0_{n+1}$ of degree $n+1$ and $\nu_{\tau}$, for $\tau=1,\hdots,n+1$, are the corresponding Gauss-Legendre weights 
and $$\mu_{\rho}=\frac{\pi}{n+1},\quad\phi_{\rho}=\frac{\rho\pi}{n+1},\; \text{ for }\rho=0,\hdots,2n+1.$$
 The formula \eqref{rule} is exact for the spherical polynomials of order 
$\leq 2n+1$ (see \cite{Pieper}). Here and in the following we use the notation 
$\xhat_{\rho\tau}=\xhat(\theta_{\tau},\phi_{\rho})$.

 The main ingredient of the method is the fact that the scalar spherical harmonics (see  Appendix \ref{app:spherical_harmonics}) are eigenfunctions of the single layer potential on the sphere \cite{ColtonKress}: 
  \begin{equation}\label{ingredient}\int_{\S^2}\frac{1}{|\xhat-\yhat|}Y_{l,j}(\yhat)ds(\yhat)=\frac{4\pi}{2l+1}Y_{l,j}(\xhat), \quad\text{ for all }\xhat\in\S^2,\end{equation}
  and that we have for $l\geq1$
  $$\int_{\S^2}Y_{l,j}(\yhat)ds(\yhat)=0.$$
Let $\P_{n}$ denotes the space of all scalar spherical polynomials of degree  
$\leq n$.
We introduce a projection operator $\mathscr{L}_{n}$ onto $\P_{n}^3$ defined by 
$$\mathscr{L}_{n}\uu=\sum_{l=0}^n\sum_{j=-l}^l\begin{pmatrix}(u_{1},Y_{lj})_{n}\\(u_{2},Y_{lj})_{n}\\(u_{3},Y_{lj})_{n}\end{pmatrix}Y_{lj}
\quad \text{where}\quad 
(\varphi_{1},\varphi_{2})_{n}=\sum_{\rho=0}^{2n+1}\sum_{\tau=1}^{n+1}\mu_{\rho}\nu_{\tau}\varphi_{1}(\xhat_{\rho\tau})\overline{\varphi_{2}(\xhat_{\rho\tau})}
$$
and $\uu=\transposee{(u_{1},u_{2},u_{3})}$. Moreover, we introduce 
a projection operator $\L_{n}$ onto $\H_{n}$ by
$$\L_{n}\vv=\sum_{i=1}^2\sum_{l=1}^n\sum_{j=-l}^l(\vv|\Y^{(i)}_{lj})_{n}\,\Y^{(i)}_{lj}
\quad\text{where}\quad
(\vv_{1}|\vv_{2})_{n}=\sum_{\rho=0}^{2n+1}\sum_{\tau=1}^{n+1}\mu_{\rho}\nu_{\tau}\vv_{1}(\xhat_{\rho\tau})\cdot\overline{\vv_{2}(\xhat_{\rho\tau})}\,.
$$
In a first step, the operator $\mathcal{M}_{\kappa}^{\qq}$ is approximated by 
\begin{equation*}\begin{split}\mathcal{M}_{\kappa,n'}\,\js(\xhat)=&\int_{\S^2}\frac{1}{|\nhat-\zhat|}\mathscr{L}_{n'}\{\mathcal{T}_{\xhat}R(\qq;\nhat,\cdot)\mathcal{T}_{\xhat}\mathcal{M}_{1}(\qq;\kappa,\nhat,\cdot)\mathcal{T}_{\xhat}\js(\cdot)\}(\zhat)ds(\zhat)\\&+\;i\int_{\S^2}\mathscr{L}_{n'}\{\mathcal{T}_{\xhat}\mathcal{M}_{2}(\qq;\kappa,\nhat,\cdot)\mathcal{T}_{\xhat}\js(\cdot)\}(\zhat)ds(\zhat),\end{split}\end{equation*}
for some $n'=an+1$ with fixed $a>1$ and $n'-n>3$ (see  \cite[Theorem 3]{GaneshHawkins3}). By the use of \eqref{ingredient} and an additional identity for spherical harmonics \cite{ColtonKress} we obtain
\begin{equation}\label{discrete}
\begin{split}\mathcal{M}_{\kappa,n'}\,\js(\xhat)
=&\sum_{\rho'=0}^{2n'+1}\sum_{\tau'=1}^{n'+1}\mu_{\rho'}\nu_{\tau'}\alpha_{\tau'}\mathcal{T}_{\xhat}R(\qq;\nhat,\zhat_{\rho'\tau'})\mathcal{T}_{\xhat}\mathcal{M}_{1}(\qq;\kappa,\nhat,\zhat_{\rho'\tau'})\mathcal{T}_{\xhat}\js(\zhat_{\rho'\tau'})\\
&+\;i\sum_{\rho'=0}^{2n'+1}\sum_{\tau'=1}^{n'+1}\mu_{\rho'}\nu_{\tau'}\mathcal{T}_{\xhat}\mathcal{M}_{2}(\qq;\kappa,\nhat,\zhat_{\rho'\tau'})\mathcal{T}_{\xhat}\js(\zhat_{\rho'\tau'}),
\end{split}\end{equation}
where $\alpha_{\tau'}=\sum_{l=0}^{n'}P_{l}^{0}(\zeta_{\tau'}).$
We proceed in the same way for the operator $(\kappae\mathcal{C}_{\kappae}-\kappai\mathcal{C}_{\kappai})$ in order to obtain a first approximation denoted by $(\kappae\mathcal{C}_{\kappae}-\kappai\mathcal{C}_{\kappai})_{n'}$. 

In a second step, the boundary integral equations  are projected onto 
the space $\H_{n}$ by applying $\L_{n}$. Finally, the  systems 
\eqref{inteq1} and \eqref{inteq2} are discretized into 
$4\times\big((n+1)^2-1\big)$ equations for the 
$4\times\big((n+1)^2-1\big)$ unknown coefficients by applying 
the scalar product   $(\;\cdot\;| \Y^{(1)}_{lj})_{n}$ and 
$(\;\cdot\;| \Y^{(2)}_{lj})_{n}$, for $l=0,\hdots,n$ and 
$j=-l,\hdots,l$ to each equation.
%In a second step,  the operators $\mathcal{M}_{\kappae,n'}$, $\mathcal{M}_{\kappai,n'}$ and $(\kappae\mathcal{C}_{\kappae}-\kappai\mathcal{C}_{\kappai})_{n'}$  are approximated by the operators $\L_{n}\mathcal{M}_{\kappae,n'}$, $\L_{n}\mathcal{M}_{\kappai,n'}$ and $\L_{n}(\kappae\mathcal{C}_{\kappae}-\kappai\mathcal{C}_{\kappai})_{n'}$, respectively. The identity operator  is approximated by $\L_{n}$ and the right-hand sides are projected on the space $\H_{n}$ by applying $\L_{n}$. 
%The two unknowns are approximated by spherical polynomials of order less or equal to $n$, that is
%$$\sum_{l=0}^n\sum_{j=-l}^l\begin{pmatrix}c^1_{lj}\\c_{lj}^2\\c_{lj}^3\end{pmatrix}Y_{lj}, \text{ with }(c^\beta_{lj})_{l,j}\in\C^{(n+1)^2}\text{ for }\beta=1,2,3.$$
%%%%%%%%%%%%%%%%%%
\section{Numerical implementation and examples}
%+++++++++++++++++++++++++++
In this section we discuss the implementation of the numerical scheme described above and present some results to show the  accuracy of the method.

 In view of \eqref{discrete},  we need the following formula for $k=1,2$
$$\mathcal{T}_{\xhat_{\rho\tau}}\Y_{l,j}^{(k)}(\yhat_{\rho'\tau'})=\sum_{\tilde{j}=-l}^l F_{\tau l\tilde{j}j} e^{i(j-\tilde{j})\phi_{\rho}}\,T_{\xhat_{\rho\tau}}^{-1}\Y_{l,\tilde{j}}^{(k)}(\yhat_{\rho'\tau'})$$ with
$
F_{\tau l\tilde{j}j}=e^{i(j-\tilde{j})\frac{\pi}{2}}\sum_{|\tilde{l}|\leq l}d_{\tilde{j}\tilde{l}}^{l}\left(\frac{\pi}{2}\right)d_{j\tilde{l}}^{l}\left(\frac{\pi}{2}\right)e^{i\tilde{l}\theta_{\tau}}$ and
$d_{\tilde{j}\tilde{l}}^{l}\left(\frac{\pi}{2}\right)=2^j\sqrt{\frac{(l+|j|!)}{(l+|\tilde{l}|!)}\frac{(l-|j|!)}{(l-|\tilde{l}|!)}}\mathscr{P}_{l+j}^{\tilde{l}-j,-\tilde{l}-j}(0)
$.
Here $\mathscr{P}_{n}^{a,b}$ for $a,b\geq 0$ 
is the normalized Jacobi polynomial evaluated at zero given by 
$\mathscr{P}_{n}^{a,b}(0)=2^{-n}\sum_{t=0}^{n}(-1)^{t}
\smat{n+a\\n-t}
\smat{n+b\\t}.$ 
For $\tilde{l}-\tilde{j}<0$ or $-\tilde{l}-\tilde{j}<0$, we can compute $d_{\tilde{j}\tilde{l}}^{l}(\frac{\pi}{2})$ using the symmetry relations
$$d_{\tilde{j}\tilde{l}}^{l}(\beta)=(-1)^{\tilde{j}-\tilde{l}}d_{\tilde{l}\tilde{j}}^{l}(\beta)=d_{-\tilde{l}-\tilde{j}}^{l}(\beta)=d_{\tilde{l}\tilde{j}}^{l}(-\beta).$$
The discrete approximation of the  operator $\mathbf{M}_{\kappa}$ of   $\mathcal{M}_{\kappa}$ is of the form
$$\mathbf{M}_{\kappa}=\begin{pmatrix}\mathbf{M}_{1,1}&\mathbf{M}_{1,2}\\\mathbf{M}_{2,1}&\mathbf{M}_{2,2}\end{pmatrix},$$
where $\mathbf{M}_{a,b}$, for $a,b=1,2$ is a $\big((n+1)^2-1\big)\times\big((n+1)^2-1\big)$ matrix. 
The coefficients of $\mathbf{M}_{a,b}$,  for $1\leq l,l'\leq n$, $|j|\leq l$ and $|j'|\leq l'$ are given by $$\mathbf{M}^{ljl'j'}_{a,b}=(\L_{n}\mathcal{M}_{\kappa,n'}\Y^{(a)}_{l,j},\Y^{(b)}_{l'j'})_{n}.$$
 We denote $\yhat_{\rho'\tau'}^{\rho\tau}=T^{-1}_{\xhat_{\rho\tau}}(\yhat_{\rho'\tau'})$. The coefficient $\mathbf{M}^{ljl'j'}_{a,b}$ is computed via the following procedure:
 
 (i) \underline{step 1}: we set $\ee_{\theta_{\tau'}}^{\rho\tau} = T_{\xhat_{\rho\tau}}^{-1} \ee_{\theta_{\tau'}}$ and $\ee_{\phi_{\rho'}}^{\rho\tau} = T_{\xhat_{\rho\tau}}^{-1} \ee_{\phi_{\rho'}}$ and if $a=b=1$ we compute for $\tau=1,\hdots n+1$,  $\rho=0,\hdots 2n+1$, $\tau'=1,\hdots,n'+1$ and $\tilde{j}=-n,\hdots n$:
\begin{align*}
&E_{1}^{\rho\tau\tau'\tilde{j}}=\sum_{\rho'=1}^{2n'+1}\mu_{\rho'}\Big(\ee_{\theta_{\tau}}\cdot\boldsymbol{\mathcal{A}}_{k,n'}^{\rho\tau\rho'\tau'}\ee_{\theta_{\tau'}}^{\rho\tau}\Big)e^{i\tilde{j}\phi_{\rho'}},
&&E_{2}^{\rho\tau\tau'\tilde{j}}=\sum_{\rho'=1}^{2n'+1}\mu_{\rho'}\Big(\ee_{\theta_{\tau}}\cdot\boldsymbol{\mathcal{A}}_{k,n'}^{\rho\tau\rho'\tau'}\ee_{\theta_{\tau'}}^{\rho\tau}\Big)
\frac{i\tilde{j}e^{i\tilde{j}\phi_{\rho'}}}{\sin\theta_{\tau'}},\\
&E_{3}^{\rho\tau\tau'\tilde{j}}=\sum_{\rho'=1}^{2n'+1}\mu_{\rho'}\Big(\ee_{\phi_{\rho}}\cdot\boldsymbol{\mathcal{A}}_{k,n'}^{\rho\tau\rho'\tau'}\ee_{\phi_{\rho'}}^{\rho\tau}\Big)\frac{e^{i\tilde{j}\phi_{\rho'}}}{\sin\theta_{\tau}},
&&E_{4}^{\rho\tau\tau'\tilde{j}}=\sum_{\rho'=1}^{2n'+1}\mu_{\rho'}\Big(\ee_{\phi_{\rho}}\cdot\boldsymbol{\mathcal{A}}_{k,n'}^{\rho\tau\rho'\tau'}\ee_{\phi_{\rho'}}^{\rho\tau}\Big)
\frac{i\tilde{j}e^{i\tilde{j}\phi_{\rho'}}}{\sin\theta_{\tau}\sin\theta_{\tau'}},
\end{align*}
where $\boldsymbol{\mathcal{A}}_{k,n'}^{\rho\tau\rho'\tau'}$ is the matrix given by$$\boldsymbol{\mathcal{A}}_{k,n'}^{\rho\tau\rho'\tau'}=\big[\alpha_{\tau'}^{n'}R(\qq;\xhat_{\rho\tau},\yhat_{\rho'\tau'}^{\rho\tau})\mathcal{M}_{1}(\xhat_{\rho\tau},\yhat_{\rho'\tau'}^{\rho\tau})+\mathcal{M}_{2}(\xhat_{\rho\tau},\yhat_{\rho'\tau'}^{\rho\tau})\big].$$
If $a=2$ we replace, here above, $\ee_{\theta_{\tau}}$ by $-\ee_{\phi_{\rho}}$ and $\ee_{\phi_{\rho}}$ by $\ee_{\theta_{\tau}}$ and when $b=2$ we replace $\ee^{\rho\tau}_{\theta_{\tau'}}$ by $-\ee^{\rho\tau}_{\phi_{\rho'}}$ and $\ee^{\rho\tau}_{\phi_{\rho'}}$ by $\ee^{\rho\tau}_{\theta_{\tau'}}$.\medskip

(ii) \underline{step 2}: we compute for $\tau=1,\hdots n+1$, $l=1,\hdots,n$,  $\rho=0,\hdots 2n+1$ and $\tilde{j}=-n,\hdots n$
\begin{align*}
D_{1}^{\rho\tau l\tilde{j}}&=\sum_{\tau'=1}^{n'+1}\nu_{\tau'}\gamma_{l}^{\tilde{j}}\left[\frac{\partial P_{l}^{|\tilde{j}|}(\cos(\theta_{\tau'}))}{\partial \theta_{\tau'}}E_{1}^{\rho\tau\tau'\tilde{j}}+P_{l}^{|\tilde{j}|}(\cos(\theta_{\tau'}))E_{2}^{\rho\tau\tau'\tilde{j}}\right]\,,\\
D_{2}^{\rho\tau l\tilde{j}}&=\sum_{\tau'=1}^{n'+1}\nu_{\tau'}\gamma_{l}^{\tilde{j}}\left[\frac{\partial P_{l}^{|\tilde{j}|}(\cos(\theta_{\tau'}))}{\partial \theta_{\tau'}}E_{3}^{\rho\tau\tau'\tilde{j}}+P_{l}^{|\tilde{j}|}(\cos(\theta_{\tau'}))E_{4}^{\rho\tau\tau'\tilde{j}}\right]
\end{align*}
where
$\gamma_{l}^{\tilde{j}}=(-1)^{\frac{|j|+j}{2}}\sqrt{\frac{2l+1}{4\pi l(l+1)}\frac{(l-|\tilde{j}|!)}{(l+|\tilde{j}|!)}}.$

(iii) \underline{step 3}: we compute for $\tau=1,\hdots n+1$, $l=1,\hdots,n$,  $\rho=0,\hdots 2n+1$, $j=-n,\hdots n$ and $j'=-n,\hdots n$
\begin{align*}
&C_{1}^{\rho\tau lj}=\sum_{|\tilde{j}|\leq l}F_{\tau l\tilde{j}j}D_{1}^{\rho\tau l\tilde{j}}e^{i(j-\tilde{j})\phi_{\rho}}\,, 
&& B_{1}^{\tau j'lj}=\sum_{\rho=0}^{2n+1}\mu_{\rho}C_{1}^{\rho\tau lj}e^{-ij'\phi_{\rho}}\,,\\
&C_{2}^{\rho\tau lj}=\sum_{|\tilde{j}|\leq l}F_{\tau l\tilde{j}j}D_{2}^{\rho\tau l\tilde{j}}e^{i(j-\tilde{j})\phi_{\rho}}\,,
&& B_{2}^{\tau j'lj}=\sum_{\rho=0}^{2n+1}\mu_{\rho}C_{2}^{\rho\tau lj}(-ij')e^{-ij'\phi_{\rho}}\,.
\end{align*}
(iv) \underline{step 4}: we compute for $l'=1,\hdots n$, $j'=-n,\hdots n$, $l=1,\hdots,n$  and $j=-n,\hdots n$
$$\mathbf{M}^{ljl'j'}_{a,b}=\sum_{\tau=1}^{n+1}\nu_{\tau}\gamma_{l'}^{j'}\left[\frac{\partial P_{l'}^{|j'|}(\cos\theta_{\tau})}{\partial \theta_{\tau}}B_{1}^{\tau j'lj}+P_{l'}^{|j'|}(\cos\theta_{\tau})B_{2}^{\tau j'lj}\right]\,.$$
We use the same procedure to implement the discrete approximation $(\kappae\mathbf{C}_{\kappae}-\kappai\mathbf{C}_{\kappai})$ of the operator $(\kappae\mathcal{C}_{\kappae}-\kappai\mathcal{C}_{\kappai})$.

The discrete approximations of the operators $K_{\rm DM}$ and $K_{\rm IM}$ in 
\eqref{eq:directmethod} and \eqref{eq:indirectmethod} are given by
\begin{align*}
\mathbf{K}_{\rm DM}&=\begin{pmatrix}\left(1+\frac{\mue\kappai^2}{\mui\kappae^2}\right)&0\\0&\left(1+\frac{\mui}{\mue}\right)\end{pmatrix}+\begin{pmatrix}\mathbf{M}_{\kappae}-\frac{\mue\kappai^2}{\mui\kappae^2}\mathbf{M}_{\kappai}&\frac{\mue}{\kappae^2}(\kappae\mathbf{C}_{\kappae}-\kappai\mathbf{C}_{\kappai})\\\frac{1}{\mue}(\kappae\mathbf{C}_{\kappae}-\kappai\mathbf{C}_{\kappai})&\mathbf{M}_{\kappae}-\frac{\mui}{\mue}\mathbf{M}_{\kappai}\end{pmatrix}\,,\\
\mathbf{K}_{\rm IM}&=\begin{pmatrix}
\left(1+\frac{\mue\kappai^2}{\mui\kappae^2}\right)&0\\0&\left(1+\frac{\mui}{\mue}\right)
\end{pmatrix}-\begin{pmatrix}
\mathbf{M}_{\kappae}-\frac{\mue\kappai^2}{\mui\kappae^2}\mathbf{M}_{\kappai}
&\frac{1}{\mue}(\kappae\mathbf{C}_{\kappae}-\kappai\mathbf{C}_{\kappai})\\
\frac{\mue}{\kappae^2}(\kappae\mathbf{C}_{\kappae}-\kappai\mathbf{C}_{\kappai})
&\mathbf{M}_{\kappae}-\frac{\mui}{\mue}\mathbf{M}_{\kappai}
\end{pmatrix}\,.
\end{align*}

The discrete approximation of the right-hand side $2\transposee{(\gg,\ff)}$ of one  the boundary integral equation 
system is the vector $2\transposee{(\mathbf{g},\mathbf{f})}= 
2\transposee{(\mathbf{g}_{1},\mathbf{g}_{2},\mathbf{f}_{1},\mathbf{f}_{2})}$  
whose  coefficients are given by
$$\mathbf{g}_{k}^{lj}=(\L_{n}\big(\Pq\gg\big)|\Y^{(k)}_{lj})_{n}, \quad \text{ and}\quad 
\mathbf{f}_{k}^{lj}=(\L_{n}\big(\Pq\ff\big)|\Y^{(k)}_{lj})_{n}$$
for $k=1,2$, $l=1,\hdots,n$ and $j=-l,\hdots,l$.

The parametrized form of the far field operator $\FFop$  is 
$\pFFop\smat{\js\\\ms}=\pFFop_{1}\,\js+\pFFop_{2}\,\ms$
with
\begin{align*}
\left(\pFFop_{1}\js\right)(\xhat)&=\frac{i\kappae}{4\pi}\int_{\S^2}e^{-i\kappae\xhat\cdot \qq(\yhat)}
\xhat\times\big[\big(\ee_{\theta}(\yhat)\cdot\js(\yhat)\big)\tt_{1}(\yhat) +
\big(\ee_{\phi}(\yhat)\cdot\js(\yhat)\big)\tt_{2}(\yhat)\big]ds(\yhat)
\,,\\
\left(\pFFop_{2}\ms\right)(\xhat)&=\frac{\mue}{4\pi}\int_{\S^2}e^{-i\kappae\xhat\cdot \qq(\yhat)}
\xhat\times\big[\big(\ee_{\theta}(\yhat)\cdot\ms(\yhat)\big)\tt_{1}(\yhat)
+\big(\ee_{\phi}(\yhat)\cdot\ms(\yhat)\big)\tt_{2}(\yhat)\big]\times\xhat\,ds(\yhat)
\,.
%\end{split}\end{equation*}
%and
%\begin{equation*}\begin{split}
\end{align*}
%\end{split}\end{equation*}
The discrete approximation of $\pFFop$ evaluated at the $2(n_{\infty}+1)^2$ Gauss-quadrature points on the unit far sphere is
\[
\dFFop=\begin{pmatrix}\dFFop_{1}&\dFFop_{2}\end{pmatrix}
\qquad\text{with}\qquad 
\dFFop_{a}=\begin{pmatrix}\dFFop_{a,1}&\dFFop_{a,2}\end{pmatrix},\quad a\in\{1,2\}
%\dFFop_{2}=\begin{pmatrix}\dFFop_{2,1}&\dFFop_{2,2}\end{pmatrix}
\]
where $\dFFop_{a,b}$, for $a,b=1,2$  is a $6(n_{\infty}+1)^2\times((n+1)^2-1)$ matrix.
The coefficients of $\dFFop_{a,b}$,  for $a,b=1,2$, $1\leq l'\leq n$, $|j'|\leq l'$ and $\rho=0,\hdots,2n_{\infty}+1$ and $\tau=1,\hdots,n_{\infty}+1$ are given by
\begin{align*}
\dFFop^{\rho\tau l'j'}_{1,b}=&\left(\pFFop_{2}
\Y^{(b)}_{l'j'}\right)(\xhat_{\rho\tau})
=\dfrac{i\kappae}{4\pi}\sum\limits_{\rho'=0}^{2n+1}\sum\limits_{\tau'=1}^{n+1}
\mu_{\rho'}\nu_{\tau'}e^{-i\kappae\xhat_{\rho\tau}\cdot \qq(\yhat_{\rho'\tau'})}\\
&\xhat_{\rho\tau}\times\big[\big(\ee_{\theta_{\tau'}}\cdot\Y_{l'j'}^{(b)}(\yhat_{\rho'\tau'}) \big) \tt_{1}(\yhat_{\rho'\tau'})
+ \big(\ee_{\phi_{\rho'}}\cdot\Y_{l'j'}^{(b)}(\yhat_{\rho'\tau'})\big) \tt_{2}(\yhat_{\rho'\tau'})\big]
\,,\\
\dFFop^{\rho\tau l'j'}_{2,b}=&
\left(\pFFop_{1}\Y^{(b)}_{l'j'}\right)(\xhat_{\rho\tau})
= \dfrac{\mue}{4\pi}
\sum\limits_{\rho'=0}^{2n+1}\sum\limits_{\tau'=1}^{n+1}
\mu_{\rho'}\nu_{\tau'}e^{-i\kappae\xhat_{\rho\tau}\cdot \qq(\yhat_{\rho'\tau'})}\\
&\xhat_{\rho\tau}\times\big[\big(\ee_{\theta_{\tau'}}\cdot\Y_{l'j'}^{(b)}(\yhat_{\rho'\tau'}) \big)\tt_{1}(\yhat_{\rho'\tau'})
+\big(\ee_{\phi_{\rho'}}\cdot\Y_{l'j'}^{(b)}(\yhat_{\rho'\tau'}) \big)\tt_{2}(\yhat_{\rho'\tau'})\Big]\times\xhat_{\rho\tau}
\,.
\end{align*}

Table \ref{table2} exhibits fast convergence for the far field pattern $\EE^{\infty}$ for analytic dielectric boundaries with parametric representations given in Table \ref{table1}. 
In each of the following examples we take $\kappai=2\kappae$ and $\mui=2\mue$.
As a first test we compute the electric far field denoted, $\EE^{\infty}_{\text{ps}}$, created by an 
off-center point source  located inside the dielectric: 
$$\EE^{\rm inc}(\xb)=\grad\FS{\kappae}{\xb-\s}\times\pp, \quad \s\in\Omega\text{ and }\pp\in\S^2\,.
$$ 
In this case the total exterior wave has to vanish so that  the far field pattern of the scattered wave $\EE^{\rm s}$ is the opposite of the far field pattern of the incident wave:
$$\EE^{\infty}_{\text{exact}}(\xhat)=-\frac{i\kappae}{4\pi}e^{-i\kappae\xhat\cdot \s}\;(\xhat\times\pp).$$
We choose $\s=\transposee{\big(0,\tfrac{0.1}{\sqrt{2}},-\tfrac{0.1}{\sqrt{2}}\big)}$ and 
$\pp=\transposee{(1,0,0)}$. In Table \ref{table2} we list the $L^{\infty}$ error (by taking the maximum of 
errors obtained over 1300 observed directions, i.e.\ $n_{\infty}=25$) 
%$$||[\EE^{\infty}_{\text{ps}}]_{n}-\EE^{\infty}_{\text{exact}}||_{\infty}=\max_{\xhat\in\S^2}\big|[\EE^{\infty}_{\text{ps}}]_{n}-\EE^{\infty}_{\text{exact}}\big|.$$

 As a second example we compute the electric far field denoted, $\EE^{\infty}_{\text{pw}}$, created by the scattering of an incident plane wave: 
 $$\EE^{\rm inc}(\xb)=\pp\,e^{i\kappae \xb\cdot \dd},\quad \text{where }\dd,\pp\in \S^2\text{ and }\dd\cdot\pp=0.$$
In the tabulated results we show the real part and the imaginary part of the polarization component of the electric far field evaluated at the incident direction:
$[\EE^{\infty}_{\text{pl}}(\dd)]_{n}\cdot\pp$. We chose $\s=\transposee{\big(0,0,1\big)}$ and $\pp=\transposee{(1,0,0)}$.

\begin{table}[t]\footnotesize
\caption{\footnotesize Parametric representation of the dielectric interfaces \cite{GaneshGraham,Onaka}}
% title of Table
\centering
% used for centering table
\begin{tabular}{c c}
% centered columns (4 columns)
\hline\hline
%inserts double horizontal lines
surface & parametric representation \\ [0.5ex]
% inserts table
%heading
\hline \\[-1.5ex]
% inserts single horizontal line
%sphere & $q\circ\psi(\theta,\phi)=R\psi(\theta,\phi)$, $R\in\R_{+}^*$;  \vspace{2mm}\\
% inserting body of the table
%ellipsoid & $q\circ\psi(\theta,\phi)=\transposee{(A\sin\theta\cos\phi,B\sin\theta\sin\phi,C\cos\theta)}$, $A,B,C\in\R_{+}^*$;  \vspace{2mm}\\
peanut & $q\circ\psi(\theta,\phi)=r(\theta)\transposee{(\sin\theta\cos\phi,2\sin\theta\sin\phi,\cos\theta)}$, \\
%$r(\theta)=R(4(1+\sqrt{1+\alpha}))^{-\frac{1}{2}}(\cos(2\theta)+\sqrt{\alpha+\cos^2(2\theta)})^{\frac{1}{2}}$, $\alpha,R\in\R_{+}^{*}$; 
&$r(\theta)=(1+\sqrt{2})^{-\frac{1}{2}}\big(\cos(2\theta)+\sqrt{1+\cos^2(2\theta)}\big)^{\frac{1}{2}}$; 
\vspace{2mm} \\ 
%bean & $q\circ\psi(\theta,\phi)=R\transposee{(A(\theta)\sin\theta\cos\phi,B(\theta)\sin\theta\sin\phi-.3C(\theta),\cos\theta)}$, $R\in\R_{+}^*$,\\ & $A(\theta)=\sqrt{.64(1-.1C(\theta))}$, $B(\theta)=\sqrt{.64(1-.4C(\theta))}$, $C(\theta)=\cos(\pi\cos\theta)$;\vspace{2mm} \\
%rounded rectangle & $q\circ\psi(\theta,\phi)=r(\theta,\phi)\transposee{(A\sin\theta\cos\phi,B\sin\theta\sin\phi,C\cos\theta)}$, $A,B,C\in\R_{+}^*$,\\ & $r(\theta,\phi)=(|\sin\theta\cos\phi|^{p}+|\sin\theta\sin\phi|^{p}+|\cos\theta|^{p})^{-\frac{1}{p}}$, $p\in\N$, $p>2$;\vspace{2mm} \\ 
rounded tetrahedron & $q\circ\psi(\theta,\phi)=r(\theta,\phi)\psi(\theta,\phi)$, $r(\theta,\phi)=(H(\gamma,\gamma,\gamma)
+5^{-3}H(-\gamma,-\gamma,-\gamma))^{-\frac{1}{5}}$,\\
%+(\tfrac{1}{5})^{p-2}H(-\gamma,-\gamma,-\gamma))^{-\frac{1}{p}}$,\\
&$H(\gamma,\gamma,\gamma)=h(\gamma,\gamma,\gamma)^{p}+h(-\gamma,-\gamma,\gamma)^{p}+h(-\gamma,\gamma,-\gamma)^{p}+h(\gamma,-\gamma,-\gamma)^{p}$,\\&$h(a,b,c)=|\min(0,a\sin\theta\cos\phi+b\sin\theta\sin\phi+c\cos\theta)|$, $\gamma=\tfrac{1}{\sqrt{3}}$.%, $p\in\N$, $p>2$.
\\ [1ex]
% [1ex] adds vertical space
\hline
%inserts single line
\end{tabular}
\label{table1}
% is used to refer this table in the text
\end{table}

\begin{table}[ht]\footnotesize
\caption{\footnotesize Convergence of the forward solver for the dielectric scattering problem. The second column displays the error of the far field pattern for an interior 
point source, and the last two columns display point values of the far field pattern for 
a plane incident wave.}
% title of Table
\centering
% used for centering table
\begin{tabular}{c c c c c}
% centered columns (4 columns)
\hline\hline \\[-2.5ex]
%inserts double horizontal lines
surface & $n$ & $||[\EE^{\infty}_{\text{ps}}]_{n}-\EE^{\infty}_{\text{exact}}||_{\infty}$ & $\Re[\EE^{\infty}_{\text{pw}}(\dd)]_{n}\cdot\pp$ & $\Im[\EE^{\infty}_{\text{pw}}(\dd)]_{n}\cdot\pp$ \\ [0.5ex]
% inserts table
%heading
\hline \\[-1.5ex]
% inserts single horizontal line
%sphere & 5 & $0.4604$ & $-4.321\,341\,928$ & $1.940\,387\,297$
%\\$R=1$& 10 & $1.7335{\rm E}-06$  &$-0.959\,791\,777$ & $3.924\,848\,940$ 
%\\ $\kappae=2\pi$, $\mue=1$& 15 & $4.4605{\rm E}-14$ & $-1.016\,930\,853$ & $3.980\,865\,926$
%\\& 20 & $6.8447{\rm E}-14$ & $-1.016\,930\,855$ &  $3.980\,865\,925$\vspace{2mm}\\
%ellipsoid & 5 & $3.6451{\rm E}-03$ & $0.687\,650\,629$ & $0.236\,519\,503$ 
%\\$A=1$, $B=.75$, $C=.5$& 10 & $5.6981{\rm E}-05$  & $0.687\,650\,822$ & $0.236\,519\,392$
%\\$\kappae=\frac{\pi}{2}$, $\mue=1$& 15 & $6.8655{\rm E}-07$ & $0.687\,650\,822$& $0.236\,519\,392$
%\\& 20 & $7.4403{\rm E}-09$ & $0.687\,650\,822$& $0.236\,519\,392$\vspace{2mm}\\
peanut & 5 &  $2.0487{\rm E}-03$ & $0.932\,867\,728$ & $0.397\,947\,302$
\\$\kappae=\frac{\pi}{2}$, $\mue=1$& 10 & $4.2497{\rm E}-05$ & $0.928\,030\,045$ & $0.389\,280\,158$
\\& 15 & $2.5742{\rm E}-07$ & $0.928\,047\,903$ & $0.389\,255\,789$
\\& 20 & $1.9720{\rm E}-09$  & $0.928\,048\,382$ & $0.389\,255\,828$\vspace{2mm} \\ 
bean & 10 & $2.8156{\rm E}-03$  & $1.105\,343\,837$ & $0.638\,710\,973$ 
\\$R=1$&  15 & $3.0993{\rm E}-04$  & $1.105\,314\,530$ & $0.638\,637\,998$
\\$\kappae=\frac{\pi}{2}$, $\mue=1$& 20 & $4.3103{\rm E}-05$  & $1.105\,314\,829$ & $0.638\,637\,840$
\\ & 25 & $9.5110{\rm E}-06$ & $1.105\,314\,827$ & $0.638\,637\,840$\vspace{2mm} \\
%rounded cube & 10 & $5.6452{\rm E}-05$  & $0.706\,167\,282$  & $0.116\,488\,514$  
%\\ $p=5$ & 15 & $1.4388{\rm E}-06$  & $0.705\,598\,335$   & $0.117\,102\,678$ 
%\\ $\kappae=\frac{\pi}{4}$, $\mue=1$& 20 & $7.5877{\rm E}-08$ & $0.705\,623\,652$  & $0.117\,089\,549$
%\\ & 25 & $4.7729{\rm E}-09$ & $0.705\,623\,634$& $0.117\,089\,261$\vspace{2mm} \\ 
rounded & 10 & $2.7042{\rm E}-04$  & $1.279\,476\,494$ & $0.386\,849\,086$
\\ tetrahedron& 15 & $2.7724{\rm E}-05$ & $1.287\,913\,912$ & $0.386\,574\,373$ 
\\$\kappae=\frac{\pi}{4}$, $\mue=1$& 20 & $4.8104{\rm E}-06$ & $1.287\,492\,784$ & $0.386\,693\,985$ 
\\ & 25 & $5.1661{\rm E}-07$&  $1.287\,532\,182$ & $0.386\,615\,646$
\\ [1ex]
% [1ex] adds vertical space
\hline
%inserts single line
\end{tabular}
\label{table2}
% is used to refer this table in the text
\end{table}

  %%%%%%%%%%%%%%%%%%%%%%%%%%%%%%
 \section{Operator formulations and IRGNM}
 %+++++++++++++++++++++++++++++++++++++
 
To make the operator formulation \eqref{IP} in the introduction precise, we first have to introduce
a set of admissible parametrizations $\mathcal{V}$ which form an open subset of a Hilbert 
space $\mathcal{X}$. As in the introduction, let $\F_k:\mathcal{V}\to \LL^2_{\tang}(\S^2)$, $k=1,\dots,m$ denote
the operator which maps a parametrization $\qq\in \mathcal{V}$ of a boundary $\Gamma$ to the far field 
pattern $\EE^{\infty}_k$ corresponding to the incident field $\EE^{\rm inc}_k$. These operators
may be combined into one operator $\F:\mathcal{V}\to \LL^2_{\tang}(\S^2)^m$, 
$\F(\qq):=(\F_1(\qq),\dots,\F_m(\qq))^\top$. 
We also combine the measured far field patterns into a vector
$\EE^{\infty}_\delta:=(\EE^{\infty}_{1,\delta},\dots, \EE^{\infty}_{m,\delta})^\top\in \LL^2_{\tang}(\S^2)^m$ 
such that the inverse problem can be written as
\begin{equation}\label{eq:IP2}
\F(\qq) = \EE^{\infty}_{\delta}\,.
\end{equation}
To compute an approximate solution to \eqref{eq:IP2} we use the Iteratively Regularized Gau{\ss}-Newton Method
(IRGNM).  To apply this method we show in section \ref{sec:Fder}
that the operator $\F$ is Fr\'echet differentiable and derive formulas to evaluate the Fr\'echet derivative $\F'[\qq]$ 
and its adjoint $\F'[\qq]^*$. Then the iterates of the IRGNM can be computed by
\begin{equation}\label{eq:IRGNM}
\qq_{N+1}^{\delta}:= \operatorname{argmin}\limits_{\qq\in \mathcal{X}} 
\left[\|\F(\qq_N^{\delta})+\F'[\qq_N^{\delta}](\qq-\qq_N^{\delta})-\EE^{\infty}_{\delta}\|_{\LL^2_{\tang}(\S^2)^m}^2
+ \alpha_{N}\|\qq-\qq_0\|^2\right]\,.
\end{equation}
Here $\qq_0=\qq_0^{\delta}$ is some initial guess (in our numerical experiments we always chose the unit sphere),
and the regularization parameters are chosen of the form $\alpha_{N}=\alpha_0 \paren{\frac{2}{3}}^N$. 
Since the objective functional in \eqref{eq:IRGNM} is quadratic and strictly convex, the first order
optimality conditions are necessary and sufficient, and the updates $(\partial\qq)_{N}:=\qq_{N+1}^{\delta}-\qq_N^{\delta}$
are the unique solutions to the linear equations
\begin{equation}\label{eq:IRGNMlineq}
\Big(\alpha_{N}\Id+\sum\limits_{k=1}^m\adjoint{\F_k'[\qq^\delta_{N}]}\F'_{k}[\qq^\delta_{N}]\Big)
(\partial\qq)_{N}^{\delta}=
\sum\limits_{k=1}^m\adjoint{\F'_{k}[\qq^\delta_{N}]}\big(\EE^{\infty}_{k,\delta}-\F_{k}(\qq^\delta_{N})\big)
+\alpha_{N}\big(\qq^{\delta}_{0}-\qq_{N}^{\delta}\big).
\end{equation}

\medskip
It remains to describe the choice of the set of admissible parametrizations $\mathcal{V}$ and the underlying
Hilbert space $\mathcal{X}$. A rather general way to parametrize a boundary $\Gamma$ is 
to choose some reference domain $\Omega_{\rm ref}$ with boundary $\Gammaref$  and consider mappings
$\qq:\Gammaref\to \Gamma$ belonging to
\begin{equation}\label{eq:gen_param}
\parset:=\left\{\qq\in H^s(\Gammaref,\R^3):\qq \mbox{ injective}, \det(D\qq(\xhat))\neq 0\mbox{ for all }\xhat\in\Gammaref\right\}\,.
\end{equation}
This will be convenient for describing Fr\'echet derivatives of $\F$ in section \ref{sec:Fder}. 
For $s>2$ the set $\parset$ is open in $\mathcal{X}:=  H^s(\Gammaref,\R^3)$ and $\mathcal{X}\subset  \mathscr{C}^1(\Gammaref,\R^3)$. 
If $\Gamma$ and $\Gammaref$ are sufficiently smooth, $\Gamma$ has a parametrization in $\parset$ 
if and only if $\Gamma$ and $\Gammaref$ have the same genus. 
%and we consider  variations generated by transformations of the form $\xb\mapsto \qq(\xb)$
%    of point $\xb$ in the space $\R^3$, where $\qq$ is a smooth vector function defined in a neighborhood of $\Gammaref$. The functions $(\qq-\Id)$ are assumed to be  sufficiently small  elements of the Banach space $\mathscr{C}^{2}(\Gamma,\R^3)$ in order that $\qq$ is a diffeomorphism from $\Gammaref$ to  $\Gamma_{\qq}=\left\{\qq(\xb); \;\xb\in\Gammaref\right\},$ so that the surface $\Gamma_{\qq}$ is still a smooth boundary  of a  domain $\Omega_{\qq}$ diffeomorphic to the ball of $\R^3$. 
%We define the set of admissible variations
%$$\parset=\{\qq\in \mathscr{C}^2(\Gammaref,\R^3);\; \Gamma_{\qq} \text{ is diffeomorphic to } \S^2\}.$$
%The mapping $\F(\dd,\pp\,;\cdot):\qq\in\mathcal{V}_{ad}\mapsto \FF(\dd,\pp\,;\Gamma_{\qq})\in\LL^2_{\tang}(S^2)$ is well defined and  we can use the standard differential calculus tools to describe its mapping properties. For the sake of simplicity we will write $\F[\qq]$ instead of $\F(\dd,\pp\,;\qq)$ when this is not open to misinterpretation.

A disadvantage of the choice \eqref{eq:gen_param} is that a given interface $\Gamma$ has many parametrizations in $\parset$. 
In the important special case that $\Gamma$ is star-shaped with respect to the origin, we can choose
$\Gammaref=\S^2$ and consider special parametrizations of the form 
\[
\qq= \mathcal{R}r   \qquad\mbox{with}\qquad
(\mathcal{R}r)(\xhat):=r(\xhat)\xhat,\quad\xhat\in\S^2
\] 
with a function $r:\S^2\to (0,\infty)$. Then the function $r$ is uniquely determined by $\Gamma$. 
In this case we choose the underlying Hilbert space $\mathcal{X}_{\rm star}:= H^s(\S^2,\R)$ with $s>2$ and
the set of admissible parametrizations by
\[
\parset_{\rm star}:=\{r\in\mathcal{X}_{\rm star}: r>0\}\,.
\]
As $\mathcal{R}(\parset_{\rm star})\subset \parset$,  we can define
$\F_{\rm star}:\parset_{\rm star}\to \LL^2_{\tang}(\S^2)^m$ by
\[
\F_{\rm star}:=\F\circ \mathcal{R}\,.
\]
Then $\F_{\rm star}$ is injective if a star-shaped interface $\Gamma$ is uniquely determined by  
the far field data $\EE^{\infty}_1,\dots,\EE^{\infty}_m$. 

  %%%%%%%%%%%%%%%%%%%%%%%%%%%%%%
\section{ The Fr\'echet derivative and its adjoint}\label{sec:Fder}
 %+++++++++++++++++++++++++++++++++++++

  %To solve numerically the inverse problem \eqref{IP} we need, first of all, to analyze the differentiability properties  of the boundary to far field operator 
  %$$\FF(\dd,\pp\,;\cdot):\Gamma\mapsto \EE^{\infty}\in\LL^2_{\tang}(S^2)=\{\hh\in\LL^2(\S^2);\;\hh(\xhat)\cdot\xhat=0\},$$
   %for any $\dd,\pp\in\S^2$ with $\dd\cdot\pp=0$.  

In this section we assume that the set $\parset$ of admissible parametrizations in chosen by \eqref{eq:gen_param} 
with some reference boundary $\Gammaref$.
For $\qq\in\parset$ we define $\Gamma_{\qq}:=\qq(\Gammaref)$ and denote  
by $\nn_{\qq}$ the exterior unit normal vector to $\Gamma_{\qq}$. More generally we will label all quantities 
and operators related to the dielectric scattering problem for the interface $\Gamma_{\qq}$ by the index $\qq$. 
We restrict our discussion to the case $m=1$ since the general case can be reduced to this special case
by the obvious formulas $\F'[\qq]\xi = (\F'_1[\qq]\xi,\dots,\F'_m[\qq]\xi)^\top$ and 
$\F'[\qq]^*\hh=\sum_{k=1}^m \F_k'[\qq]^*\hh$.

The following theorem was established in \cite{CostabelLeLouer2}. An alternative proof can be found in \cite{Hettlich2}.

\begin{theorem}[characterization of $\F'{[\qq]}$]\label{theo:Fprime}
The mapping $\F:\parset\rightarrow \LL^2_{\tang}(S^2)$ with $s>2$ 
is Fr\'echet differentiable at all $\qq\in\parset$ for which $\Gamma_{\qq}$ is of class $\mathscr{C}^2$, 
and  the  first derivative at $\qq$   in the direction $\ki\in\mathcal{X}$ 
is given by $$\F'[\qq]\ki=\EE^{\infty}_{\qq,\ki},$$
where $\EE^{\infty}_{\qq,\ki}$ is the far field pattern of the solution $(\EE^{\rm i}_{\qq,\ki},\EE^{\rm s}_{\qq,\ki})$ to the Maxwell equations \eqref{ME} in $\R^3\backslash\Gamma_{\qq}$ that satisfies the Silver-M\"uller radiation condition and the transmissions condition 
$$\left\{\begin{array}{c}\nn_{\qq}\times\EE^{\rm s}_{\qq,\ki}-\nn_{\qq}\times\EE^{\rm i}_{\qq,\ki}=\ff'_{\qq,\ki},\vspace{1mm}\\
\frac{1}{\mue}\nn_{\qq}\times\Rot\EE^{\rm s}_{\qq,\ki}-\frac{1}{\mui}\nn_{\qq}\times\Rot\EE^{\rm i}_{\qq,\ki}=\gg'_{\qq,\ki},\end{array}\right.$$
where
\begin{align*}
\ff'_{\qq,\ki}=&-\left(\ki\!\circ\!\qq^{-1}\!\cdot\nn_{\qq}\right)
\Big\{\nn_{\qq}\times\Rot(\EE_{\qq}^{\rm s}+\EE^{\rm inc})\times\nn_{\qq}
-\nn_{\qq}\times\Rot\EE_{\qq}^{\rm i}\times\nn_{\qq}\Big\}\\
&+\Rot_{\Gamma_{\qq}}\Big((\ki\!\circ\!\qq^{-1}\!\cdot\nn_{\qq})
\big(\nn_{\qq}\cdot(\EE_{\qq}^{\rm s}
+\EE^{\rm inc})-\nn_{\qq}\cdot\EE_{\qq}^{\rm i}\big)\Big),\\
\gg'_{\qq,\ki}=&-\left(\ki\!\circ\!\qq^{-1}\!\cdot\nn_{\qq}\right)
\Big\{\frac{\kappae^2}{\mue} \nn_{\qq}\times(\EE_{\qq}^{\rm s}+\EE^{\rm inc})\times\nn_{\qq}
-\frac{\kappai^2}{\mui}\big(\nn_{\qq}\times\EE^{\rm i}_{\qq}\big)\times\nn_{\qq}\Big\}\\
&+\Rot_{\Gamma_{\qq}}\left((\ki\!\circ\!\qq^{-1}\!\cdot\nn_{\qq})
\left\{\frac{1}{\mue}\nn_{\qq}\cdot\Rot(\EE_{\qq}^{\rm s}+\EE^{\rm inc})
-\frac{1}{\mui}\nn_{\qq}\cdot\Rot\EE^{\rm i}_{\qq}\right\}\right)
\end{align*}
on $\Gamma_{\qq}$ where $(\EE_{\qq}^{\rm i},\EE_{\qq}^{\rm s})$ is the solution of the dielectric scattering problem \eqref{ME}-\eqref{T3} with the interface $\Gamma_{\qq}$ 
and we have used Notation \ref{not:triple_cross}.
\end{theorem}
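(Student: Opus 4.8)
The plan is to combine M\"uller's second-kind integral equation \eqref{eq:directmethod} with the transport to a fixed reference surface used in Section~3 (take $\Gammaref=\S^2$). Fix $\qq_{0}\in\parset$ such that $\Gamma_{\qq_{0}}$ is of class $\mathscr{C}^{2}$. For $\qq$ in a neighbourhood of $\qq_{0}$, the Piola transform $\Pq$ of $\qq\colon\Gammaref\to\Gamma_{\qq}$ turns the M\"uller system on $\Gamma_{\qq}$ into an equation $\mathbb{A}(\qq)\,\ww_{\qq}=\boldsymbol{b}(\qq)$ on the \emph{fixed} space $\big(\HH^{-1/2}_{\Div}(\Gammaref)\big)^{2}$, where $\mathbb{A}(\qq)=\Pq\,K_{\rm DM}^{\qq}\,\Pq^{-1}$, $\ww_{\qq}=\Pq(\uu^{\rm s}_{\qq}+\uu^{\rm inc})$, and $\boldsymbol{b}(\qq)=2\,\Pq\uu^{\rm inc}$ is assembled from $\uu^{\rm inc}$ traced on $\Gamma_{\qq}$. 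By M\"uller's uniqueness theorem $\mathbb{A}(\qq)$ is boundedly invertible, and the far field is recovered as $\F(\qq)=\FFop^{\qq}\Pq^{-1}\ww_{\qq}$ with the transported far-field operator of Section~3. Hence it suffices to show that $\qq\mapsto\mathbb{A}(\qq)$, $\qq\mapsto\boldsymbol{b}(\qq)$ and $\qq\mapsto\FFop^{\qq}\Pq^{-1}$ are Fr\'echet differentiable into the corresponding spaces of bounded operators: since $\ww_{\qq}=\mathbb{A}(\qq)^{-1}\boldsymbol{b}(\qq)$ and operator inversion is real-analytic on the open set of invertible operators, $\qq\mapsto\ww_{\qq}$ and therefore $\F$ are Fr\'echet differentiable by the chain and product rules.

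The technical heart is the differentiability of the transported operators $\Pq M_{\kappa}\Pq^{-1}$ and $\Pq C_{\kappa}\Pq^{-1}$, i.e.\ of the operators $\mathcal{M}_{\kappa},\mathcal{C}_{\kappa}$ of Section~3 regarded as functions of $\qq$. Their kernels depend on $\qq$ only through the real-analytic quantities $\yhat\mapsto\qq(\yhat)$, $[\D\qq]$, $J_{\qq}$, and through $\FS{\kappa}{\qq(\xhat)-\qq(\yhat)}$. First I would use the splitting of these kernels into a smooth part and a weakly singular part of pseudo-homogeneous type --- exactly the splitting into the matrices $\mathcal{M}_{1},\mathcal{M}_{2}$ and $\mathcal{C}_{1},\mathcal{C}_{2}$ of Section~3 --- and check that each differentiation in $\qq$ produces a kernel of the same class, the worst additional factor $|\qq(\xhat)-\qq(\yhat)|^{-1}$ being absorbed by the surface measure just as in Section~3. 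This preserves boundedness on $\HH^{-1/2}_{\Div}(\Gammaref)$, and iterating gives $\mathscr{C}^{\infty}$ (in fact real-analytic) dependence on $\qq$; the hypothesis $\Gamma_{\qq}\in\mathscr{C}^{2}$ enters here, since curvature terms already appear in the first two derivatives. For $\boldsymbol{b}(\qq)$ and $\FFop^{\qq}$ there is no singularity, and differentiability is immediate from the analyticity of $\EE^{\rm inc}$ and of $\qq\mapsto e^{-i\kappae\,\xhat\cdot\qq(\yhat)}$. These kernel estimates are precisely what is carried out in detail in \cite{CostabelLeLouer1,CostabelLeLouer2}.

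It then remains to identify the derivative. Differentiating $\mathbb{A}(\qq)\ww_{\qq}=\boldsymbol{b}(\qq)$ in a direction $\ki\in\mathcal{X}$ shows that $\ww'_{\qq,\ki}$ solves a M\"uller system whose right-hand side has to be recognised; equivalently, the potentials generated by the pulled-back densities define fields $\EE^{\rm i}_{\qq,\ki},\EE^{\rm s}_{\qq,\ki}$ that solve \eqref{ME} off $\Gamma_{\qq}$ together with the Silver--M\"uller condition, so only the jumps across $\Gamma_{\qq}$ are at issue. To obtain these I would differentiate the transmission conditions \eqref{T1} for the interface $\Gamma_{\qq}$ using the shape calculus of tangential and twisted-tangential traces on a moving surface and the formula $\partial_{\ki}\nn_{\qq}=-\grad_{\Gamma_{\qq}}h$ for the variation of the normal, where $h:=\ki\!\circ\!\qq^{-1}\!\cdot\nn_{\qq}$ is the normal speed: the shape derivative of a trace $\nn_{\qq}\times\EE$ on $\Gamma_{\qq}$ combines the (twisted) trace of the field's shape derivative $\EE_{\qq,\ki}$, a term $-h$ times the tangential part $\nn_{\qq}\times\Rot\EE\times\nn_{\qq}$ of $\Rot\EE$ --- the second normal derivatives having been reduced via $\Rot\Rot\EE=\kappa^{2}\EE$ and $\Div\EE=0$, which is the origin of the factors $\kappae^{2},\kappai^{2}$ in $\gg'_{\qq,\ki}$ --- and a surface term $\Rot_{\Gamma_{\qq}}\big(h\,(\nn_{\qq}\cdot\EE)\big)$; only the normal part of $\ki$ survives because a tangential $\ki$ merely reparametrizes $\Gamma_{\qq}$. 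Subtracting the interior contribution from the exterior one in each transmission condition and collecting terms yields exactly $\ff'_{\qq,\ki}$ and $\gg'_{\qq,\ki}$. Finally $\F'[\qq]\ki=\EE^{\infty}_{\qq,\ki}$, because the far-field map is continuous on radiating Maxwell solutions with respect to the local $\HH_{\loc}(\Rot,\Omegae)$ topology, in which $\EE^{\rm s}_{\qq,\ki}$ is the limit of the difference quotients of $\EE^{\rm s}_{\qq}$.

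The main obstacle is this last step: the interplay between the tangential-trace shape calculus on the moving surface and the reduction of second-order normal derivatives through the Maxwell equations, which is exactly what produces the somewhat intricate data $\ff'_{\qq,\ki},\gg'_{\qq,\ki}$ and where sign and curvature terms are easy to get wrong. An alternative route, taken in \cite{Hettlich2}, is to carry out the analogous differentiation at the level of the variational formulation on a fixed domain obtained through a family of diffeomorphisms of $\R^{3}$ extending $\qq$; this trades the singular-kernel estimates for the analysis of a $\qq$-dependent sesquilinear form, but leaves one with the same identification problem.
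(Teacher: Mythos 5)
You should first note that the paper itself contains no proof of Theorem \ref{theo:Fprime}: it is quoted with the remark that it "was established in \cite{CostabelLeLouer2}" with an alternative proof in \cite{Hettlich2}. So your sketch can only be measured against those references, and in that comparison it follows essentially the same boundary-integral route as \cite{CostabelLeLouer2,FLL}: transport the second-kind system to a fixed reference surface, prove smooth dependence of the transported operators on $\qq$, differentiate the equation, and identify the derivative as the far field of a new transmission problem whose jump data come from differentiating the transmission conditions (variation of the normal $-\grad_{\Gamma_{\qq}}(\ki\circ\qq^{-1}\cdot\nn_{\qq})$, elimination of second normal derivatives through $\Rot\Rot\EE=\kappa^2\EE$ and $\Div\EE=0$, invariance under tangential perturbations). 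The variational alternative you mention at the end is exactly \cite{Hettlich2}. In that sense the proposal is a faithful outline of the cited proof rather than a new argument.

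Two points deserve sharpening before this could count as a proof. First, the differentiability of the transported operators on the energy space is the genuinely delicate step, and it is precisely why \cite{CostabelLeLouer2} works with the transformation that intertwines the Hodge decomposition \eqref{eq:Hodge_decomp} rather than with the plain Piola transform $\Pq$ (the paper itself points this out after \eqref{eq:Hodge_decomp}). Your "same class of kernels, the extra $|\qq(\xhat)-\qq(\yhat)|^{-1}$ is absorbed by the surface measure" argument controls mapping properties into $\HH^{s}_{\tang}$, but it does not by itself show that the $\qq$-derivatives of $\Pq K^{\qq}_{\rm DM}\Pq^{-1}$ still map into $\HH^{-1/2}_{\Div}(\S^2)^2$, i.e.\ that the surface divergence of the differentiated operators stays under control; this is where the argument could fail if one used the Calder\'on-type operators with the full $C_{\kappa}$ instead of the M\"uller combination. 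For the M\"uller system it is rescuable because every $\qq$-dependent block ($M_{\kappa}$ and the difference $\kappae C_{\kappae}-\kappai C_{\kappai}$) is weakly singular, hence smoothing of order one, so its derivatives land in $\HH^{1/2}_{\tang}(\S^2)\subset\HH^{-1/2}_{\Div}(\S^2)$ — but that observation has to be made explicitly, and one must also pass from G\^ateaux to Fr\'echet differentiability with respect to $\qq\in H^s(\Gammaref,\R^3)$, $s>2$, by continuity of the derivative. Second, the identification of $\ff'_{\qq,\ki}$ and $\gg'_{\qq,\ki}$ is only described qualitatively ("collect terms"); since the whole content of the theorem is these formulas, a complete proof would have to carry out that computation, which is exactly the part of \cite{CostabelLeLouer2} (or \cite{Hettlich2}) that cannot be compressed into a remark.
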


\begin{remark}[alternative form of boundary values]\label{remark}
{\rm  By straightforward calculations and the use of the transmission conditions, 
one can express the boundary values of the Fr\'echet derivative in terms of the 
solution to the system of integral equations \eqref{eq:directmethod} of the direct approach, i.e. 
\begin{equation}\label{eq:defi_u}
\begin{pmatrix}\uu^1_{\qq} \\ \uu^2_{\qq} \end{pmatrix}
= \uu^{\rm s}_{\qq}+\uu^{\rm inc} 
= \begin{pmatrix}
\nn_{\qq}\times(\EE_{\qq}^{\rm s}+\EE^{\rm inc})\\
\tfrac{1}{\mue}\nn_{\qq}\times\Rot(\EE_{\qq}^{\rm s}+\EE^{\rm inc})
\end{pmatrix}. 
\end{equation}
First we note that
$(\EE_{\qq}^{\rm s}+\EE^{\rm inc})=\frac{1}{\kappae^2}\Rot\Rot(\EE_{\qq}^{\rm s}+\EE^{\rm inc})\text{ and }\EE_{\qq}^{\rm i}=\frac{1}{\kappai^2}\Rot\Rot\EE^{\rm i}_{\qq}.$
Moreover, using the identity (see \eqref{eq:normal_curl}, \eqref{eq:n_cross_diffop}) 
\begin{equation}\label{eq:id_scurl_sdiv}
\nn_{\qq}\cdot \Rot \EE
= \rot_{\Gamma_{\qq}}(\nn_{\qq}\times \EE \times \nn_{\qq})
= - \Div_{\Gamma_{\qq}}(\nn_{\qq}\times \EE)\qquad \mbox{on }\Gamma_{\qq}\,,
\end{equation}
%\begin{itemize}
%\item[(i)]  $\rot_{\Gamma_{\qq}} {\bf u}|_{\Gamma_{\qq}}=\nn_{\qq}\cdot(\Rot {\bf u})|_{\Gamma_{\qq}}$ 
%for any vector function ${\bf u}$ in a neighborhood of $\Gamma$ with ${\bf u}|_{\Gamma_{\qq}}\cdot \nn_{\qq}=0$,
%\item[(ii)] $\rot_{\Gamma_{\qq}}(\varphi_{\qq}\nn_{\qq})=0$ for any scalar function $\varphi_{\qq}$ defined on %$\Gamma_{\qq}$, 
%\item[(iii)] and $\rot_{\Gamma_{\qq}}\jj=-\Div_{\Gamma_{\qq}}(\nn_{\qq}\times \jj)$ for any tangential vector %density 
%$\jj$ on $\Gamma_{\qq}$.
%\end{itemize} 
which holds for any smooth vector function $\EE$ defined on a neighborhood of $\Gamma_{\qq}$, we obtain 
\begin{subequations}\label{eqs:BC}
\begin{align}\label{BC1}
\ff'_{\qq,\ki}=&-\left(\ki\!\circ\!\qq^{-1}\!\cdot\nn_{\qq}\right)(\mue-\mui)
\uu^{(2)}_{\qq}\times\nn_{\qq}
-\left(\frac{\mue}{\kappae^2}-\frac{\mui}{\kappai^{2}}\right)
\Rot_{\Gamma_{\qq}}\left((\ki\!\circ\!\qq^{-1}\!\cdot\nn_{\qq})\Div_{\Gamma_{\qq}}
\uu^{(2)}_{\qq}\right)\\
\label{BC2}\gg'_{\qq,\ki}=&-\left(\ki\!\circ\!\qq^{-1}\!\cdot\nn_{\qq}\right)
\left(\frac{\kappae^2}{\mue}-\frac{\kappai^2}{\mui}\right)
\uu^{(1)}_{\qq}\times\nn_{\qq}
-\left(\frac{1}{\mue}-\frac{1}{\mui}\right)\Rot_{\Gamma_{\qq}}
\paren{(\ki\!\circ\!\qq^{-1}\!\cdot\nn_{\qq})\Div_{\Gamma_{\qq}}\uu^{(1)}_{\qq}}\,.
\end{align}
\end{subequations}
An interesting feature of these formulas is that they makes appear the contrasts between the interior and exterior values of the dielectric constants. 
}
\end{remark}

To define the adjoint of $F'[\qq]:\mathcal{X}=H^s(\Gammaref;\R^3)\to \LL^2_{\tang}(\S^2)$, 
we interpret the naturally complex Hilbert space $\LL^2_{\tang}(\S^2)$ as a real Hilbert space 
with the real-valued inner product $\Re \langle \cdot,\cdot\rangle_{\LL^2_{\tang}(\S^2)}$. 
For bounded linear operator between complex Hilbert spaces such a reinterpretation of the
spaces as real Hilbert spaces does not change the adjoint.

\begin{proposition}[characterization of the adjoint $\F'{[\qq]}^*$]\label{prop:adjoint}
Let 
$$\EE^{\rm inc}_{\hh}(\yb):=\frac{\mue}{4\pi}\int_{S^2}e^{-i\kappae\xhat\cdot \yb}\hh(\xhat)\;ds(\xhat),
\qquad \yb\in\R^3
$$ 
denote the vector Herglotz function with kernel $\hh\in\LL^2_{\tang}(\S^2)$ and 
$\EE_{\qq,\bar{\hh}}$ the total  wave solution to the scattering problem for the dielectric interface 
$\Gamma_{\qq}$ and the incident wave $\EE^{\rm inc}_{\bar{\hh}}$. Moreover, let $j_{\mathcal{X}\hookrightarrow \LL^2}$ 
denote the embedding operator from $\mathcal{X}= H^s(\Gammaref,\R^3)$ to $L^2(\Gammaref,\R^3)$. Then
\begin{align*}
\Adjoint{\F'[\qq]}\hh=j_{\mathcal{X}\hookrightarrow \LL^2}^*
\bigg(J_{\qq}\cdot\bigg(\nn_{\qq}\Re\bigg\{
&-(\mue-\mui)\Big(\frac{1}{\mue}\nn_{\qq}\times\Rot\overline{\EE_{\qq,\bar{\hh}}}\Big)
\cdot\overline{\uu^{(2)}_{\qq}}\\
&+\left(\frac{\mue}{\kappae^2}-\frac{\mui}{\kappai^{2}}\right)
\Div_{\Gamma_{\qq}}\Big(\frac{1}{\mue}\nn_{\qq}\times\Rot\overline{\EE_{\qq,\bar{\hh}}}\Big)
\cdot\Div_{\Gamma_{\qq}}\overline{\uu^{(2)}_{\qq}}\\
&-\left(\frac{\kappae^2}{\mue}-\frac{\kappai^2}{\mui}\right)
\Big(\nn_{\qq}\times\overline{\EE_{\qq,\bar{\hh}}}\Big)
\cdot\overline{\uu^{(1)}_{\qq}}\\
& +\left(\tfrac{1}{\mue}-\tfrac{1}{\mui}\right)
\Div_{\Gamma_{\qq}}\Big(\nn_{\qq}\times\overline{\EE_{\qq,\bar{\hh}}}\Big)
\cdot\Div_{\Gamma_{\qq}}\overline{\uu^{(1)}_{\qq}}
\bigg\}\bigg)\circ\qq\bigg)\;.
\end{align*}
\end{proposition}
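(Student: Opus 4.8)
The strategy is to compute $\Adjoint{\F'[\qq]}$ in two stages. First, by the chain rule and the characterization of $\F'[\qq]$ in Theorem \ref{theo:Fprime} together with Remark \ref{remark}, the derivative $\F'[\qq]\ki$ factors as $\ki\mapsto (\ki\circ\qq^{-1}\cdot\nn_{\qq})=:\sigma$, a scalar density on $\Gamma_{\qq}$, followed by a linear map $\sigma\mapsto \EE^{\infty}$ determined by the boundary data \eqref{BC1}--\eqref{BC2}. Thus $\Adjoint{\F'[\qq]}$ is the composition of the adjoint of the second map (acting on Herglotz kernels) with the adjoint of the multiplication-and-pullback map $\ki\mapsto\sigma$. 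The latter adjoint produces exactly the factor $j^*_{\mathcal{X}\hookrightarrow\LL^2}\big((J_{\qq}\,\nn_{\qq}\,(\cdot))\circ\qq\big)$: the pullback $\qq^{-1}$ combined with the change-of-variables Jacobian $J_{\qq}$ turns an $L^2(\Gamma_{\qq})$ pairing into an $L^2(\Gammaref)$ pairing, the dot product with $\nn_{\qq}$ is transposed into multiplication by $\nn_{\qq}$, and finally $j^*_{\mathcal{X}\hookrightarrow\LL^2}$ accounts for the fact that $\mathcal{X}=H^s$ rather than $L^2$. So the whole problem reduces to identifying the scalar function on $\Gamma_{\qq}$ appearing inside $\{\cdots\}$ as the adjoint applied to $\hh$ of the map sending $\sigma$ to the far field $\EE^{\infty}$.

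\medskip
For that core identity the plan is to use a far-field reciprocity (mixed reciprocity) argument in the spirit of \cite{Hohage}. Introduce the Herglotz incident wave $\EE^{\rm inc}_{\bar\hh}$ with kernel $\bar\hh$ and let $\EE_{\qq,\bar\hh}=(\EE^{\rm i}_{\qq,\bar\hh},\EE^{\rm s}_{\qq,\bar\hh}+\EE^{\rm inc}_{\bar\hh})$ be the corresponding total field for the interface $\Gamma_{\qq}$. The key point is that pairing the far field $\EE^{\infty}_{\qq,\ki}$ of the derivative against $\hh$ in $\LL^2_{\tang}(\S^2)$ equals, by the definition of the far field operator $\FFop$ and Fubini, a boundary integral over $\Gamma_{\qq}$ of the jump data $\ff'_{\qq,\ki},\gg'_{\qq,\ki}$ against the traces of $\EE^{\rm inc}_{\bar\hh}$; then a Green's-identity / integration-by-parts manipulation, using that both $\EE_{\qq,\ki}$ (the derivative field) and $\EE_{\qq,\bar\hh}$ solve the homogeneous transmission problem with radiation condition, converts this into a pairing of the jump data against the traces of the total field $\EE_{\qq,\bar\hh}$ on $\Gamma_{\qq}$. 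Concretely one writes $\Re\langle \EE^{\infty}_{\qq,\ki},\hh\rangle$ as a bilinear boundary pairing of $(\ff'_{\qq,\ki},\gg'_{\qq,\ki})$ with the Cauchy data of $\EE_{\qq,\bar\hh}$, substitutes the explicit formulas \eqref{BC1}--\eqref{BC2} for $\ff'_{\qq,\ki}$ and $\gg'_{\qq,\ki}$, and integrates the $\Rot_{\Gamma_{\qq}}$ terms by parts (moving $\Rot_{\Gamma_{\qq}}$ onto the trace of $\EE_{\qq,\bar\hh}$, which by \eqref{eq:id_scurl_sdiv} becomes a $\Div_{\Gamma_{\qq}}$ acting on the appropriate tangential trace). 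Collecting the coefficients of $\sigma=\ki\circ\qq^{-1}\cdot\nn_{\qq}$ gives precisely the four-term expression in braces, with the contrasts $\mue-\mui$, $\tfrac{\mue}{\kappae^2}-\tfrac{\mui}{\kappai^2}$, $\tfrac{\kappae^2}{\mue}-\tfrac{\kappai^2}{\mui}$, $\tfrac1{\mue}-\tfrac1{\mui}$ matching those in \eqref{BC1}--\eqref{BC2}, and with $\overline{\uu^{(1)}_{\qq}},\overline{\uu^{(2)}_{\qq}}$ arising because the real pairing $\Re\langle\cdot,\cdot\rangle$ conjugates one factor.

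\medskip
A few technical points need care along the way. One must justify the duality pairing between $\HH^{-1/2}_{\Div}(\Gamma_{\qq})$ (where the jump data $\ff'_{\qq,\ki},\gg'_{\qq,\ki}$ and the densities $\uu^{(i)}_{\qq}$ live) and the corresponding trace space of the smooth Herglotz field, and verify the integration-by-parts formula for $\Rot_{\Gamma_{\qq}}$ against these regularities; since $\EE_{\qq,\bar\hh}$ is real-analytic up to $\Gamma_{\qq}$ (Herglotz waves and the transmission solution are smooth near the interface for $\Gamma_{\qq}$ of class $\mathscr{C}^2$), all pairings are classical. One also has to be careful that the "far field reciprocity" is the correct one: the adjoint of $\hh\mapsto\EE^{\rm inc}_{\hh}$ (Herglotz operator) is essentially the far-field operator $\FFop$ up to conjugation, which is why the kernel $\bar\hh$ rather than $\hh$ appears in $\EE_{\qq,\bar\hh}$, and why $\overline{\EE_{\qq,\bar\hh}}$ (not $\EE_{\qq,\bar\hh}$) appears in the final formula. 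Finally one re-expresses $\nn_{\qq}\times\Rot(\EE^{\rm s}_{\qq}+\EE^{\rm inc})$ in terms of $\uu^{(2)}_{\qq}$ and $\nn_{\qq}\times(\EE^{\rm s}_{\qq}+\EE^{\rm inc})$ in terms of $\uu^{(1)}_{\qq}$ using \eqref{eq:defi_u}, and likewise writes $\tfrac1{\mue}\nn_{\qq}\times\Rot\overline{\EE_{\qq,\bar\hh}}$ for the magnetic trace of the adjoint field, to arrive at the stated form.

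\medskip
\textbf{Main obstacle.} The delicate step is the integration-by-parts/reciprocity identity that trades the trace data of the derivative field for the trace data of the adjoint (Herglotz-generated) field: one must set up the correct Green-type formula for the Maxwell transmission problem relating two solutions with different jump data, keep track of the interior vs.\ exterior contributions and the $1/\mui$ vs.\ $1/\mue$ weights in the conormal derivatives, and handle the surface integration by parts for the $\Rot_{\Gamma_{\qq}}$ terms at $\HH^{-1/2}$ regularity. Once that identity is in place, matching the four contrast terms and the conjugations is bookkeeping, and the outer factor $j^*_{\mathcal{X}\hookrightarrow\LL^2}\big((J_{\qq}\nn_{\qq}\,\cdot)\circ\qq\big)$ follows directly from the chain rule as described above.
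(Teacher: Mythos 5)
Your plan is sound and reaches the stated formula, but it proves the key middle identity by a genuinely different route than the paper. You and the paper agree on the outer structure: factor $\F'[\qq]$ into the boundary-data map (your $\ki\mapsto\sigma\mapsto(\gg'_{\qq,\ki},\ff'_{\qq,\ki})$, the paper's $B^{\qq}$) followed by the jump-data-to-far-field map $A^{\qq}$, and compute the adjoint of the first factor by the surface integration by parts \eqref{dualrot}, the change of variables with $J_{\qq}$, and $j_{\mathcal{X}\hookrightarrow \LL^2}^*$ — this is exactly the paper's steps 1 and 3. Where you diverge is the identification of $(A^{\qq})^*_{\LL^2}\hh$ with the (conjugated) Cauchy data of the total field $\EE_{\qq,\bar{\hh}}$: you propose a PDE-level mixed-reciprocity/Green's-identity argument (in the spirit of Kress--Pa\"iv\"arinta and the acoustic far-field identities), whereas the paper never touches Green's theorem here — it computes the adjoint of the far-field operator $\FFop^{\qq}$ explicitly (giving tangential traces of $\overline{\EE^{\rm inc}_{\bar{\hh}}}$), passes from $((K_{\rm IM}^{\qq})^{-1})^*$ to $(K_{\rm DM}^{\qq})^{-1}$ via the transposition relation \eqref{eq:transposed} of Remark \ref{rem:transposed}, and then reads off the total-field traces directly from the direct-method equation \eqref{inteq1}. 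The paper's algebraic route is shorter, stays entirely within the second-kind integral-equation framework (which is also what makes the adjoint cheap to implement, reusing the LU factorization), and sidesteps the delicate bookkeeping you flag: in your route the contribution at infinity does not simply vanish, since $\overline{\EE_{\qq,\bar{\hh}}}$ contains an anti-radiating part, and it is precisely the surviving term that produces the far-field pairing; also your intermediate claim that Fubini alone yields ``jump data against traces of $\EE^{\rm inc}_{\bar{\hh}}$'' is not accurate — Fubini gives the Cauchy data of the radiating derivative field (or the layer densities) against the Herglotz traces, and the jump data $(\ff'_{\qq,\ki},\gg'_{\qq,\ki})$ only appear after the Green/transmission-condition manipulation you describe next. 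With that correction, your approach is a valid alternative; its advantage is that it is intrinsic to the transmission problem and independent of the particular boundary integral formulation, while the paper's argument buys brevity and a formula that is already in discretization-ready form.
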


\begin{proof} The proof consists of three steps:\\
\emph{1. factorization of $F'[\qq]$ and $F'[\qq]^*$:}
Due to Theorem \ref{theo:Fprime} and Remark \ref{remark} $F'[\qq]$ has a factorization 
\[\F'[\qq]\ki=A^{\qq}B^{\qq}\ki \qquad\mbox{where}\qquad
B^{\qq}\ki:=\begin{pmatrix}B_{1}^{\qq}\ki \\ B_{2}^{\qq}\ki
\end{pmatrix}
:=\begin{pmatrix}\gg'_{\qq,\ki}\\ \ff'_{\qq,\ki}\end{pmatrix}
\]
with $\ff'_{\qq,\ki}$ and $\gg'_{\qq,\ki}$ defined in \eqref{eqs:BC} and
$A^{\qq}$ maps the  boundary values $\smat{\gg_{\qq}\\ \fq }$
%\in\big(\HH^{s-\frac{1}{2}}_{\Div}(\Gamma_{\qq})\big)^2$ 
onto the far field pattern  of the transmission problem \eqref{ME}-\eqref{T1generique}-\eqref{T3} at the interface $\Gamma_{\qq}$, i.e.\ $A^{\qq}:=2\FFop^{\qq} (K_{\rm IM}^{\qq})^{-1}$. 

Let us denote by $(A^{\qq})_{\LL^2}^*$ and $(B^{\qq})_{\LL^2}^*$ the adjoints of $A^{\qq}$ and $B^{\qq}$ with
respect to the $L^2$ inner products. ($B^{\qq}$ is obviously unbounded and not everywhere defined 
from $L^2(\Gammaref,\R^3)$ to $\LL^2_{\tang}(\Gamma_{\qq})^2$, but well-defined on $H^1(\Gammaref,\R^3)$.
Moreover, $B^{\qq}(H^s(\Gammaref,\R^3))\subset \HH^{-1/2}_{\Div}(\Gamma)^2\cap \LL^2_{\tang}(\Gamma)^2$ 
for $s>2$.) Therefore, the adjoint of $F'[\qq]$ has the factorization
\[
F'[\qq]^*\hh = j_{\mathcal{X}\hookrightarrow \LL^2}^* (B^{\qq})_{\LL^2}^*(A^{\qq})_{\LL^2}^*\hh\,,
\] 
and it remains to characterize ($A^{\qq})_{\LL^2}^*$ and $(B^{\qq})_{\LL^2}^*$. \\
\emph{2. characterization of $(A^{\qq})_{\LL^2}^*$:} Let us introduce the operator 
$\FFop_0^{\qq}:\LL^2_{\tang}(\Gamma_{\qq})\to \LL^2_{\tang}(\S^2)$ by 
$\paren{\FFop_{0}^{\qq}\mm}(\xhat):=\frac{\mue}{4\pi}\xhat\times\int_{\Gamma}e^{-i\kappae\xhat\cdot \yb}
\mm(\yb)\,ds(\yb)\times\xhat\,.$ Then
\[
\paren{(\FFop_0^{\qq})^*\hh}(\yb) 
= \frac{\mue}{4\pi} \nn_{\qq}(\yb)\times \int_{\S^2}e^{i\kappae\xhat\cdot \yb} \hh(\xhat)\,ds(\xhat)
\times \nn_{\qq}(\yb)
= \nn_{\qq}(\yb)\times\overline{\EE^{\rm inc}_{\bar{\hh}}(\yb)}\times \nn_{\qq}(\yb)\,.
\]
As
$\big(\FFop^{\qq}\smat{\jj\\\mm}\big)(\xhat)=\frac{i\kappae}{\mue}\xhat\times (\FFop_{0}^{\qq}\jj)(\xhat)
+(\FFop_{0}^{\qq}\mm)(\xhat)$ 
%where 
%\begin{eqnarray*}
%\paren{\FFop_{1}\mm}(\xhat)&=&\frac{\mue}{4\pi}\int_{\Gamma}e^{-i\kappae\xhat\cdot \yb}\Big(\big(\xhat\times\mm(\yb)\big)\times\xhat\Big)ds(\yb)\,,\\
%\paren{\FFop_{2}\jj}(\xhat)&=&\frac{i\kappae}{4\pi}\int_{\Gamma}e^{-i\kappae\xhat\cdot \yb}\big(\xhat\times\jj(\yb)\big)ds(\yb)\,.
%\end{eqnarray*}
%Therefore, the $\LL^{2}$-adjoint opertor of $\FFop$ is given by 
we obtain
$$\paren{\FFop^{\qq}}_{\LL^2}^*\hh%=\begin{pmatrix}\adjoint{\FFop}_{1}\hh\\\adjoint{\FFop}_{2}\hh\end{pmatrix}
=\begin{pmatrix}
\nn_{\qq}\times\frac{1}{\mue}\Rot\overline{\EE^{\rm inc}_{\overline{\hh}}}\times\nn_{\qq} \\
\nn_{\qq}\times\overline{\EE^{\rm inc}_{\overline{\hh}}}\times\nn_{\qq}
\end{pmatrix}\,.
$$
Therefore, using Remark \ref{rem:transposed} to pass from 
$K_{\rm IM}$ to $K_{\rm DM}$, it follows that 
\begin{align*}
2\overline{(A^{\qq})_{\LL^2}^*\hh}
&= 2\overline{((K_{\rm IM}^{\qq})^{-1})^*(\FFop^{\qq})_{\LL^2}^*\hh}
= 2((K_{\rm IM}^{\qq})^\top)^{-1} \overline{(\FFop^{\qq})_{\LL^2}^*\hh} \\
&= \begin{pmatrix}\nn_{\qq}\times & 0 \\ 0 &\nn_{\qq}\times\end{pmatrix}
(K_{\rm DM}^{\qq})^{-1}
\begin{pmatrix} 2\EE^{\rm inc}_{\overline{\hh}}\times \nn_{\qq} \\
\frac{2}{\mue} \Rot\EE^{\rm inc}_{\overline{\hh}}\times \nn_{\qq}
\end{pmatrix} 
=\begin{pmatrix}\frac{1}{\mue}\nn_{\qq}\times\Rot\EE_{\qq,\bar{\hh}}\times\nn_{\qq}\vspace{2mm}\\ \nn_{\qq}\times\EE_{\qq,\bar{\hh}}\times\nn_{\qq}\end{pmatrix}
\end{align*}
where we have used \eqref{inteq1} in the last line. \\
\emph{3. characterization of $(B^{\qq})_{\LL^2}^*$:}
It will be convenient to compute
$(B^{\qq})_{\LL^2}^*\smat{\gg_1\times\nn_{\qq}\\ \gg_2\times\nn_{\qq}}
=(B_{1}^{\qq})_{\LL^2}^*(\gg_1\times\nn_{\qq})+(B_{2}^{\qq})_{\LL^2}^*(\gg_2\times\nn_{\qq}).$
For $B_{1}^{\qq}$ using the integration by part formula \eqref{dualrot}  we obtain
\begin{align*}
\Re\left\langle \gg_1\times\nn_{\qq},B_1^{\qq}\ki\right\rangle_{\LL^2_{\tang}(\Gamma_{\qq})}
= \int_{\Gamma_{\qq}}(\ki\!\circ\!\qq^{-1}\!\cdot \nn_{\qq})\Re\Big\{&-
\paren{\tfrac{\kappae^2}{\mue}-\tfrac{\kappai^2}{\mui}} 
\Big(\gg_1\times\nn_{\qq}\Big)\cdot\Big(\overline{\uu^{(1)}_{\qq}}\times\nn_{\qq}\Big)\\
&- \left(\tfrac{1}{\mue}-\tfrac{1}{\mui}\right)
\rot_{\Gamma_{\qq}}(\gg_1\times\nn_{\qq})
\cdot\Div_{\Gamma_{\qq}} \overline{\uu^{(1)}_{\qq}}\Big\}\,ds\,.
\end{align*}
Together with the transformation formula $\int_{\Gamma_{\qq}} f\, ds 
= \int_{\Gamma_{\rm ref}} (f\circ q) J_q\, ds$ and the identities
$({\bf a}\times \nn)\cdot ({\bf b}\times \nn)
=(\nn\times {\bf a}\times \nn) \cdot {\bf b}$ 
and \eqref{eq:id_scurl_sdiv} this yields 
\begin{align*}
(B_1^{\qq})_{\LL^2}^*(\gg_1\times \nn_ {\qq})
= J_{\qq}\cdot\bigg(\nn_{\qq} \Re\Big\{&
-\paren{\tfrac{\kappae^2}{\mue}-\tfrac{\kappai^2}{\mui}} 
\Big(\nn_{\qq}\times\gg_1\times\nn_{\qq}\Big)\cdot \overline{\uu^{(1)}_{\qq}} \\
&+ \left(\tfrac{1}{\mue}\!-\!\tfrac{1}{\mui}\right)
\Div_{\Gamma_{\qq}}\gg_1\cdot\Div_{\Gamma_{\qq}}\overline{\uu^{(1)}_{\qq}}\Big\}\bigg)\circ \qq
\,.
\end{align*}
Together with the analogous formula for $(B_2^{\qq})_{\LL^2}^*\gg_2$ and parts 1 and 2 we obtain the assertion.
%where $B_{k}$ for $k=1,2$ are such that $\langle\adjoint{B_{k}^{\qq}}\fq,\ki\rangle_{\R,\Gammaref}=\Re\langle \fq,B_{k}^{\qq}\ki\rangle_{\Gammaref}$. Using \eqref{eqs:BC} and the integration by part formula \eqref{dualrot} we deduce (ii).
\end{proof}

\begin{remark}\label{rem:eval_rhs_der}
Recall from the transformation formalas \eqref{transformSurfaceDOP} that $(\Div_{\Gamma_{\qq}} {\bf v})\circ \qq 
= \frac{1}{J_{\qq}}\Div_{\S^2}(\Pq{\bf v})$ and $\Pq\Rot_{\Gamma_{\qq}}v = \Rot_{\S^2} (v\circ \qq)$. 
As both $\Div_{\S^2}$ and $\Rot_{\S^2}$ are diagonal with respect to the chosen bases of spherical
harmonics and vector spherical harmonics, the implementation of the formulas in Remark \ref{remark} and
Proposition \ref{prop:adjoint} is straightforward using our discretization.
\end{remark}

Using \cite[Corollay 4]{HohageHarbrecht} we obtain that 
$F_{\rm star}'[r]^*\hh = j_{\mathcal{X}_{\rm star}\hookrightarrow L^2} r^2 \Re\{\dots\}\circ \qq$
where the expression in the curly brackets coincides with that in Proposition 
\ref{prop:adjoint}.

\section{Implementation of the Newton method}
Let us summarize the numerical implementation of the $N$th regularized Newton step for the operator equation
$F(\qq)= \EE^{\infty}_{\delta}$ (see \ref{eq:IP2}):
\begin{enumerate}
\item\label{it:first_step} For the parametrization $\qq_N^{\delta}:\Gamma_{\rm ref}\to \R^3$ of the current reconstruction
$\Gamma_N^{\delta}:=\qq_N^{\delta}(\Gamma_{\rm ref})$ of the interface, evaluate the forward operator $F$ by solving
the discretized approximation $\mathbf{K}_{\rm DM}\mathbf{u}^{(k)} = 2\mathbf{u}^{{\rm inc},k}$ of the
integral equation \eqref{eq:directmethod} of the direct method for all incident waves $k=1,\dots,m$ using an LU decomposition
of the matrix $\mathbf{K}_{\rm DM}$. Save the Fourier coefficients of $\mathbf{u}^{(k)}$ of the total exterior fields
$(\nn\times (\EE^{\mathrm{s},k}+\EE^{\mathrm{inc},k}), \nn\times \Rot(\EE^{\mathrm{s},k}+\EE^{\mathrm{inc},k}))^{\top}$
on $\Gamma_N$. Finally compute the discrete far field patterns $\mathbf{E}^{\infty,k} = \mathbf{G}\mathbf{u}^{(k)}$ for the
$k$th incident wave and the interface $\Gamma_N^{\delta}$.  
\item\label{it:second_step} Now $F'[\qq_N^{\delta}]\ki$ can be evaluated for any $\ki$
by solving discretized versions $\mathbf{K}_{\rm IM} (\mathbf{m}^{(k)}, \mathbf{j}^{(k)})^{\top} = 
2(\mathbf{g}^{(k) '}_{\qq_N^{\delta},\ki},\mathbf{f}^{(k) '}_{\qq_N^{\delta},\ki})^{\top}$ of the
integral equation \eqref{eq:indirectmethod} for $k=1,\dots,m$. The right hand sides can easily be evaluated using the
solutions $\mathbf{u}^{(k)}$ from point \ref{it:first_step} (see Remark \ref{rem:eval_rhs_der}). For the inversion of the matrix 
$\mathbf{K}_{\rm IM}$ the LU-decomposition of $\mathbf{K}_{\rm DM}$ can be reused (see Remark \ref{rem:transposed}). 
Finally, $F'[\qq_N^{\delta}]\ki$ is approximated by the concatination of the vectors $\mathbf{G} (\mathbf{j}^{(k)}, \mathbf{m}^{(k)})^{\top}$
for $k=1,\dots, m$.\\ [0.5ex]
Similarly, to compute $F'[\qq_N^{\delta}]^*\hh$ with $\hh=(\hh^{(1)},\dots,\hh^{(m)})^{\top}$,
we compute traces of the total fields $\EE_{\qq,\overline{\hh^{(j)}}}$ for Herglotz incident
fields with kernels $\overline{\hh^{(j)}}$ by evaluating 
$2\mathbf{K}_{\rm IM}^{\top}\mathbf{G}^{\top} \overline{\mathbf{h}^{(j)}}$. 
Then we use the formula in 
Proposition \ref{prop:adjoint} and some up the results for each $j$ to obtain 
$F'[\qq_N^{\delta}]^*\hh$. 
\item Compute the next iterate $\qq_{N+1}^{\delta}$ by minimizing the quadratic Tikhonov functional \eqref{eq:IRGNM} 
(or solving the equivalent linear equation \eqref{eq:IRGNMlineq}) by the conjugate gradient method. 
In each CG step $F'[\qq_N^{\delta}]$ and $F'[\qq_N^{\delta}]^*$ are applied to some vectors as described in point \ref{it:second_step}.
\end{enumerate}

In the CG algorithm we only compute $L^2$ adjoint $F'[\qq_N^{\delta}]^*_{\LL^2}$ and 
evaluate norms in $\mathcal{X}=H^s(\S^2)$ using Proposition \ref{prop:sobo_S2} and norms
in $\mathcal{Y} = \LL^2_{\rm t}(\S^2)^m$ using a quadrature formula. 

Using the discrepancy principle the Newton iteration is stopped at the first index 
$N$ for which 
\[
\|F(\qq_N^{\delta})-\EE^{\infty}_{\delta}\|\leq \tau \delta
\]
where the constant is chosen as $\tau=4$. 

\appendix
\setcounter{equation}{0}  
\renewcommand{\thesection}{A} 

\section*{Appendix A: surface differential operators}\label{app:surface_diffop}
First we briefly recall the definitions and some properties of surface differential operators 
following \cite{Nedelec}. Assuming that  $\Gamma$ admits an atlas $(\Gamma_{i},\mathcal{O}_{i},\psi_{i})_{1\leq i\leq p}$, where $(\Gamma_{i})_{1\leq i\leq p}$ is a covering of open subset of $\Gamma$ and for $i=1,\hdots,p$, the function $\psi_{i}$ is a diffeomorphism (of class $\mathscr{C}^{1}$ at least) such that  $\psi_{i}^{-1}(\Gamma_{i})=\mathcal{O}_{i}\subset\R^2$, then when $\xb\in\Gamma_{i}$ we can write $\xb=\psi_{i}(\xi^x_{1},\xi^x_{2})$ where $(\xi^x_{1},\xi^x_{2})\in\mathcal{O}_{i}$. The tangent plane to $\Gamma$ at $\xb$ is generated by the vectors $$\ee_{1}(\xb)=\frac{\partial \psi_{i}}{\partial\xi_{1}}(\xi^\xb_{1},\xi^\xb_{2})\text{ and }\ee_{2}(\xb)=\frac{\partial \psi_{i}}{\partial\xi_{2}}(\xi^\xb_{1},\xi^\xb_{2}).$$ The unit outer normal vector to $\Gamma$ and the surface area element are given by$$\nn=\frac{\ee_{1}\times \ee_{2}}{|\ee_{1}\times \ee_{2}|}\quad \text{ and }\quad ds(\yb)=|\ee_{1}(\yb)\times \ee_{2}(\yb)|d\xi_{1}d\xi_{2}=J_{\psi_{i}}(\yb)\,d\xi_{1}d\xi_{2},$$ where $J_{\psi_{i}}$ denotes the determinant of the Jacobian matrix of  $\psi_{i}:\mathcal{O}_{i}\mapsto\Gamma_{i}$. The cotangent plane to $\Gamma$ at $\xb$ is generated by the vectors
$$\ee^1(\xb)=\frac{\ee_{2}(\xb)\times\nn(\xb)}{J_{\psi_{i}}(\xb)} \text{ and }\ee^2(\xb)=\frac{\nn(\xb)\times \ee_{1}(\xb)}{J_{\psi_{i}}(\xb)}.$$
For $i=1,2$, we have that $\ee_{i}\cdot \ee^j=\delta_{i}^j$ where $\delta_{i}^j$ represents the Kronecker symbol.

The tangential gradient and the tangential vector curl of any scalar function $u\in\mathscr{C}^1(\Gamma,\C)$ 
are defined for $\xb=\psi_{i}(\xi_{1}^\xb,\xi_{2}^\xb)\in\Gamma$ by 
\begin{align}\label{G}
\grad_{\Gamma}u(\xb)&=\frac{\partial (u\circ\psi_{i})}{\partial \xi_{1}}\circ\psi_{i}^{-1}(\xb)\,\ee^{1}(\xb)+\frac{\partial (u\circ\psi_{i})}{\partial \xi_{2}}\circ\psi_{i}^{-1}(\xb)\,\ee^2(\xb),\\
\label{RR}
\Rot_{\Gamma}u(\xb)&=\frac{1}{J_{\psi_{i}}(\xb)}\left(\frac{\partial (u\circ\psi_{i})}{\partial \xi_{2}}\circ\psi_{i}^{-1}(\xb)\,\ee_{1}(\xb)
-\frac{\partial (u\circ\psi_{i})}{\partial \xi_{1}}\circ\psi_{i}^{-1}(\xb)\,\ee_{2}(\xb)\right)\,.
\end{align}
such that $\grad_{\Gamma}u=(\grad \tilde{u})|_{\Gamma}$ and $\Rot_{\Gamma}u = \Rot(\tilde{u}\tilde{\nn})|_{\Gamma}$ 
for any smooth extension $\tilde{u}$ of $u$ to a neighborhood of $\Gamma$ and a smooth extension 
$\tilde{\nn}$ of $\nn$ as gradient of a distance function.
%Note that $\Rot_{\Gamma}u = \grad_{\Gamma} u$
% For a vector function $\vv\in\mathscr{C}^1(\Gamma,\C^3)$, we denote by 
% $[\vnabla_{\Gamma}\vv]$ the matrix whose the $j$-th column is the tangential gradient of the $j$-th component of $\vv$ and we set $[\D_{\Gamma}\vv]=\transposee{[\vnabla_{\Gamma} \vv]}$.
Moreover define the surface divergence of any vector function $\vv=v^1\ee_{1}+v^2\ee_{2}\in\mathscr{C}^1(\Gamma,\C^3)$ in the tangent plane to $\Gamma$ and the surface scalar curl of any vector function $\ww=w_{1}\ee^{1}+w_{2}\ee^{2}\in\mathscr{C}^1(\Gamma,\C^3)$ in the cotangent plane to $\Gamma$ 
or $\xb=\psi_{i}(\xi_{1}^\xb,\xi_{2}^\xb)\in\Gamma$  by 
\begin{align}\label{D}
\Div_{\Gamma}\vv(\xb)&=\frac{1}{J_{\psi_{i}}(\xb)}\left(\frac{\partial (J_{\psi_{i}}v^1)\circ\psi_{i}}{\partial\xi_{1}}+\frac{\partial (J_{\psi_{i}}v^2)\circ\psi_{i}}{\partial\xi_{2}}\right)\circ\psi_{i}^{-1}(\xb)\,,\\
\label{R}
\rot_{\Gamma}\ww(\xb)&=\frac{1}{J_{\psi_{i}}(\xb)}\left(\frac{\partial (w_{2}\circ\psi_{i})}{\partial\xi_{1}}-\frac{\partial (w_{1}\circ\psi_{i})}{\partial\xi_{2}}\right)\circ\psi_{i}^{-1}(\xb)\,.
\end{align}
These definitions are independent of the choice of the coordinate system, and the identities
\begin{align}
\label{eq:normal_curl}
&\nn\cdot(\Rot \EE)|_{\Gamma} = \rot_{\Gamma}(\nn\times \EE\times \nn)\,,\\
\label{eq:n_cross_diffop}
&\Rot_{\Gamma} u = (\grad_{\Gamma}u)\times \nn, && \rot_{\Gamma}(\ww) = \Div_{\Gamma}(\ww\times \nn)\,, \\
\label{eq:chain}
&\rot_{\Gamma}\grad_{\Gamma} u=0, &&\Div_{\Gamma}\Rot_{\Gamma} u=0 
\end{align}
hold true for $u$ and $\ww$ and any smooth vector function $\EE$ defined on a neighborhood of $\Gamma$.
%Moreover, $\rot_{\Gamma}$ and
%$\Rot_{\Gamma}$ are defined such that for any continuosly
%differentiable vector function $\EE$ defined in a neighborhood of $\Gamma$ we have
%\begin{equation}
%(\Rot\EE)|_{\Gamma}  = \nn\rot_{\Gamma}(\nn\times\EE|_{\Gamma}\times \nn) + \Rot_{\Gamma}(\nn\cdot \EE|_{\Gamma})\,.
%\end{equation}
By density arguments, the surface differential operators can be extended to  
Sobolev spaces. For $s\in\R$,  $\jj\in\HH_{\tang}^{s+1}(\Gamma)$ and $\varphi\in H^{-s}(\Gamma)$ we have the dualities
\begin{align}\label{dualgrad}
&\int_{\Gamma}(\Div_{\Gamma}\jj)\cdot\varphi\,ds=-\int_{\Gamma}\jj\cdot\grad_{\Gamma}\varphi\,ds\,,\\
\label{dualrot}
&\int_{\Gamma}(\rot_{\Gamma}\jj)\cdot\varphi\,ds=\int_{\Gamma}\jj\cdot\Rot_{\Gamma}\varphi\,ds\,.
\end{align}

%For non tangential densities $\jj$ %******NEEDED?************ 
%we set 
%$$\Div_{\Gamma}\jj=\Div_{\Gamma}\big((\nn\times\jj)\times\nn\big)+2(\jj\cdot\nn){\rm H}_{\Gamma},$$
%where ${\rm H}_{\Gamma}=\tfrac{1}{2}\Tr([\grad_{\Gamma}\nn])$ is the mean curvature and 
%$$\rot_{\Gamma}\jj=\rot_{\Gamma}\big((\nn\times\jj)\times\nn\big).$$

\renewcommand{\thesection}{B} 
\section*{Appendix B: spherical harmonics and Sobolev spaces on $\S^2$}\label{app:spherical_harmonics}
In this appendix we recall the characterizations of Sobolev spaces on $\S^2$ by scalar and vector
spherical harmonics following \cite{Nedelec}. 
For $l\in\N$ and $0\leq j\leq l$, let $P_{l}^{j}$ denote the $j$-th associated Legendre function of order $l$.
Using the notation \eqref{eq:spherical_coo}, the spherical harmonics defined by 
$$Y_{l,j}(\xhat)=(-1)^{\frac{|j|+j}{2}}\sqrt{\frac{2l+1}{4\pi}\frac{(l-|j|!)}{(l+|j|!)}}P_{l}^{|j|}(\cos\theta)e^{ij\phi}$$for $j=-l,\hdots,l$ and $l=0,1,2,\hdots$. 

\begin{proposition}\label{prop:sobo_S2}
$\{Y_{l,j}:l,j\in \mathbb{Z},l\geq 0, |j|\leq l\}$
is a complete orthonormal system in $L^2(\S^2)$. The complex Hilbert spaces $H^s(\S^2)$ 
for $s\in\R$ can be characterized by 
\begin{equation*}\label{Hs}H^s(\S^2)=\left\{q=\sum_{l=0}^{\infty}\sum_{j=-l}^lc_{l,j}Y_{l,j};\;c_{l,j}\in \C \text{ and }\;\sum_{l=1}^{\infty}\sum_{j=-l}^l(1+l^2)^s|c_{l,j}|^2<+\infty\right\},\end{equation*}
with (equivalent) norm 
$||q||^2_{H^s}=\sum\limits_{l=1}^{\infty}\sum\limits_{j=-l}^l(1+l^2)^s|c_{l,j}|^2=\sum_{l=1}^{\infty}\sum_{j=-l}^l(1+l^2)^s\left|\int_{\S^2}q\cdot \overline{Y_{l,j}}ds\right|^2.$
A function $q\in H^s(\S^2,\C)$ is real valued if and only if $c_{l,j}=\overline{c_{l,-j}}$ 
for all  $l=0,1,\hdots$ and  $j=-l,\hdots,l$.
\end{proposition}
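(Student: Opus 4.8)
The statement splits into three essentially independent claims — orthonormality and completeness of the family $\{Y_{l,j}\}$ in $L^2(\S^2)$, the series characterisation of $H^s(\S^2)$, and the reality criterion — and the plan is to reduce each to classical facts about associated Legendre functions and about the Laplace--Beltrami operator $\Delta_{\S^2}:=\Div_{\S^2}\grad_{\S^2}$ on the closed manifold $\S^2$, in the spirit of \cite{Nedelec, ColtonKress}. For orthonormality I would separate variables: with $\xhat=\psi(\theta,\phi)$ as in \eqref{eq:spherical_coo} and $ds=\sin\theta\,d\theta\,d\phi$, the integral over $\phi$ yields $\int_0^{2\pi}e^{i(j-j')\phi}\,d\phi=2\pi\delta_{jj'}$, and for $j=j'$ the remaining integral over $\theta$ is evaluated by $\int_{-1}^{1}P_l^{|j|}(t)P_{l'}^{|j|}(t)\,dt=\frac{2}{2l+1}\frac{(l+|j|)!}{(l-|j|)!}\delta_{ll'}$; the normalising constants in the definition of $Y_{l,j}$ are exactly those turning the product into $\delta_{ll'}\delta_{jj'}$. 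For completeness I would use that $\operatorname{span}\{Y_{l,j}:|j|\le l\}$ coincides with the eigenspace of $\Delta_{\S^2}$ for the eigenvalue $-l(l+1)$, i.e.\ with the restrictions to $\S^2$ of harmonic homogeneous polynomials of degree $l$; since every polynomial in three variables restricted to $\S^2$ reduces modulo $|\xhat|^2-1$ to a sum of such harmonic polynomials, the span of all $Y_{l,j}$ contains the restrictions of all polynomials, which are uniformly dense in $C(\S^2)$ by Stone--Weierstrass and hence dense in $L^2(\S^2)$.

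For the Sobolev characterisation the key observation is that $\Id-\Delta_{\S^2}$ is a positive self-adjoint second order elliptic operator on $\S^2$ that is diagonalised by the orthonormal basis $\{Y_{l,j}\}$ with eigenvalues $1+l(l+1)$. Consequently the domain of $(\Id-\Delta_{\S^2})^{s/2}$ for $s\ge 0$, and its dual for $s<0$, equals $\{q:\sum_{l,j}(1+l(l+1))^s|c_{l,j}|^2<\infty\}$ with norm $\big(\sum_{l,j}(1+l(l+1))^s|c_{l,j}|^2\big)^{1/2}$. I would then invoke the classical identification of this space, with an equivalent norm, with the space $H^s(\S^2)$ defined via local charts and a partition of unity: for integer $s\ge 0$ this may be checked directly by counting tangential derivatives, and for general real $s$ it follows by interpolation combined with elliptic regularity on the closed manifold $\S^2$ (see \cite{Nedelec}). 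Finally, since $1+l^2$ and $1+l(l+1)$ are comparable uniformly in $l$, replacing the weight $(1+l(l+1))^s$ by $(1+l^2)^s$ changes the norm only by a fixed equivalence constant, which gives the stated formula; the alternative expression with $\int_{\S^2}q\,\overline{Y_{l,j}}\,ds$ in place of $c_{l,j}$ is immediate from orthonormality, which yields $c_{l,j}=\int_{\S^2}q\,\overline{Y_{l,j}}\,ds$.

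For the reality criterion I would observe that, since $P_l^{|j|}$ is real valued and the normalising constant is invariant under $j\mapsto -j$, the explicit formula shows that $\overline{Y_{l,j}}$ equals $Y_{l,-j}$ up to a sign depending only on $j$. Expanding $q=\sum_{l,j}c_{l,j}Y_{l,j}$, conjugating and re-indexing $j\mapsto -j$ then expresses $\overline q$ again in the basis $\{Y_{l,j}\}$, and comparing its coefficients with those of $q$ — which is legitimate by uniqueness of the spherical harmonic expansion — characterises the real-valued elements of $H^s(\S^2,\C)$ by the stated symmetry relation between $c_{l,j}$ and $\overline{c_{l,-j}}$.

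I expect the only genuinely non-routine point to be, for non-integer $s$, the equivalence between the intrinsic chart-based definition of $H^s(\S^2)$ and the spectral definition through $\Id-\Delta_{\S^2}$; the rest is a direct computation or a standard density argument. As this equivalence is a classical fact for compact Riemannian manifolds without boundary, in the write-up I would quote it from \cite{Nedelec} rather than reprove it.
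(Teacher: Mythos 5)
Your overall plan is sound and, for the first two assertions, standard: orthonormality via the $\phi$-integral and the Legendre orthogonality relation, completeness via restrictions of harmonic homogeneous polynomials plus Stone--Weierstrass, and the $H^s$ characterisation via the spectral decomposition of $\Id-\Delta_{\S^2}$ (eigenvalues $1+l(l+1)$, uniformly comparable to $1+l^2$) combined with the classical equivalence of the chart-based and spectral definitions on a closed manifold, which you rightly quote rather than reprove. Note that the paper itself offers no proof of this proposition --- it is recalled from \cite{Nedelec} --- so your reduction to these classical facts is exactly the intended route, and the identification $c_{l,j}=\int_{\S^2}q\,\overline{Y_{l,j}}\,ds$ from orthonormality is all that is needed for the second form of the norm.

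The one step that does not go through as written is the reality criterion. You say that $\overline{Y_{l,j}}$ equals $Y_{l,-j}$ ``up to a sign depending only on $j$'' and then conclude the stated relation $c_{l,j}=\overline{c_{l,-j}}$; but the sign is not innocent. With the normalisation used in the paper, the phase $(-1)^{(|j|+j)/2}$ equals $(-1)^{j}$ for $j\geq 0$ and $1$ for $j<0$, hence $\overline{Y_{l,j}}=(-1)^{j}\,Y_{l,-j}$. Conjugating $q=\sum_{l,j}c_{l,j}Y_{l,j}$ and re-indexing $j\mapsto -j$ gives $\overline{q}=\sum_{l,j}(-1)^{j}\,\overline{c_{l,-j}}\,Y_{l,j}$, so by uniqueness of the expansion $q$ is real valued if and only if $c_{l,j}=(-1)^{j}\,\overline{c_{l,-j}}$, not $c_{l,j}=\overline{c_{l,-j}}$. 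The sign-free symmetry holds verbatim only for a phase convention that is even in $j$ (for instance the convention of \cite{ColtonKress}, where the factor $(-1)^{(|j|+j)/2}$ is absent and $\overline{Y_{l,j}}=Y_{l,-j}$ exactly). So you must either carry the sign through and state the criterion as $c_{l,j}=(-1)^{j}\overline{c_{l,-j}}$ for the paper's $Y_{l,j}$, or observe explicitly that the criterion as printed presupposes the sign-free conjugation symmetry of the basis; as it stands, your last sentence asserts a conclusion that your own computation does not deliver.
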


The tangential gradient of the spherical harmonics  is given by 
\begin{align*}
\grad_{\S^2}Y_{l,j}(\xhat)=\begin{cases}
(-1)^{\frac{|j|+j}{2}}\!\!\sqrt{\frac{2l+1}{4\pi}\frac{(l-|j|!)}{(l+|j|!)}}
\left(\!\!\frac{\partial P_{l}^{|j|}(\cos\theta)}{\partial \theta}e^{ij\phi}\ee_{\theta}
\!+\!ij\frac{P_{l}^{|j|}(\cos\theta)}{\sin\theta}e^{ij\phi}\ee_{\phi}\!\!\right),
& \sin\theta\not=0,\\
\sqrt{l(l+1)\frac{2l+1}{4\pi}}\left(\frac{(\cos\theta)^l}{2}\ee_{\theta}+ij\frac{(\cos\theta)^{l+1}}{2}\ee_{\phi}\right),&\sin\theta\!=\!0, |j|\!=\!1,\\
\transposee{(0,0,0)},&\sin\theta\!=\!0, |j|\!\not=\!1
\end{cases}
\end{align*}
% (i) if ($\sin\theta\not=0$)
%$%$\grad_{\S^2}Y_{l,j}(\xhat)=(-1)^{\frac{|j|+j}{2}}\sqrt{\frac{2l+1}{4\pi}\frac{(l-|j|!)}{(l+|j|!)}}\left(\frac{\partial P_{l}^{|j|}(\cos\theta)}{\partial \theta}e^{ij\phi}\ee_{\theta}+ij\frac{P_{l}^{|j|}(\cos\theta)}{\sin\theta}e^{ij\phi}\ee_{\phi}\right),$$
for $l\in\N^*$ and $j\in\N$ with $|j|\leq l$ with
\begin{align*} 
\frac{\partial P_{l}^{|j|}(\cos\theta)}{\partial \theta}
=\begin{cases}
-(l+|j|)(l-|j|+1)P_{l}^{|j|-1}(\cos\theta)-|j|\frac{\cos\theta}{\sin\theta}P_{l}^{|j|}(\cos\theta),&(|j|\not=0),\\
P_{l}^1(\cos\theta),&\text{ otherwise}.
\end{cases}
\end{align*}
% (ii) otherwise
%  $$\left\{\begin{array}{l}\grad_{\S^2}Y_{l,j}(\xhat)=\transposee{(0,0,0)}\quad\text{ if }(|j|\not=1),\vspace{2mm}\\\displaystyle{\grad_{\S^2}Y_{l,j}(\xhat)=\sqrt{l(l+1)\frac{2l+1}{4\pi}}\left(\frac{(\cos\theta)^l}{2}\ee_{\theta}+ij\frac{(\cos\theta)^{l+1}}{2}\ee_{\phi}\right)}\quad\text{ otherwise}.\end{array}\right.$$
The tangential vector spherical harmonics are defined by 
\[
\Y^{(1)}_{l,j}=\frac{1}{\sqrt{l(l+1)}}\grad_{\S^2}Y_{l,j} \text{ and }\Y_{l,j}^{(2)}=\frac{1}{\sqrt{l(l+1)}}\Rot_{\S^2}Y_{l,j}
\]
for $j=-l,\hdots,l$ and $l=1,2,\hdots$ 
\begin{proposition}
The vector spherical harmonics form a complete orthonormal system in $\LL^2_{\tang}(\S^2)$. 
The complex Hilbert spaces $\HH^{-1/2}_{\Div}(\S^2)$ can be chararcterized by
\begin{align*}
\HH^{-\frac{1}{2}}_{\Div}(\S^2)&=\displaystyle{\left\{\sum_{l=1}^{\infty}\sum_{j=-l}^l\alpha_{l,j}\Y^{(1)}_{l,j}+ \beta_{l,j}\Y^{(2)}_{l,j};%\alpha_{l,j},\beta_{l,j}\in \C,  
%\text{and}\;\hdots  \right.\\&&\hspace{2cm}\hdots\;\displaystyle{\left.
\sum_{l=1}^{\infty}\sum_{j=-l}^l(l(l+1))^{\frac{1}{2}}|\alpha_{l,j}|^2
+\frac{|\beta_{l,j}|^2}{(l(l+1))^{\frac{1}{2}}}<\infty\right\}}
\end{align*}
with (equivalent) norm %$||\cdot||_{H^{-\frac{1}{2}}_{\Div}}$ defined by
$||\jj||^2_{H^{-\frac{1}{2}}_{\Div}}=\sum\limits_{l=1}^{\infty}\sum\limits_{j=-l}^l(l(l+1))^{\frac{1}{2}}\left|\int_{\S^2}\jj\cdot \overline{\Y^{(1)}_{l,j}}ds\right|^2+(l(l+1))^{-\frac{1}{2}}\left|\int_{\S^2}\jj\cdot \overline{\Y^{(2)}_{l,j}}ds\right|^2.$
\end{proposition}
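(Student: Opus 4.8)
The plan is to reduce everything to the corresponding statements for the scalar spherical harmonics in Proposition \ref{prop:sobo_S2}, combined with the identities for the surface operators collected in Appendix A and the eigenvalue relation $\Div_{\S^2}\grad_{\S^2}Y_{l,j}=-l(l+1)Y_{l,j}$ on $\S^2$. For orthonormality I would first treat the $\Y^{(1)}$ fields by integrating by parts via \eqref{dualgrad}:
\[
\int_{\S^2}\grad_{\S^2}Y_{l,j}\cdot\overline{\grad_{\S^2}Y_{l',j'}}\,ds
=-\int_{\S^2}(\Div_{\S^2}\grad_{\S^2}Y_{l,j})\,\overline{Y_{l',j'}}\,ds
=l(l+1)\,\delta_{ll'}\delta_{jj'},
\]
so that the normalization factor $1/\sqrt{l(l+1)}$ yields an orthonormal family. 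Since $\Rot_{\S^2}u=\grad_{\S^2}u\times\nn$ by \eqref{eq:n_cross_diffop} and the pointwise map $\vv\mapsto\vv\times\nn$ is an isometric rotation of each tangent plane, the identical computation gives orthonormality of the $\Y^{(2)}_{l,j}$. The mixed products vanish because \eqref{dualrot} and \eqref{eq:chain} give $\int_{\S^2}\grad_{\S^2}Y_{l,j}\cdot\overline{\Rot_{\S^2}Y_{l',j'}}\,ds=\int_{\S^2}\rot_{\S^2}(\grad_{\S^2}Y_{l,j})\,\overline{Y_{l',j'}}\,ds=0$.

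Next I would prove completeness by showing that any $\jj\in\LL^2_{\tang}(\S^2)$ orthogonal to all $\Y^{(i)}_{l,j}$ vanishes. By \eqref{dualgrad}, the conditions $\int_{\S^2}\jj\cdot\overline{\Y^{(1)}_{l,j}}\,ds=0$ for $l\geq1$ force all scalar Fourier coefficients of $\Div_{\S^2}\jj$ with $l\geq1$ to vanish; the $l=0$ coefficient vanishes automatically because the integral of a surface divergence over the closed surface $\S^2$ is zero, whence $\Div_{\S^2}\jj=0$. Likewise \eqref{dualrot} turns $\int_{\S^2}\jj\cdot\overline{\Y^{(2)}_{l,j}}\,ds=0$ into $\rot_{\S^2}\jj=0$. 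Writing $\jj=\grad_{\S^2}p+\Rot_{\S^2}q$ through the Hodge decomposition \eqref{eq:Hodge_decomp} on the simply connected surface $\S^2$, the two vanishing conditions force $p$ and $q$ to be constant, so $\jj=0$; this establishes the complete orthonormal system.

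For the characterization of $\HH^{-1/2}_{\Div}(\S^2)$ I would expand $\jj=\sum_{l,j}\alpha_{l,j}\Y^{(1)}_{l,j}+\beta_{l,j}\Y^{(2)}_{l,j}$ with $\alpha_{l,j}=\int_{\S^2}\jj\cdot\overline{\Y^{(1)}_{l,j}}\,ds$ and $\beta_{l,j}=\int_{\S^2}\jj\cdot\overline{\Y^{(2)}_{l,j}}\,ds$, and assemble two ingredients. The first is the characterization of the tangential Sobolev norm from \cite{Nedelec}, namely $\|\jj\|^2_{\HH^{-1/2}(\S^2)}\simeq\sum_{l,j}(l(l+1))^{-1/2}(|\alpha_{l,j}|^2+|\beta_{l,j}|^2)$, reflecting that the vector spherical harmonics are eigenfunctions of the tangential Hodge Laplacian with eigenvalue $l(l+1)$. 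The second is the divergence computation $\Div_{\S^2}\Y^{(1)}_{l,j}=-\sqrt{l(l+1)}\,Y_{l,j}$ and $\Div_{\S^2}\Y^{(2)}_{l,j}=0$, which follows from $\Div_{\S^2}\grad_{\S^2}Y_{l,j}=-l(l+1)Y_{l,j}$ and \eqref{eq:chain}. Hence $\Div_{\S^2}\jj=-\sum_{l,j}\sqrt{l(l+1)}\,\alpha_{l,j}Y_{l,j}$, and Proposition \ref{prop:sobo_S2} gives $\|\Div_{\S^2}\jj\|^2_{H^{-1/2}}\simeq\sum_{l,j}(l(l+1))^{1/2}|\alpha_{l,j}|^2$. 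Adding the two contributions and using $(l(l+1))^{-1/2}+(l(l+1))^{1/2}\simeq(l(l+1))^{1/2}$ for $l\geq1$ produces exactly the claimed equivalent norm.

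The main obstacle is the first ingredient of the last step: justifying the equivalence of the fractional tangential norm $\|\cdot\|_{\HH^{-1/2}(\S^2)}$ with the coefficient sum weighted by $(l(l+1))^{-1/2}$. For integer orders this is immediate from the eigenfunction property, but at the fractional order $s=-1/2$ it requires either an interpolation argument or the spectral definition of the tangential Sobolev scale, as carried out in \cite{Nedelec}. The remaining parts, namely orthonormality, the divergence formulas, and the final assembly of the norm, are routine.
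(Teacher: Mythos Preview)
The paper does not actually prove this proposition; it is stated in Appendix~B as a recall from \cite{Nedelec} without argument. Your sketch is correct and is essentially the standard route taken in that reference: orthonormality via \eqref{dualgrad}, \eqref{dualrot} and the Laplace--Beltrami eigenvalue relation $\Div_{\S^2}\grad_{\S^2}Y_{l,j}=-l(l+1)Y_{l,j}$; completeness via the Hodge decomposition on the simply connected sphere; and the norm characterization by combining the spectral description of $\HH^{-1/2}_{\tang}(\S^2)$ with the diagonal action of $\Div_{\S^2}$ on the basis. You have also correctly isolated the only step that is not elementary, namely the equivalence of the fractional tangential norm with the weighted coefficient sum, which indeed requires either the spectral definition of the tangential Sobolev scale or an interpolation argument as in \cite{Nedelec}.

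One small remark: the decomposition you cite, \eqref{eq:Hodge_decomp}, is stated in the paper for $\HH^{-1/2}_{\Div}(\Gamma)$, not for $\LL^2_{\tang}(\S^2)$. For the completeness argument you need the $\LL^2$ version, $\LL^2_{\tang}(\S^2)=\grad_{\S^2}H^1(\S^2)\oplus\Rot_{\S^2}H^1(\S^2)$, which is equally classical (and in fact simpler, being the smooth Hodge decomposition on the compact simply connected manifold $\S^2$); alternatively, $\Div_{\S^2}\jj=0$ and $\rot_{\S^2}\jj=0$ directly force $\jj$ to be a harmonic $1$-form, hence zero since $H^1_{\rm dR}(\S^2)=0$.
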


\paragraph*{Acknowledgment:}  We would like to thank Olha Ivanyshyn and Rainer Kress for matlab code  and for helpful discussions. 
%Financial support of DFG through CRC  755 is gratefully acknowledged. 
 %\nocite{*}
 \small
 \bibliographystyle{siam}

\bibliography{iterativemethod}

\def\cprime{$'$}
\begin{thebibliography}{10}

\bibitem{Bakushinskii}
{\sc A.~B. Bakushinski{\u\i}}, {\em On a convergence problem of the
  iterative-regularized {G}auss-{N}ewton method}, Comput. Math. Math. Phys, 32
  (1992), pp.~1503--1509.

\bibitem{BuffaHiptmairPetersdorffSchwab}
{\sc A.~Buffa, R.~Hiptmair, T.~von Petersdorff, and C.~Schwab}, {\em Boundary
  element methods for {M}axwell transmission problems in {L}ipschitz domains},
  Numer. Math., 95 (2003), pp.~459--485.

\bibitem{ColtonKress}
{\sc D.~Colton and R.~Kress}, {\em Inverse acoustic and electromagnetic
  scattering theory}, vol.~93 of Applied Mathematical Sciences,
  Springer-Verlag, Berlin, second~ed., 1998.

\bibitem{CostabelLeLouer1}
{\sc M.~Costabel and F.~Le~Lou{\"e}r}, {\em On the {K}leinman-{M}artin integral
  equation method for electromagnetic scattering by a dielectric body}, SIAM J.
  Appl. Math., 71 (2011), pp.~635--656.

\bibitem{CostabelLeLouer}
\leavevmode\vrule height 2pt depth -1.6pt width 23pt, {\em Shape derivatives of
  boundary integral operators in electromagnetic scattering. {P}art {I}: Shape
  differentiability of pseudo-homogeneous boundary integral operators},
  Integr.Equ.Oper.Theory, 72 (2012), pp.~509--535.

\bibitem{CostabelLeLouer2}
\leavevmode\vrule height 2pt depth -1.6pt width 23pt, {\em Shape derivatives of
  boundary integral operators in electromagnetic scattering. {P}art {II}:
  Application to scattering by a homogeneous dielectric obstacle},
  Integr.Equ.Oper.Theory, 73 (2012), pp.~17--48.

\bibitem{dlB}
{\sc A.~de~La~Bourdonnaye}, {\em D\'ecomposition de {$H\sp {-1/2}\sb {\rm
  div}(\Gamma)$} et nature de l'op\'erateur de {S}teklov-{P}oincar\'e du
  probl\`eme ext\'erieur de l'\'electromagn\'etisme}, C. R. Acad. Sci. Paris
  S\'er. I Math., 316 (1993), pp.~369--372.

\bibitem{GaneshGraham}
{\sc M.~Ganesh and I.~G. Graham}, {\em A high-order algorithm for obstacle
  scattering in three dimensions}, J. Comput. Phys., 198 (2004), pp.~211--242.

\bibitem{GaneshHawkins1}
{\sc M.~Ganesh and S.~C. Hawkins}, {\em A spectrally accurate algorithm for
  electromagnetic scattering in three dimensions}, Numer. Algorithms, 43
  (2006), pp.~25--60.

\bibitem{GaneshHawkins2}
\leavevmode\vrule height 2pt depth -1.6pt width 23pt, {\em An efficient surface
  integral equation method for the time-harmonic {M}axwell equations}, ANZIAM
  J., 48 (2007), pp.~C17--C33.

\bibitem{GaneshHawkins3}
\leavevmode\vrule height 2pt depth -1.6pt width 23pt, {\em A high-order
  tangential basis algorithm for electromagnetic scattering by curved
  surfaces}, J. Comput. Phys., 227 (2008), pp.~4543--4562.

\bibitem{HaddarKress}
{\sc H.~Haddar and R.~Kress}, {\em On the {F}r\'echet derivative for obstacle
  scattering with an impedance boundary condition}, SIAM J. Appl. Math., 65
  (2004), pp.~194--208 (electronic).

\bibitem{Hahner}
{\sc P.~H{\"a}hner}, {\em A uniqueness theorem for a transmission problem in
  inverse electromagnetic scattering}, Inverse Problems, 9 (1993),
  pp.~667--678.

\bibitem{HohageHarbrecht}
{\sc H.~Harbrecht and T.~Hohage}, {\em Fast methods for three-dimensional
  inverse obstacle scattering problems}, J. Integral Equations Appl., 19
  (2007), pp.~237--260.

\bibitem{Harrington}
{\sc R.~F. Harrington}, {\em Boundary integral formulations for homogeneous
  materials bodies}, J. Electromagnetics Waves and Applications, 3 (1989),
  pp.~1--15.

\bibitem{Hettlich}
{\sc F.~Hettlich}, {\em Fr\'echet derivatives in inverse obstacle scattering},
  Inverse Problems, 11 (1995), pp.~371--382.

\bibitem{HettlichErra}
\leavevmode\vrule height 2pt depth -1.6pt width 23pt, {\em Erratum: ``{F}rechet
  derivatives in inverse obstacle scattering'' [{I}nverse {P}roblems {\bf 11}
  (1995), no. 2, 371--382; {MR}1324650 (95k:35217)]}, Inverse Problems, 14
  (1998), pp.~209--210.

\bibitem{Hettlich2}
\leavevmode\vrule height 2pt depth -1.6pt width 23pt, {\em The domain
  derivative of time-harmonic electromagnetic waves at interfaces}, preprint,
  (2011).
\newblock \texttt{http://digbib.ubka.uni-karlsruhe.de/volltexte/1000022792}.

\bibitem{Hohage2}
{\sc T.~Hohage}, {\em Logarithmic convergence rates of the iteratively
  regularized {G}auss-{N}ewton method for an inverse potential and an inverse
  scattering problem}, Inverse Problems, 13 (1997), pp.~1279--1299.

\bibitem{Hohage}
\leavevmode\vrule height 2pt depth -1.6pt width 23pt, {\em Iterative Methods in
  Inverse Obstacle Scattering: Regularization Theory of Linear and Nonlinear
  Exponentially Ill-Posed Problems}, {PhD} thesis, University of Linz, 1999.

\bibitem{HohageSchormann}
{\sc T.~Hohage and C.~Schormann}, {\em A {N}ewton-type method for a
  transmission problem in inverse scattering}, Inverse Problems, 14 (1998),
  pp.~1207--1227.

\bibitem{Kirsch}
{\sc A.~Kirsch}, {\em The domain derivative and two applications in inverse
  scattering theory}, Inverse Problems, 9 (1993), pp.~81--96.

\bibitem{KleinmanMartin}
{\sc R.~E. Kleinman and P.~A. Martin}, {\em On single integral equations for
  the transmission problem of acoustics}, SIAM J. Appl. Math., 48 (1988),
  pp.~307--325.

\bibitem{Kress}
{\sc R.~Kress}, {\em Electromagnetic waves scattering : Scattering by
  obstacles}, Scattering,  (2001), pp.~191--210.
\newblock Pike, E. R. and Sabatier, P. C., eds., Academic Press, London.

\bibitem{KressPaivarinta}
{\sc R.~Kress and L.~P{\"a}iv{\"a}rinta}, {\em On the far field in obstacle
  scattering}, SIAM J. Appl. Math., 59 (1999), pp.~1413--1426 (electronic).

\bibitem{FLL}
{\sc F.~Le~Lou{\"e}r}, {\em Optimisation de formes d'antennes lentilles
  int{\'e}gr{\'e}es aux ondes millim{\'e}trique}, {PhD}thesis, Univ. Rennes 1,
  2009.
\newblock \newline \texttt{http://tel.archives-ouvertes.fr/tel-00421863/fr/}.

\bibitem{MartinOla}
{\sc P.~A. Martin and P.~Ola}, {\em Boundary integral equations for the
  scattering of electromagnetic waves by a homogeneous dielectric obstacle},
  Proc. Roy. Soc. Edinburgh Sect. A, 123 (1993), pp.~185--208.

\bibitem{Mautz}
{\sc J.~R. Mautz}, {\em A stable integral equation for electromagnetic
  scattering from homogeneous dielectric bodies}, IEEE Trans. Antennas and
  Propagation, 37 (1989), pp.~1070--1071.

\bibitem{Muller}
{\sc C.~M\"uller}, {\em Foundations of the Mathematical Theory of
  Electromagnetic Waves}, Berlin-Springer, 1969.

\bibitem{Nedelec}
{\sc J.-C. N{\'e}d{\'e}lec}, {\em Acoustic and electromagnetic equations},
  vol.~144 of Applied Mathematical Sciences, Springer-Verlag, New York, 2001.
\newblock Integral representations for harmonic problems.

\bibitem{Onaka}
{\sc S.~Onaka}, {\em Simple equations giving shapes of various convex
  polyhedra: The regular polydedra and polyhedra composed of
  crystallographically low-index planes}, Philosophical Magazine Letters, 86
  (2006), pp.~175--183.

\bibitem{Pieper}
{\sc M.~Pieper}, {\em Vector hyperinterpolation on the sphere}, J. Approx.
  Theory, 156 (2009), pp.~173--186.

\bibitem{Potthast2}
{\sc R.~Potthast}, {\em Fr\'echet differentiability of boundary integral
  operators in inverse acoustic scattering}, Inverse Problems, 10 (1994),
  pp.~431--447.

\bibitem{Potthast3}
\leavevmode\vrule height 2pt depth -1.6pt width 23pt, {\em Domain derivatives
  in electromagnetic scattering}, Math. Methods Appl. Sci., 19 (1996),
  pp.~1157--1175.

\bibitem{Potthast1}
\leavevmode\vrule height 2pt depth -1.6pt width 23pt, {\em Fr\'echet
  differentiability of the solution to the acoustic {N}eumann scattering
  problem with respect to the domain}, J. Inverse Ill-Posed Probl., 4 (1996),
  pp.~67--84.

\end{thebibliography}
   \end{document}